\begin{document}

\newtheorem{theorem}[subsection]{Theorem}
\newtheorem{proposition}[subsection]{Proposition}
\newtheorem{lemma}[subsection]{Lemma}
\newtheorem{corollary}[subsection]{Corollary}
\newtheorem{conjecture}[subsection]{Conjecture}
\newtheorem{prop}[subsection]{Proposition}
\newtheorem{defin}[subsection]{Definition}

\numberwithin{equation}{section}
\newcommand{\mr}{\ensuremath{\mathbb R}}
\newcommand{\mc}{\ensuremath{\mathbb C}}
\newcommand{\dif}{\mathrm{d}}
\newcommand{\intz}{\mathbb{Z}}
\newcommand{\ratq}{\mathbb{Q}}
\newcommand{\natn}{\mathbb{N}}
\newcommand{\comc}{\mathbb{C}}
\newcommand{\rear}{\mathbb{R}}
\newcommand{\prip}{\mathbb{P}}
\newcommand{\uph}{\mathbb{H}}
\newcommand{\fief}{\mathbb{F}}
\newcommand{\majorarc}{\mathfrak{M}}
\newcommand{\minorarc}{\mathfrak{m}}
\newcommand{\sings}{\mathfrak{S}}
\newcommand{\fA}{\ensuremath{\mathfrak A}}
\newcommand{\mn}{\ensuremath{\mathbb N}}
\newcommand{\mq}{\ensuremath{\mathbb Q}}
\newcommand{\half}{\tfrac{1}{2}}
\newcommand{\f}{f\times \chi}
\newcommand{\summ}{\mathop{{\sum}^{\star}}}
\newcommand{\chiq}{\chi \bmod q}
\newcommand{\chidb}{\chi \bmod db}
\newcommand{\chid}{\chi \bmod d}
\newcommand{\sym}{\text{sym}^2}
\newcommand{\hhalf}{\tfrac{1}{2}}
\newcommand{\sumstar}{\sideset{}{^*}\sum}
\newcommand{\sumprime}{\sideset{}{'}\sum}
\newcommand{\sumprimeprime}{\sideset{}{''}\sum}
\newcommand{\sumflat}{\sideset{}{^\flat}\sum}
\newcommand{\shortmod}{\ensuremath{\negthickspace \negthickspace \negthickspace \pmod}}
\newcommand{\V}{V\left(\frac{nm}{q^2}\right)}
\newcommand{\sumi}{\mathop{{\sum}^{\dagger}}}
\newcommand{\mz}{\ensuremath{\mathbb Z}}
\newcommand{\leg}[2]{\left(\frac{#1}{#2}\right)}
\newcommand{\muK}{\mu_{\omega}}
\newcommand{\thalf}{\tfrac12}
\newcommand{\lp}{\left(}
\newcommand{\rp}{\right)}
\newcommand{\Lam}{\Lambda_{[i]}}
\newcommand{\lam}{\lambda}
\newcommand{\af}{\mathfrak{a}}
\newcommand{\sw}{S_{[i]}(X,Y;\Phi,\Psi)}
\newcommand{\lz}{\left(}
\newcommand{\pz}{\right)}
\newcommand{\bfrac}[2]{\lz\frac{#1}{#2}\pz}
\newcommand{\odd}{\mathrm{\ primary}}
\newcommand{\even}{\text{ even}}
\newcommand{\res}{\mathrm{Res}}
\newcommand{\sumn}{\sumstar_{(c,1+i)=1}  w\left( \frac {N(c)}X \right)}
\newcommand{\lab}{\left|}
\newcommand{\rab}{\right|}
\newcommand{\Go}{\Gamma_{o}}
\newcommand{\Ge}{\Gamma_{e}}
\newcommand{\M}{\widehat}
\def\su#1{\sum_{\substack{#1}}}

\theoremstyle{plain}
\newtheorem{conj}{Conjecture}
\newtheorem{remark}[subsection]{Remark}

\newcommand{\pfrac}[2]{\left(\frac{#1}{#2}\right)}
\newcommand{\pmfrac}[2]{\left(\mfrac{#1}{#2}\right)}
\newcommand{\ptfrac}[2]{\left(\tfrac{#1}{#2}\right)}
\newcommand{\pMatrix}[4]{\left(\begin{matrix}#1 & #2 \\ #3 & #4\end{matrix}\right)}
\newcommand{\ppMatrix}[4]{\left(\!\pMatrix{#1}{#2}{#3}{#4}\!\right)}
\renewcommand{\pmatrix}[4]{\left(\begin{smallmatrix}#1 & #2 \\ #3 & #4\end{smallmatrix}\right)}
\def\en{{\mathbf{\,e}}_n}

\newcommand{\ppmod}[1]{\hspace{-0.15cm}\pmod{#1}}
\newcommand{\ccom}[1]{{\color{red}{Chantal: #1}} }
\newcommand{\acom}[1]{{\color{blue}{Alia: #1}} }
\newcommand{\alexcom}[1]{{\color{green}{Alex: #1}} }
\newcommand{\hcom}[1]{{\color{brown}{Hua: #1}} }

\makeatletter
\def\widebreve{\mathpalette\wide@breve}
\def\wide@breve#1#2{\sbox\z@{$#1#2$}%
     \mathop{\vbox{\m@th\ialign{##\crcr
\kern0.08em\brevefill#1{0.8\wd\z@}\crcr\noalign{\nointerlineskip}%
                    $\hss#1#2\hss$\crcr}}}\limits}
\def\brevefill#1#2{$\m@th\sbox\tw@{$#1($}%
  \hss\resizebox{#2}{\wd\tw@}{\rotatebox[origin=c]{90}{\upshape(}}\hss$}
\makeatletter

\title[Lower Bounds on High Moments of Twisted Fourier coefficients of Modular Forms]{Lower Bounds on High Moments of Twisted Fourier coefficients of Modular Forms}

%%\date{\today}
\author[P. Gao]{Peng Gao}
\address{School of Mathematical Sciences, Beihang University, Beijing 100191, China}
\email{penggao@buaa.edu.cn}

\author[L. Zhao]{Liangyi Zhao}
\address{School of Mathematics and Statistics, University of New South Wales, Sydney NSW 2052, Australia}
\email{l.zhao@unsw.edu.au}

\begin{abstract}
 For any large prime $q$, $x \leq 1$ and any real $k\geq 2$, we prove a lower bound for the following $2k$-th moment
    \begin{equation*}
    \sum_{\substack{\chi \in X_q^*}} \Big| \sum_{n\leq x} \chi(n)\lambda(n)\Big|^{2k},
\end{equation*}
  where $X_q^*$ denotes the set of primitive Dirichlet characters modulo $q$ and $\lambda(n)$ the Fourier coefficients of a fixed modular form. The bound we obtain is sharp up to a constant factor under the generalized Riemann Hypothesis.
\end{abstract}

\maketitle

\noindent {\bf Mathematics Subject Classification (2010)}: 11L40, 11M06  \newline

\noindent {\bf Keywords}: Dirichlet characters, modular $L$-functions, shifted moments, loweer bounds

\section{Introduction}\label{sec1}

It goes without saying that character sums are extremely important in number theory and their utility cannot be over-stated.  In the breakthrough work \cite{harper2020moments}, A. J. Harper determined the order of magnitude  of the low moments of Steinhaus or Rademacher random multiplicative functions.  The ideas used in \cite{harper2020moments}, together those arising from \cite{Harper}, culminated in \cite{Harper23} in showing that the low moments of Dirichlet character sums have ``better than square root cancellation". More precisely, Harper \cite{Harper23} proved that if $q$ is a prime and $0\leq k\leq 1$, then 
\begin{equation*}
     \frac{1}{\varphi(q)} \sum_{\substack{\chi \in X^*_q}} \Big| \sum_{n\leq x} \chi(n) \Big|^{2k} \ll \bigg( \frac{x}{1+(1-k)\sqrt{\log\log \min(x, q/x)} }\bigg)^k, 
\end{equation*}
  where $X_q^*$ denotes the set of primitive Dirichlet characters modulo $q$ and $\varphi(q)$ is Euler's totient function. \newline

For higher moments,  B. Szab\'o \cite{Szab} applied his result on sharp upper bounds on shifted moments of Dirichlet $L$-functionon the critical line to show under the generalized Riemann hypothesis (GRH) that for a fixed real number $k>2$ and a large integer $q$, we have for $2 \leq Y \leq q$,
\begin{align*}
%%\label{genJacobi}
  \sum_{\chi\in X_q^*}\bigg|\sum_{n\leq Y} \chi(n)\bigg|^{2k} \ll_k \varphi(q) Y^k \left(\min \left(\log Y, \log \frac {2q}{Y} \right) \right)^{(k-1)^2},
\end{align*}
  It was also shown in \cite[Theorem 1]{Szab24} that the above bounds are optimal under GRH for primes $q$. \newline

  Let $f$ be a fixed holomorphic Hecke eigenform of weight $\kappa \equiv 0 \pmod 4$ for the full modular group $SL_2 (\mathbb{Z})$. We write the Fourier expansion of $f$ at infinity as
\[
f(z) = \sum_{n=1}^{\infty} \lambda (n) n^{\frac{\kappa -1}{2}} e(nz), \quad \mbox{where} \quad e(z) = \exp (2 \pi i z).
\]
  Motivated by the result of Szab\'o in \cite{Szab}, the authors studied upper bounds for high moments of sums involving with $\lambda (n)$ twisted by $\chi(n)$ to a fixed modulus.  More precisely, for positive real numbers $k$, $x$, we set
\begin{align*}
%%\label{mainestimation}
 S_{k}(q,x;f) := &\sum_{\chi \in X^*_q} \Big | \sum_{n \leq x}\chi(n)\lambda(n)\Big |^{2k}. 
\end{align*}
In \cite[Theorem 1.5]{G&Zhao24-12}, the authors show that assuming the truth of GRH, for large $q$, any $x \leq q$ and any real number $k> 2$,
\begin{align}
\label{Skupperbounds}
 S_{k}(q,x;f) \ll \varphi(q)x^k(\log q)^{(k-1)^2}.
\end{align}

  The aim of this paper is to obtain lower estimations for $S_{k}(q,x;f) $. Our result is as follows.
\begin{theorem}
\label{lowerboundsfixedmodmean}
With the notation as above, let $q$ be a large prime number. We have, for $x \leq q^{1/2}$ and any real number $k \geq 2$,
\begin{align*}
%%\label{mainestimation}
 S_{k}(q,x;f)  \gg \varphi(q)x^k(\log q)^{(k-1)^2}.
\end{align*}
\end{theorem}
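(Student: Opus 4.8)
The plan is to prove the lower bound by the Rudnick--Soundararajan method: twist the character sum against a short ``resonator'' Dirichlet polynomial and apply Hölder's inequality. Write $A_t(\chi)=\sum_{n\le t}\lambda(n)\chi(n)$, so that $S_k(q,x;f)=\sum_{\chi\in X_q^*}|A_x(\chi)|^{2k}$, and set $z=x^{1/(4k)}$, a positive power of $q$ with $z^k<q$ and $xz^{k-1}<q$; take as resonator the shorter partial sum $\mathcal M(\chi)=A_z(\chi)$. When $k$ is an integer, Hölder's inequality applied to the characters with exponents $k$ and $k/(k-1)$ gives
\[
\sum_{\chi}|A_x(\chi)|^2|\mathcal M(\chi)|^{2(k-1)}\ \le\ \Big(\sum_{\chi}|A_x(\chi)|^{2k}\Big)^{1/k}\Big(\sum_{\chi}|\mathcal M(\chi)|^{2k}\Big)^{(k-1)/k},
\]
hence $S_k(q,x;f)\ge S_1^k/S_2^{k-1}$ with $S_1=\sum_\chi|A_x(\chi)\,\mathcal M(\chi)^{k-1}|^2$ and $S_2=\sum_\chi|\mathcal M(\chi)|^{2k}$. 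For real $k\ge2$ one runs the same argument with $\mathcal M(\chi)^{k-1}$ replaced by a genuine Dirichlet polynomial $\mathcal P(\chi)$ playing its role, so that $S_1=\sum_\chi|A_x(\chi)\mathcal P(\chi)|^2$ and $S_2=\sum_\chi|\mathcal P(\chi)|^{2k/(k-1)}$.

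The point of shortening the resonator is that $A_x\mathcal M^{k-1}$ and $\mathcal M^{k}$ are then Dirichlet polynomials supported on integers $\le xz^{k-1}<q$ and $\le z^k<q$ respectively. Expanding $S_1$ and $S_2$ by the orthogonality relation $\sum_{\chi\in X_q^*}\chi(a)\overline{\chi(b)}=\varphi(q)\,\mathbf 1_{a=b}-1$ (valid for $(ab,q)=1$, which holds here since every integer occurring is smaller than $q$), the congruence $a\equiv b\pmod q$ forces $a=b$: the diagonal is the only main term, and \emph{no off-diagonal shifted convolution has to be estimated}. This is exactly why the hypothesis $x\le q^{1/2}$ is enough. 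One is left with
\[
S_1=\varphi(q)\sum_N \widetilde\Lambda_k(N)^2+O\big((xz^{k-1})^{1-\delta}\big),\qquad S_2=\varphi(q)\sum_N \Lambda_k^{(z)}(N)^2+O\big(z^{k(1-\delta)}\big),
\]
where $\Lambda_k^{(z)}(N)=\sum_{n_1\cdots n_k=N,\,n_i\le z}\prod_i\lambda(n_i)$ and $\widetilde\Lambda_k(N)=\sum_{n_0\cdots n_{k-1}=N,\,n_0\le x,\,n_i\le z}\lambda(n_0)\prod_{i\ge1}\lambda(n_i)$, the error terms being negligible by the cusp-form bound $\sum_{n\le t}\lambda(n)\ll t^{1-\delta}$ (trivial-character contribution).

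Now everything is an arithmetic computation. For the lower bound on $S_1$ we restrict to the subfamily in which $n_0,\dots,n_{k-1}$ together with their dual variables $m_0,\dots,m_{k-1}$ (from a second copy of $\widetilde\Lambda_k$) are pairwise coprime and squarefree; after the usual divisor switch (set $g=(n_0,m_0)$, etc.) each surviving term becomes a nonnegative product $\prod\lambda(\cdot)^2$, and summing over the resulting ranges using the Rankin--Selberg asymptotics $\sum_{n\le t}\lambda(n)^2\sim c_f t$ and $\sum_{n\le t}\lambda(n)^2/n\sim c_f'\log t$ yields $S_1\gg\varphi(q)\,xz^{k-1}(\log q)^{(k-1)^2}$ (here $\log z\asymp\log q$). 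For $S_2$ we need the matching \emph{upper} bound $S_2\ll\varphi(q)\,z^{k}(\log q)^{(k-1)^2}$: for integer $k$ this is again a restricted $k$-fold divisor correlation bounded by divisor switching and Rankin--Selberg --- the same arithmetic inputs as in the proof of \eqref{Skupperbounds}, but now completely unconditional since $z$ is a short power of $q$ and no off-diagonal arises. Feeding these into Hölder,
\[
S_k(q,x;f)\ \ge\ \frac{S_1^k}{S_2^{k-1}}\ \gg\ \frac{\big(\varphi(q)\,xz^{k-1}(\log q)^{(k-1)^2}\big)^k}{\big(\varphi(q)\,z^{k}(\log q)^{(k-1)^2}\big)^{k-1}}\ =\ \varphi(q)\,x^k(\log q)^{(k-1)^2},
\]
since the powers of $\varphi(q)$ and of $z$ cancel exactly ($k-(k-1)=1$, $k(k-1)-k(k-1)=0$) and $(k-1)^2k-(k-1)^3=(k-1)^2$.

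The one genuinely delicate point is the case of non-integer $k$. There the power $\mathcal M^{k-1}$ must be replaced by a Dirichlet polynomial $\mathcal P$ --- for instance a suitably truncated product over primes, or a mollifier-type polynomial mimicking $A_z^{k-1}$ --- and the quantity $S_2=\sum_\chi|\mathcal P(\chi)|^{2k/(k-1)}$ becomes a \emph{fractional} moment of a short Dirichlet polynomial, whose sharp upper bound $\ll\varphi(q)z^{k}(\log q)^{(k-1)^2}$ cannot be recovered from the consecutive integer moments by a crude interpolation (that would lose a power of $\log q$). Establishing this bound unconditionally --- via the upper-bound machinery for moments of short character sums, adapted from \cite{Szab} and \cite{Harper23} --- is the main technical obstacle; with it, the displayed Hölder computation goes through verbatim for all real $k\ge 2$.
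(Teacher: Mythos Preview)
The substantive gap is the non-integer case, which you yourself flag as ``the main technical obstacle'' and leave open. Everything you defer to that last paragraph --- choosing a Dirichlet polynomial $\mathcal P(\chi)$ to stand in for $A_z(\chi)^{k-1}$ and then proving the sharp upper bound $\sum_\chi|\mathcal P(\chi)|^{2k/(k-1)}\ll\varphi(q)(\log y)^{k^2+1}$ --- is precisely the content of the paper. The paper's $\mathcal P$ is $R(\chi)=\sum_{|l|\le(\log y)/2}\prod_{m} R_{m,l}(\chi)$, where each $R_{m,l}$ is a truncated Taylor polynomial of $\exp\bigl((k-1)\Re D_{m,l}(\chi)\bigr)$ with $D_{m,l}$ a short Dirichlet polynomial over primes; this is a genuine short Dirichlet polynomial for every real $k$, and the extra average over the imaginary shifts $l$ is what recovers the last factor of $\log y$ in the exponent. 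The fractional-moment upper bound (Proposition~\ref{p2}) is then established by partitioning characters according to the sizes of $|\Re D_{m,l_2}(\chi)|$, replacing $R_{m,l_2}^{1/(k-1)}$ by explicit majorants $U_{m,l_2}$ (Lemma~\ref{easy}), and controlling each cell through the Steinhaus model and Lemma~\ref{eulerproduct} (Lemmas~\ref{zerocase}, \ref{nonzerocase}, Proposition~\ref{big_upper}). This \emph{is} the adaptation of \cite{Szab} and \cite{Harper23} to the Hecke-twisted setting; your outline supplies no alternative to it.

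Even for integer $k$ the sketch has soft spots. Your $S_1$ step (``restrict to the subfamily in which $n_0,\dots,n_{k-1}$ together with $m_0,\dots,m_{k-1}$ are pairwise coprime and squarefree'') cannot be literally correct: under $\prod_i n_i=\prod_j m_j$, pairwise coprimality of all $2k$ variables forces every one of them to equal $1$, so the combinatorics producing the exponent $(k-1)^2$ are absent. And your $S_2$ upper bound, equivalent by orthogonality to $\mathbb{E}\bigl|\sum_{n\le z}\lambda(n)f(n)\bigr|^{2k}\ll z^k(\log z)^{(k-1)^2}$ for Steinhaus $f$, is the right order but not a one-line consequence of Rankin--Selberg: for the Hecke-weighted sum this is already a nontrivial moment bound of the same flavour the theorem is about.
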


Theorem \ref{lowerboundsfixedmodmean} holds unconditionally. Together with \eqref{Skupperbounds}, the following Corollary on the order of magnitude of $S_{k}(q,x;f)$ is immediate.
\begin{corollary}
\label{orderofmagfixedmodmean}
With the notation as above and assuming the truth of GRH. Let $q$ be a large prime number. We have, for $x \leq q^{1/2}$ and any real number $k > 2$,
\begin{align*}
%%\label{mainestimation}
 S_{k}(q,x;f) \asymp \varphi(q)x^k(\log q)^{(k-1)^2}.
\end{align*}
\end{corollary}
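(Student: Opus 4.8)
\textbf{Proof proposal for Corollary \ref{orderofmagfixedmodmean}.}

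The plan is to combine the two complementary one-sided bounds that are already on record. The upper bound is exactly \eqref{Skupperbounds}, established in \cite[Theorem 1.5]{G&Zhao24-12} under GRH: for a large prime $q$, any $x \leq q$ (in particular for $x \leq q^{1/2}$), and any real $k > 2$, one has
\begin{equation*}
 S_{k}(q,x;f) \ll \varphi(q)x^k(\log q)^{(k-1)^2}.
\end{equation*}
Since the hypothesis of the Corollary assumes GRH and restricts to $x \leq q^{1/2}$ and $k > 2$, this bound applies verbatim; no new work is needed for the $\ll$ direction.

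For the matching lower bound, I would invoke Theorem \ref{lowerboundsfixedmodmean}, which gives unconditionally, for $q$ a large prime, $x \leq q^{1/2}$, and any real $k \geq 2$,
\begin{equation*}
 S_{k}(q,x;f) \gg \varphi(q)x^k(\log q)^{(k-1)^2}.
\end{equation*}
In particular this holds for all real $k > 2$, which is the range in the Corollary, and it holds on the nose for the same ranges of $q$ and $x$. Thus both the $\ll$ and the $\gg$ estimates are valid simultaneously under the Corollary's hypotheses, with implied constants depending only on $k$ (and on the fixed form $f$).

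Combining the two displays yields $S_{k}(q,x;f) \asymp \varphi(q)x^k(\log q)^{(k-1)^2}$ for $q$ a large prime, $x \leq q^{1/2}$, and any real $k > 2$, which is precisely the assertion of Corollary \ref{orderofmagfixedmodmean}. The only genuine mathematical content lies in the two cited results: the GRH-conditional upper bound of \cite{G&Zhao24-12} and the unconditional lower bound of Theorem \ref{lowerboundsfixedmodmean}; there is no further obstacle, as the deduction is a one-line matching of ranges. One minor point worth a sentence in the write-up is to note explicitly that the parameter ranges of the two inputs overlap exactly on $\{x \leq q^{1/2},\ k > 2,\ q \text{ a large prime}\}$, so that the two-sided bound is meaningful there; this is immediate since Theorem \ref{lowerboundsfixedmodmean}'s range $k \geq 2$ contains $k > 2$ and \eqref{Skupperbounds}'s range $x \leq q$ contains $x \leq q^{1/2}$.
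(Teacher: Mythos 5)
Your proposal is exactly the paper's reasoning: the paper states immediately before the Corollary that Theorem \ref{lowerboundsfixedmodmean} holds unconditionally and, together with \eqref{Skupperbounds}, the Corollary is immediate. Your write-up correctly matches the parameter ranges and makes the same two-line deduction.
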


  Our proof of Theorem \ref{lowerboundsfixedmodmean} follows closely the treatments in \cite{Szab24}, which also used many techniques developed in the work of Harper \cites{Harper, Harper23, harper2020moments}.

\section{Preliminaries}
\label{sec 2}

In this section, we cite some results necessary in the proof of Theorem \ref{lowerboundsfixedmodmean}.

\subsection{Cusp form $L$-functions}
\label{sec:cusp form}

    We reserve the letter $p$ for a prime number throughout in this paper.  Recall that $f$ is a fixed holomorphic Hecke eigenform $f$ of weight $\kappa \equiv 0 \pmod 4$ for the full modular group $SL_2 (\mathbb{Z})$. The associated modular $L$-function $L(s, f)$ for $\Re(s)>1$ is then defined as
\begin{align}
\label{Lphichi}
L(s, f ) &= \sum_{n=1}^{\infty} \frac{\lambda(n)}{n^s}
 = \prod_{p\nmid q} \left(1 - \frac{\lambda (p)}{p^s}  + \frac{1}{p^{2s}}\right)^{-1}=\prod_{p} \left(1 - \frac{\alpha_p }{p^s} \right)^{-1}\left(1 - \frac{\beta_p }{p^s} \right)^{-1}.
\end{align}
 By Deligne's proof \cite{D} of the Weil conjecture, we know that
\begin{align}
\label{alpha}
|\alpha_{p}|=|\beta_{p}|=1, \quad \alpha_{p}\beta_{p}=1.
\end{align}
  It follows that $\lambda(n) \in \mr$ such that $\lambda (1) =1$ and
\begin{align}
\label{lambdabound}
\begin{split}
  |\lambda(n)| \leq d(n) \ll n^{\varepsilon}.
\end{split}
\end{align}
 where $d(n)$ is the number of positive divisors $n$. \newline
 
%Here the last estimation follows from the bound (see \cite[Theorem 2.11]{MVa1}) that for any $\varepsilon>0$,
%\begin{align}
%\label{divisorbound}
%\begin{split}
 % d(n) \ll n^{\varepsilon}.
%\end{split}
%\end{align}

  The symmetric square $L$-function $L(s, \operatorname{sym}^2 f)$ of $f$ is defined for $\Re(s)>1$ by
 (see \cite[p. 137]{iwakow} and \cite[(25.73)]{iwakow})
\begin{align}
\label{Lsymexp}
\begin{split}
 L(s, \operatorname{sym}^2 f)=& \prod_p(1-\alpha^2_pp^{-s})^{-1}(1-p^{-s})^{-1}(1-\beta^2_pp^{-s})^{-1} \\
 = & \zeta(2s) \sum_{n \geq 1}\frac {\lambda(n^2)}{n^s}=\prod_{p}\left( 1-\frac {\lambda(p^2)}{p^s}+\frac {\lambda(p^2)}{p^{2s}}-\frac {1}{p^{3s}} \right)^{-1}.
\end{split}
\end{align}
  It follows from a result of G. Shimura \cite{Shimura} that $L(s, \operatorname{sym}^2 f)$ has no pole at $s=1$. Moreover, the corresponding completed symmetric square $L$-function
\begin{align*}
%%\label{Lambdafdef}
 \Lambda(s, \operatorname{sym}^2 f)=& \pi^{-3s/2}\Gamma \Big( \frac {s+1}{2} \Big)\Gamma \Big(\frac {s+\kappa-1}{2}\Big) \Gamma \Big(\frac {s+\kappa}{2}\Big) L(s, \operatorname{sym}^2 f)
\end{align*}
  is entire and satisfies the functional equation $\Lambda(s, \operatorname{sym}^2 f)=\Lambda(1-s, \operatorname{sym}^2 f)$. \newline
  
   We derive from \eqref{Lphichi} and \eqref{Lsymexp} that
\begin{align}
\label{alphalambdarel}
\begin{split}
  \alpha_p+\beta_p= &\lambda(p), \\
  \alpha^2_p+\beta^2_p=& \lambda^2(p)-2=\lambda(p^2)-1.
\end{split}
\end{align}

Thus, it follows from the above that
\begin{align}
\label{sumlambdapsquare}
\begin{split}
  \lambda^2(p)=\lambda(p^2)+1.
\end{split}
\end{align}

%%----------------------------------------------------------------------------
\subsection{Sums over primes}
\label{sec2.1}
%%----------------------------------------------------------------------------

 We include in this section some asymptotic evaluations of various sums over primes.
\begin{lemma}
\label{RS}
 Let $x \geq 2$. We have, for some constant $b_1, b_2$,
\begin{align}
\label{merten}
\sum_{p\le x} \frac{1}{p} =& \log \log x + b_1+ O\Big(\frac{1}{\log x}\Big), \; \mbox{and} \\
\label{merten1}
\sum_{p\le x} \frac{\lambda^2(p)}{p} =& \log \log x + b_2+ O\Big(\frac{1}{\log x}\Big).
\end{align}
\end{lemma}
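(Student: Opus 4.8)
\emph{Proof proposal.}
The first estimate \eqref{merten} is the classical theorem of Mertens on $\sum_{p\le x}1/p$ and needs no comment beyond a reference (e.g.\ Tenenbaum or Montgomery--Vaughan). The plan for \eqref{merten1} is to reduce it to \eqref{merten} via the Hecke relation \eqref{sumlambdapsquare}: since $\lambda^2(p)=\lambda(p^2)+1$, combining with \eqref{merten} gives
\[
\sum_{p\le x}\frac{\lambda^2(p)}{p}=\log\log x+b_1+\sum_{p\le x}\frac{\lambda(p^2)}{p}+O\Big(\frac{1}{\log x}\Big),
\]
so it suffices to show that $\sum_{p\le x}\lambda(p^2)/p$ converges to a constant $c$ with an error of size $O(1/\log x)$, after which one puts $b_2=b_1+c$.

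To prove this I would invoke the analytic properties of $L(s,\operatorname{sym}^2 f)$. Taking logarithms in the Euler product \eqref{Lsymexp} and using $|\lambda(p^2)|\le d(p^2)=3$ from \eqref{lambdabound}, one may write
\[
\log L(s,\operatorname{sym}^2 f)=\sum_p\frac{\lambda(p^2)}{p^s}+G(s),
\]
where $G$ is given by an absolutely convergent sum over primes and hence is holomorphic for $\Re(s)>1/2$. By the result of Shimura recalled above $L(s,\operatorname{sym}^2 f)$ is entire, and it is moreover non-vanishing on the line $\Re(s)=1$ (a standard fact for this degree-three automorphic $L$-function, which one can also deduce from the factorisation $L(s,f\times f)=\zeta(s)L(s,\operatorname{sym}^2 f)$ together with the usual non-vanishing arguments). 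Hence $-\tfrac{L'}{L}(s,\operatorname{sym}^2 f)=\sum_n\Lambda_{\operatorname{sym}^2}(n)n^{-s}$ has a classical zero-free region, which yields the prime number theorem $\sum_{n\le x}\Lambda_{\operatorname{sym}^2}(n)\ll x(\log x)^{-2}$. Since $\Lambda_{\operatorname{sym}^2}(p)=\lambda(p^2)\log p$ while $\Lambda_{\operatorname{sym}^2}$ is $O(\log n)$ on higher prime powers (whose total contribution in the sums below is $O(1)$), two successive partial summations --- first passing to $\sum_{p\le x}\lambda(p^2)(\log p)/p$, then to $\sum_{p\le x}\lambda(p^2)/p$ --- give the required convergence with error $O(1/\log x)$ (in fact with a smaller error, so that \eqref{merten1} inherits its error term from \eqref{merten}).

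The main obstacle is precisely this last step: transferring the analytic behaviour of $L(s,\operatorname{sym}^2 f)$ into the elementary prime sum with the stated error term. It is entirely standard --- the prime number theorem for $\operatorname{sym}^2 f$ followed by partial summation --- but it does rest on the non-vanishing of $L(s,\operatorname{sym}^2 f)$ on $\Re(s)=1$; in writing up one could instead cite an off-the-shelf version of the $\operatorname{sym}^2$ prime number theorem (for instance from Iwaniec--Kowalski) and keep the argument short.
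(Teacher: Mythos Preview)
Your proposal is correct. The paper's own proof consists entirely of two citations: \eqref{merten} is quoted from Montgomery--Vaughan, and \eqref{merten1} is quoted from a lemma in another paper, with no argument given. So there is no substantive proof in the paper to compare your sketch against.

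Your route for \eqref{merten1} --- split via $\lambda^2(p)=\lambda(p^2)+1$, then use the prime number theorem for $\operatorname{sym}^2 f$ and partial summation to show $\sum_{p\le x}\lambda(p^2)/p=c+O(1/\log x)$ --- is a standard and correct way to unpack what lies behind the cited result. The only point to be a little careful about is the strength of the $\operatorname{sym}^2$ PNT you invoke: mere non-vanishing on $\Re(s)=1$ gives only $o(x)$ for $\sum_{n\le x}\Lambda_{\operatorname{sym}^2}(n)$, which by itself does not deliver the $O(1/\log x)$ tail. You do need a quantitative zero-free region (of de la Vall\'ee Poussin type) to get a power of $\log$ saving; such a region is available for $L(s,\operatorname{sym}^2 f)$ by the usual $3$--$4$--$1$ argument applied to its Rankin--Selberg square, or one can simply cite the general PNT for automorphic $L$-functions in Iwaniec--Kowalski as you suggest. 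With that in hand your two partial summations go through and in fact give a better error than $O(1/\log x)$, so the final error in \eqref{merten1} is indeed inherited from Mertens.
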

\begin{proof}
  The expressions in \eqref{merten} can be found in part (d) of \cite[Theorem 2.7]{MVa1} and the formula in \eqref{merten1} follows from \cite[Lemma 2.1]{GHH}.
\end{proof}

\begin{lemma}
\label{RS3}
  We have for $x \geq 2$ and $\alpha, \beta \in \mr$ with $0 \leq \beta \leq C/\log x$ for any positive constant $C$,
\begin{align}
\label{mertenstype}
  \sum_{p\leq x} \frac{\cos(\alpha \log p) }{p^{1+\beta}}=& \log |\zeta(1+1/\log x+\beta+i\alpha)| +O(1)
  \leq
\begin{cases}
\log\log x+O(1)            & \text{if }  0 \leq |\alpha| \leq 1/\log x  ,   \\
\log(1/|\alpha|)+O(1)        & \text{if }  1/\log x\leq |\alpha| \leq 10,   \\
\log\log|\alpha| + O(1) & \text{if }   10 \leq |\alpha| .
\end{cases}
\end{align}
and
\begin{align} \label{mertenstypesympower}
  \sum_{p\leq x} \frac{\cos(\alpha \log p)\lambda(p^2) }{p^{1+\beta}}=& \log |L(1+1/\log x+\beta+i\alpha, \operatorname{sym}^2 f)| +O(1)
  \leq 3\log\log (|\alpha|+e^e)+O(1).
\end{align}
\end{lemma}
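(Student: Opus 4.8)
The plan is to reduce both identities to the standard link between truncated sums over primes and logarithms of $L$-functions in the half-plane of absolute convergence, and then to feed in classical growth estimates near the line $\Re(s)=1$.

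First I would write $\cos(\alpha\log p)=\Re(p^{-i\alpha})$, so that the left side of \eqref{mertenstype} equals $\Re\sum_{p\le x}p^{-(1+\beta+i\alpha)}$ and, since $\lambda(p^2)\in\mr$, the left side of \eqref{mertenstypesympower} equals $\Re\sum_{p\le x}\lambda(p^2)p^{-(1+\beta+i\alpha)}$. Set $s=1+1/\log x+\beta+i\alpha$, so that $\Re(s)=1+1/\log x+\beta\ge 1+1/\log x$. Using $0\le 1-p^{-1/\log x}\le(\log p)/\log x$, Mertens' bound $\sum_{p\le x}(\log p)/p\ll\log x$, and the inequality $|\lambda(p^2)|\le 3$ (immediate from \eqref{alpha} and \eqref{alphalambdarel}), replacing the exponent $1+\beta+i\alpha$ by $s$ changes either sum by $O(1)$. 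Next, from the Euler products \eqref{Lphichi} and \eqref{Lsymexp}, from \eqref{alpha}, and from the relation $\alpha_p^2+1+\beta_p^2=\lambda(p^2)$ contained in \eqref{alphalambdarel}, the contribution of prime powers $p^k$ with $k\ge 2$ is absolutely bounded, so that $\log\zeta(s)=\sum_p p^{-s}+O(1)$ and $\log L(s,\operatorname{sym}^2 f)=\sum_p\lambda(p^2)p^{-s}+O(1)$ for $\Re(s)>1$. Finally the tails are $O(1)$ in absolute value: since $|\lambda(p^2)|\le 3$ and $\Re(s)\ge 1+1/\log x$, partial summation from Chebyshev's $\pi(u)\ll u/\log u$ gives $\sum_{p>x}p^{-\Re(s)}\le\sum_{p>x}p^{-1-1/\log x}\ll(\log x)^{-1}\int_x^{\infty}u^{-1-1/\log x}\,du=x^{-1/\log x}=O(1)$. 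Taking real parts and combining these three reductions yields the two claimed equalities.

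It then remains to bound $\log|\zeta(s)|$ and $\log|L(s,\operatorname{sym}^2 f)|$. Write $\sigma=\Re(s)=1+1/\log x+\beta$, so $\sigma-1\asymp 1/\log x$. For \eqref{mertenstype} I would use the Laurent expansion $\zeta(w)=(w-1)^{-1}+O(1)$, valid with an absolute implied constant on any fixed bounded disc around $w=1$: when $|\alpha|\le 1/\log x$ one has $|s-1|\asymp 1/\log x$, hence $|\zeta(s)|\asymp\log x$ and $\log|\zeta(s)|=\log\log x+O(1)$; when $1/\log x\le|\alpha|\le 10$ one has $|\alpha|\le|s-1|\ll|\alpha|$, hence $|\zeta(s)|\le|\alpha|^{-1}+O(1)\ll|\alpha|^{-1}$ and $\log|\zeta(s)|\le\log(1/|\alpha|)+O(1)$; and when $|\alpha|\ge 10$ the classical estimate $\zeta(\sigma+it)\ll\log|t|$ for $\sigma\ge 1$, $|t|\ge 2$ gives $\log|\zeta(s)|\le\log\log|\alpha|+O(1)$. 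For \eqref{mertenstypesympower} I would instead use that $L(s,\operatorname{sym}^2 f)$ is entire (Shimura, cited above), satisfies the displayed functional equation with analytic conductor $\asymp(|\alpha|+2)^3$ on $\Re(s)=1$, and has Dirichlet coefficients $\ll_{\varepsilon}n^{\varepsilon}$ by \eqref{lambdabound}; an approximate functional equation (or Perron's formula with a contour shift) with main sum of length $\asymp(|\alpha|+2)^{3/2}$ then yields $|L(s,\operatorname{sym}^2 f)|\ll(\log(|\alpha|+e^e))^3$ uniformly for $\Re(s)\ge 1$ — equivalently this is a standard bound, cf.\ \cite{iwakow} — whence $\log|L(s,\operatorname{sym}^2 f)|\le 3\log\log(|\alpha|+e^e)+O(1)$.

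The \emph{main obstacle} is this final estimate in the symmetric-square case: since $L(s,\operatorname{sym}^2 f)$, unlike $\zeta(s)$, has no pole at $s=1$, its size near $\Re(s)=1$ is controlled purely by the conductor, and one has to extract the correct power $3$ (the degree) of $\log\log$, uniformly in $x$, $\alpha$ and $\beta$ simultaneously; this is exactly where the analytic input (holomorphy, the functional equation, and the Ramanujan bound \eqref{alpha}) really enters, whereas the $\zeta$-estimates are essentially elementary. One should also take care that every $O(1)$ above is genuinely absolute — in particular uniform in $\alpha$ — which is why the tail of the prime sum is estimated in absolute value via Chebyshev rather than by exploiting oscillation.
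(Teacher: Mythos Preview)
Your reductions to $\log|\zeta(s)|$ and $\log|L(s,\operatorname{sym}^2 f)|$ at $s=1+1/\log x+\beta+i\alpha$, and your three-case treatment of the $\zeta$ bound, coincide with the paper's argument (which simply cites \cite{Kou} and \cite{Munsch17} for these steps rather than writing them out). The genuine divergence is in how $\log|L(s,\operatorname{sym}^2 f)|$ is bounded for large $|\alpha|$. You go directly for $|L|\ll(\log|\alpha|)^3$ via an approximate functional equation; the paper instead works with the logarithmic derivative. From $|\alpha_p|=|\beta_p|=1$ it gets $|L'/L(s_1,\operatorname{sym}^2 f)|\le -3\zeta'/\zeta(\sigma_1)\ll\log(|t|+4)$ at $s_1=1+1/\log(|t|+4)+it$, then uses the local partial-fraction formula \cite[(5.28)]{iwakow} over nearby zeros to transport this bound across the strip $1\le\sigma\le\sigma_1$, and finally integrates $L'/L$ along the horizontal segment from $s_1$ to $s$, having first bounded $|\log L(s_1)|\le 3\log\zeta(\sigma_1)\le 3\log\log(|t|+4)$ in the region of absolute convergence. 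Your route is more direct once the approximate functional equation machinery is granted, but there is one imprecision: the coefficient bound you invoke, $a_n\ll_\varepsilon n^\varepsilon$ from \eqref{lambdabound}, only gives the polynomial estimate $|L(1+it)|\ll|t|^\varepsilon$ out of a main sum of length $|t|^{3/2}$. To recover the stated power of the logarithm you need the sharper Ramanujan consequence $|a_n(\operatorname{sym}^2 f)|\le d_3(n)$, which follows from the Euler product \eqref{Lsymexp} together with \eqref{alpha}; then the main sum is $\le\sum_{n\ll|t|^{3/2}}d_3(n)/n\ll(\log|t|)^3$ as you claim. The paper's $L'/L$ route avoids ever needing a pointwise coefficient bound on $a_n$, trading it for the zero-distribution input from \cite[(5.28)]{iwakow}.
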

\begin{proof}
   The equality in  \eqref{mertenstype} is a special case given in \cite[Lemma 3.2]{Kou}, upon setting $f(n)=n^{-\beta}$ there. The estimations in
\eqref{mertenstype} follow from \cite[Lemma 2.9]{Munsch17}. \newline

   Similarly, the equality in  \eqref{mertenstypesympower} is a special case given in \cite[Lemma 3.2]{Kou}, upon setting $f(n)=\lambda(n^2)n^{-\beta}$ there. In our case, the corresponding $L$-function becomes $L(1+1/\log x+\beta+i\alpha, \operatorname{sym}^2 f)\zeta(2+2/\log x+2\beta+2i\alpha)$ by \eqref{Lsymexp} and $\log |\zeta(2+2/\log x+2\beta+2i\alpha)|=O(1)$. Note that the last estimation given in \eqref{mertenstypesympower} equals $O(1)$ when $|\alpha| \leq e^e$. In which case, the estimation follows by arguing similar to those given in the proof of \cite[Lemma 2.6]{G&Zhao24-12}. We may now assume $|\alpha|>e^e$ and follow the arguments in the proof of \cite[Theorem 6.7]{MVa1}.  Note first that by \eqref{alpha} and \eqref{Lsymexp}, we have, for $\sigma =\Re(s) \geq 1+1/\log (|t|+4)$ with $t=\Im(s)$,
\begin{align*}
  \Big |\frac{L'}{L}(s, \operatorname{sym}^2 f) \Big | \leq -3\frac {\zeta'(\sigma)}{\zeta(\sigma)} \ll \log (|t|+4),
\end{align*}
  where the last bound above follows from \cite[(6.9)]{MVa1}. Let $s_1=1+1/\log (|t|+4)+it$. The above gives that
\begin{align}
\label{logLderslargebound}
  \Big |\frac{L'}{L}(s_1, \operatorname{sym}^2 f) \Big | \ll \log (|t|+4).
\end{align}

     We deduce from this and \cite[(5.28)]{iwakow} that
\begin{align}
\label{sumoverzerosbound}
\begin{split}
   \sum_{\substack{\rho}} \Re\frac 1{s_1-\rho} \ll \log (|t|+4),
\end{split}
\end{align}
  where the sum is over those zeros $\rho$ of $L(s, \operatorname{sym}^2 f)$ with $|\rho-(3/2+it)| \leq 1$. Suppose
 that $1 \leq \sigma \leq 1+1/\log(|t|+4)$, then by \cite[(5.28)]{iwakow} again,
\begin{align}
\label{logLdersdiffbound}
\begin{split}
    \frac{L'}{L}(s, \operatorname{sym}^2 f)- \frac{L'}{L}(s_1, \operatorname{sym}^2 f)=\sum_{\rho}\big (\frac 1{s-\rho}-\frac 1{s_1-\rho} \big )+O(\log(|t|+4)).
\end{split}
\end{align}

  Since $|s - \rho|\asymp |s_1 -\rho|$ for all zeros $\rho$ in the sum, it follows that
\begin{align}
\label{sumoverzerosdiffbound}
\begin{split}
    \frac 1{s-\rho}-\frac 1{s_1-\rho} \ll \frac 1{|s_1-\rho|^2\log (|t|+4)} \ll \Re\frac 1{s_1-\rho}.
\end{split}
\end{align}

  We derive from \eqref{logLderslargebound}--\eqref{sumoverzerosdiffbound} that for $\Re(s) \geq 1$, we have
\begin{align}
\label{logLdersbound}
\begin{split}
    \frac{L'}{L}(s, \operatorname{sym}^2 f) \ll \log(|t|+4).
\end{split}
\end{align}

   Note further by \eqref{alpha} and \eqref{Lsymexp} that for $\Re(s) \geq 1+1/\log(|t|+4)$, we have
\begin{align}
\label{logLboundslarge}
  \log |L(s, \operatorname{sym}^2 f)| \leq |\log L(s, \operatorname{sym}^2 f)| \leq 3|\log \zeta(s)| \leq 3\log \log (|t|+4),
\end{align}
  where the last estimation above follows from \cite[Corollary 1.14]{MVa1}.

   In particular, the above holds for $s=s_1$. From this and \eqref{logLdersbound}, for $0 \leq \Re(s) < 1+1/\log(|t|+4)$, we have
\begin{align}
\label{logLboundssmall}
  \log L(s, \operatorname{sym}^2 f)=\log L(s_1, \operatorname{sym}^2 f)+\int\limits^s_{s_1}\frac{L'}{L}(w, \operatorname{sym}^2 f)\dif w \leq  3\log \log (|t|+4)+O(1),
\end{align}
  where the path of the integration above is taken to be the line segment joining the endpoints.
  Now the second estimation given in \eqref{mertenstypesympower} follows readily from \eqref{logLboundslarge} and \eqref{logLboundssmall}.  This completes the proof of the lemma.
\end{proof}

\subsection{Mean Value Estimations} Let $(f(p))_{p \text{ prime}}$ be a sequence of independent random variables distributed uniformly on the unit circle in $\comc$.  A Steinhaus random multiplicative function $f$ is defined by setting $f(n):=\prod_{p^a \| n}f(p)^a$ for all natural numbers $n$.  Therefore, $f$ is a random function taking values in the complex unit circle and completely multiplicative. We denote the expectation by $\mathbb{E}$. \newline
 
  Our first result is taken from \cite[Lemma 4]{Szab24}.
\begin{lemma}
\label{evenmoment}
    Let $a_n$ and $c_n$ be two complex sequences. Let $\mathcal{P}$ be a finite set of primes and define $\Tilde{d}(n)=\sum_{d|n} \mathbf{1}(p|d \implies p\in \mathcal{P})$. For any integer $j\geq 0$ we have
    \begin{equation*}
        \mathbb{E}\Big| \sum_{n\leq x} c_nf(n)\Big|^2 \Big|\sum_{p\in \mathcal{P} }\frac{a_pf(p)}{p^{1/2}}+\frac{a_{p^2} f(p^2)}{p}\Big|^{2j}\ll \Big( \sum_{n\leq x}\Tilde{d}(n)|c_n|^2\Big) \cdot (j!)\cdot \Big(2\cdot \sum_{p\in \mathcal{P}}\frac{|a_p|^2}{p}+\frac{6|a_{p^2} |^2}{p^2} \Big)^j.
    \end{equation*}
\end{lemma}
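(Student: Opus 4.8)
The plan is to use the orthogonality relation $\mathbb{E}\big[f(a)\overline{f(b)}\big]=\mathbf{1}(a=b)$, valid for a Steinhaus random multiplicative function and positive integers $a,b$, to convert the expectation into a diophantine counting sum, and then to extract the factors $\widetilde{d}(n)$, $j!$ and the $j$-th power by elementary combinatorics. First, since $f$ is completely multiplicative, the multinomial theorem gives
\[
  \Big(\sum_{p\in\mathcal{P}}\frac{a_pf(p)}{p^{1/2}}+\frac{a_{p^2}f(p^2)}{p}\Big)^{j}
  =\sum_{m}\gamma_j(m)f(m),
\]
where $m$ runs over integers composed only of primes in $\mathcal{P}$ with $j\le\Omega(m)\le 2j$, and $\gamma_j(m)$ is a multinomial-weighted sum of products of the quantities $a_p/p^{1/2}$ and $a_{p^2}/p$. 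Writing $A(f)=\sum_{n\le x}c_nf(n)$ and $B(f)$ for the bracketed sum over primes, we have $|A(f)|^{2}|B(f)|^{2j}=|A(f)B(f)^{j}|^{2}$, so expanding and taking expectations,
\[
  \mathbb{E}\big[|A(f)|^{2}|B(f)|^{2j}\big]
  =\sum_{\substack{n_1m_1=n_2m_2\\ n_1,n_2\le x}}c_{n_1}\overline{c_{n_2}}\,\gamma_j(m_1)\overline{\gamma_j(m_2)}.
\]

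Next I would pass to absolute values, apply $2\,|c_{n_1}\overline{c_{n_2}}|\le |c_{n_1}|^{2}+|c_{n_2}|^{2}$, and use the symmetry $(n_1,m_1)\leftrightarrow (n_2,m_2)$ of the constraint $n_1m_1=n_2m_2$ to reduce matters to bounding $\sum_{n_1m_1=n_2m_2}|c_{n_1}|^{2}\,|\gamma_j(m_1)|\,|\gamma_j(m_2)|$. To produce the factor $\widetilde{d}(n)$ I would parametrize the solutions: writing $g=(m_1,m_2)$, $m_1=gb$, $m_2=gc$ with $(b,c)=1$, the equation becomes $n_1b=n_2c$, which forces $c\mid n_1$, say $n_1=ck$ and $n_2=bk$. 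Since $b,c,g$ are composed of primes in $\mathcal{P}$, summing over $k$ (and relaxing the harmless constraint $n_2\le x$) rewrites the sum as
\[
  \sum_{n\le x}|c_n|^{2}\sum_{\substack{c\mid n\\ p\mid c\Rightarrow p\in\mathcal{P}}}\ \sum_{\substack{g,b\\ (b,c)=1}}|\gamma_j(gb)|\,|\gamma_j(gc)|,
\]
and since the number of $\mathcal{P}$-smooth divisors $c$ of $n$ is exactly $\widetilde{d}(n)$, it remains to bound the innermost double sum over $g$ and $b$ uniformly in $c$.

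For that last bound I would expand $|\gamma_j(gb)|$ by splitting each multinomial coefficient along the factorization $m=gb$, obtaining an estimate of the shape $|\gamma_j(gb)|\le\sum_{j_1+j_2=j}\binom{j}{j_1}\Gamma_{j_1}(g)\,\Gamma_{j_2}(b)$, where $\Gamma_k(\cdot)$ is the version of $\gamma_k(\cdot)$ with each $a_p$ and $a_{p^2}$ replaced by $|a_p|$ and $|a_{p^2}|$; likewise for $|\gamma_j(gc)|$. Substituting these and reassembling the prime sums over $g$ and $b$ via the multinomial theorem collapses them into powers of $\sum_{p\in\mathcal{P}}|a_p|^{2}/p$ and $\sum_{p\in\mathcal{P}}|a_{p^2}|^{2}/p^{2}$; the binomial losses $\binom{j}{j_1}\le 2^{j}$ are absorbed into the constants, the surviving combinatorial factor is $j!$, and the numerical constants $2$ and $6$ come from tracking the several ways a prescribed exponent $v_p(m)$ can be built up — by two ``$f(p)$'' atoms, by one ``$f(p^{2})$'' atom, or mixed. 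I expect this to be the main obstacle: the role of the common factor $g$ of $m_1$ and $m_2$ is to couple the two tuples so that one genuinely obtains the square quantities $|a_p|^{2}/p$ and $|a_{p^2}|^{2}/p^{2}$, rather than the far larger $\big(\sum_{p\in\mathcal{P}}|a_p|/p^{1/2}\big)^{2j}$, and so that exactly $j!$ rather than $(2j)!$ survives. This is the kind of computation carried out in the works of Harper \cites{Harper,Harper23}; in this precise form the statement is quoted from \cite[Lemma 4]{Szab24}.
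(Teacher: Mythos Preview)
The paper does not give a proof of this lemma; it is simply quoted as \cite[Lemma 4]{Szab24}, which you yourself note at the end of your proposal. There is therefore nothing in the paper to compare your argument against.

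Your outline --- orthogonality for the Steinhaus random multiplicative function, multinomial expansion of the prime sum, and the gcd parametrization of $n_1m_1=n_2m_2$ to extract the $\widetilde d(n)$ factor --- is the natural route and is essentially how the result is proved in \cite{Szab24}. You correctly identify the one genuinely delicate point: after reducing to the sum over $g,b$ with $c$ fixed, one must show that the coupling through the common factor $g$ of $m_1$ and $m_2$ yields the \emph{squared} weights $|a_p|^2/p$, $|a_{p^2}|^2/p^2$ and a combinatorial factor of size $j!$ (rather than the trivial $(2j)!$), uniformly in $c$. Your sketch of this step is plausible but would need to be made precise to stand as a complete proof; as written it is an outline rather than a proof, but nothing more is required here since the paper itself only cites the result.
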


This next result deals with the expectation of certain  random Euler product.
\begin{lemma}
\label{eulerproduct}
    Let $f(n)$ be a Steinhaus random multiplicative function, $\alpha,\beta,\sigma_1,\sigma_2\geq 0$ and $t_1,t_2 \in \rear$.  Suppose that $100(1+\max(\alpha^2,\beta^2))\leq z<y$. Then
    \begin{equation*}
    \begin{split}
        \mathbb{E}\prod_{z\leq p\leq y} & \Big|1-\frac{\alpha_pf(p)}{p^{1/2+\sigma_1+it_1}}\Big|^{-2\alpha}\Big|1-\frac{\beta_pf(p)}{p^{1/2+\sigma_1+it_1}}\Big|^{-2\alpha}
\Big|1-\frac{\alpha_pf(p)}{p^{1/2+\sigma_2+it_2}}\Big|^{-2\beta}\Big|1-\frac{\beta_pf(p)}{p^{1/2+\sigma_2+it_2}}\Big|^{-2\beta} \\
        =&\exp\bigg(\sum_{p\leq y}\frac{\alpha^2\lambda^2(p)}{p^{1+2\sigma_1} }+\frac{\beta^2\lambda^2(p)}{p^{1+2\sigma_2} }+\frac{2\alpha\beta \lambda^2(p)\cos((t_2-t_1)\log p)}{p^{1+\sigma_1+\sigma_2 } }+O\Big(\frac{\max(\alpha,\alpha^3,\beta,\beta^3)}{z^{1/2}}\Big)\bigg).
    \end{split}
    \end{equation*}
\end{lemma}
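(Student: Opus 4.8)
The plan is to exploit the independence of the family $(f(p))_{p}$ to factor the product into local averages, to evaluate each local average by expanding the logarithm of the integrand as a power series in $f(p)$ and $\overline{f(p)}$, and to extract the right-hand side from the quadratic (diagonal) term. Since the factor attached to the prime $p$ depends only on $f(p)$, independence gives $\mathbb{E}\prod_{z\le p\le y}(\cdots)=\prod_{z\le p\le y}\mathbb{E}(\cdots)$, so it suffices to analyse, for a fixed prime $p$, the local average $\mathbb{E}_{u}[\,\cdot\,]$ with $u$ uniform on the unit circle. Let $X=X(u)$ be the logarithm of the $p$-th factor; via $\log|1-w|^{-2\gamma}=2\gamma\sum_{m\ge1}\Re(w^{m})/m$, which is legitimate because $|\alpha_{p}|=|\beta_{p}|=1$ by \eqref{alpha} makes each relevant $|w|\le p^{-1/2}<1$, $X$ is a convergent real power series in $u$ and $\bar u$ with $\mathbb{E}_{u}[X]=0$. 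The key simplification is that the $m=1$ part of $X$ collapses, by $\alpha_{p}+\beta_{p}=\lambda(p)$ from \eqref{alphalambdarel}, to
\[
V:=\frac{2\alpha\lambda(p)}{p^{1/2+\sigma_{1}}}\,\Re\bigl(u\,p^{-it_{1}}\bigr)+\frac{2\beta\lambda(p)}{p^{1/2+\sigma_{2}}}\,\Re\bigl(u\,p^{-it_{2}}\bigr),
\]
while the remainder $R:=X-V$, coming from the terms with $m\ge2$, satisfies $|R|\ll(\alpha+\beta)/p$ uniformly.

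Next I would expand $\mathbb{E}_{u}[e^{X}]-1=\tfrac12\mathbb{E}_{u}[X^{2}]+\sum_{n\ge3}\mathbb{E}_{u}[X^{n}]/n!$. Since $\mathbb{E}_{u}[u^{a}\bar u^{b}]=\mathbf{1}(a=b)$, only the diagonal cross terms of $V^{2}$ survive, and a direct expansion gives $\tfrac12\mathbb{E}_{u}[V^{2}]=m_{p}$, where
\[
m_{p}:=\frac{\alpha^{2}\lambda^{2}(p)}{p^{1+2\sigma_{1}}}+\frac{\beta^{2}\lambda^{2}(p)}{p^{1+2\sigma_{2}}}+\frac{2\alpha\beta\lambda^{2}(p)\cos\!\bigl((t_{2}-t_{1})\log p\bigr)}{p^{1+\sigma_{1}+\sigma_{2}}}
\]
is exactly the $p$-th summand in the statement. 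Everything else is error: $\mathbb{E}_{u}[VR]$ and $\mathbb{E}_{u}[R^{2}]$ are $O((\alpha+\beta)^{2}p^{-3/2})$, using $|V|\ll(\alpha+\beta)p^{-1/2}$ and $|R|\ll(\alpha+\beta)p^{-1}$, and the terms with $n\ge3$ are $O((\alpha+\beta)^{3}p^{-3/2})$, using $\sup_{u}|X|\ll(\alpha+\beta)p^{-1/2}$ together with $\mathbb{E}_{u}[X^{2}]\ll(\alpha+\beta)^{2}p^{-1}$. As $\alpha^{2},\,\beta^{2},\,(\alpha+\beta)^{3}\ll\max(\alpha,\alpha^{3},\beta,\beta^{3})$, all of these are $O(\max(\alpha,\alpha^{3},\beta,\beta^{3})\,p^{-3/2})$. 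The hypothesis $100(1+\max(\alpha^{2},\beta^{2}))\le z$ guarantees that $|m_{p}|$ and this error are each below a fixed absolute constant for every $p\ge z$, so taking logarithms is legitimate and yields $\log\mathbb{E}_{u}[e^{X}]=m_{p}+O(\max(\alpha,\alpha^{3},\beta,\beta^{3})\,p^{-3/2})$.

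Finally I would sum over $z\le p\le y$: exponentiating $\sum_{z\le p\le y}\log\mathbb{E}_{u}[e^{X}]$ and using $\sum_{p\ge z}p^{-3/2}\ll z^{-1/2}$ to absorb the accumulated error gives the asserted identity. The step I expect to be the genuine obstacle is the middle one: one must check that \emph{every} contribution other than the $m=1$ diagonal is truly $O(\,\cdot\,/p^{3/2})$ with an implied constant that is absolute — in particular uniform in $\alpha,\beta,\sigma_{1},\sigma_{2},t_{1},t_{2}$ — and of exactly the polynomial shape $\max(\alpha,\alpha^{3},\beta,\beta^{3})$ in $\alpha,\beta$; this is precisely where the normalisation $100(1+\max(\alpha^{2},\beta^{2}))\le z$ is used, both to control the tails of the logarithmic and exponential series and to make the concluding logarithm legitimate.
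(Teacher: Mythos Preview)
Your argument is correct and is essentially the same approach as the paper's: the paper gives no independent proof but simply cites Euler product result~1 from \cite{harper2020moments} and \cite[Lemma~1]{Szab24}, together with the identity $(\alpha_p+\beta_p)^2=\lambda^2(p)$. Your proposal is precisely an unpacking of those citations --- independence, local logarithmic expansion, extraction of the diagonal quadratic term $m_p$, and error control via the hypothesis $p\ge z$ --- so there is nothing methodologically different to report.
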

\begin{proof}
    This follows from Euler product result 1 in \cite{harper2020moments} and \cite[Lemma 1]{Szab24}, by noting that $(\alpha_p+\beta_p)^2=\lambda^2(p)$.
\end{proof}

The following lemma is taken from \cite[Theorem 5.4]{MVa1} and a version of Parseval’s identity for Dirichlet series.
\begin{lemma}
\label{parseval}
    Let $(a_n)_{n\geq 1}$ be a sequence of complex numbers and $F(s)=\sum_{n=1}^{\infty} a_nn^{-s}$ be the corresponding Dirichlet series. If $\sigma_c$ denotes its abscissa of convergence, then, for any $\sigma>\max(0,\sigma_c)$, we have
    \begin{equation*}
        \int\limits_{1}^{\infty} \frac{\big|\sum_{n\leq x}a_n\big|^2 }{x^{1+2\sigma }} \dif x=\frac{1}{2\pi}\int\limits_{-\infty}^{+\infty}\frac{|F(\sigma+it)|^2}{|\sigma+it|^2} \dif t.
    \end{equation*}
\end{lemma}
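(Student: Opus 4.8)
The plan is to recognize the final statement as the classical Parseval identity for Dirichlet series (cf. \cite[Theorem 5.4]{MVa1}), obtained by combining partial summation with Plancherel's theorem on $\mr$. Write $A(x) = \sum_{n \leq x} a_n$ for the summatory function. The first task is bookkeeping about the abscissa of convergence: since $\sum_n a_n n^{-s}$ converges for $\Re(s) > \sigma_c$, a standard partial summation argument (writing $b_n = a_n n^{-s_0}$ with $\Re(s_0) = \sigma_c + \varepsilon$, noting that $\sum_{n \leq x} b_n$ is bounded, and summing by parts) gives $A(x) \ll_\varepsilon x^{\max(0,\sigma_c) + \varepsilon}$ for every $\varepsilon > 0$. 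Consequently, for any fixed $\sigma > \max(0, \sigma_c)$ we have both $A(x) x^{-\sigma} \to 0$ as $x \to \infty$ and $\int_1^\infty |A(x)|^2 x^{-1-2\sigma}\,\dif x < \infty$.

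Next I would express $F$ in terms of $A$. By Abel summation, for $\Re(s) > \max(0, \sigma_c)$,
\begin{equation*}
F(s) = \sum_{n=1}^\infty a_n n^{-s} = s \int\limits_1^\infty A(x) x^{-s-1}\,\dif x,
\end{equation*}
the boundary term vanishing by the decay just established; hence $F(s)/s = \int_1^\infty A(x) x^{-s-1}\,\dif x$. Substituting $x = e^u$ and writing $s = \sigma + it$ transforms this into
\begin{equation*}
\frac{F(\sigma+it)}{\sigma+it} = \int\limits_0^\infty A(e^u) e^{-\sigma u} e^{-itu}\,\dif u = \widehat{g}(t),
\end{equation*}
where $g(u) := A(e^u) e^{-\sigma u}$ for $u \geq 0$ and $g(u) := 0$ for $u < 0$, and $\widehat{g}(t) := \int_{\mr} g(u) e^{-itu}\,\dif u$. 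The growth bound above shows $g \in L^1(\mr) \cap L^2(\mr)$, with $\int_{\mr} |g(u)|^2\,\dif u = \int_1^\infty |A(x)|^2 x^{-1-2\sigma}\,\dif x$ after reverting the substitution.

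Finally I would invoke Plancherel's theorem for the Fourier transform on $\mr$, which in this normalization reads $\int_{\mr} |\widehat{g}(t)|^2\,\dif t = 2\pi \int_{\mr} |g(u)|^2\,\dif u$. The left-hand side equals $\int_{-\infty}^{+\infty} |F(\sigma+it)|^2/|\sigma+it|^2\,\dif t$, and the right-hand side equals $2\pi \int_1^\infty |A(x)|^2 x^{-1-2\sigma}\,\dif x$; dividing by $2\pi$ yields the asserted identity. The only point requiring genuine care is the abscissa-of-convergence bookkeeping in the first step — establishing the polynomial growth bound for $A(x)$ that simultaneously justifies the vanishing of the boundary term in the partial summation and the $L^2$-membership of $g$ needed for Plancherel; everything else is a routine change of variables.
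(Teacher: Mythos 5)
Your proof is correct. The paper does not actually prove this lemma — it simply cites \cite[Theorem 5.4]{MVa1} — and the argument you give (bounding $A(x)\ll x^{\max(0,\sigma_c)+\varepsilon}$ from the definition of the abscissa of convergence, using Abel summation to obtain $F(s)/s=\int_1^\infty A(x)x^{-s-1}\,\dif x$ with the boundary term vanishing, substituting $x=e^u$ so that $F(\sigma+it)/(\sigma+it)$ becomes the Fourier transform of $g(u)=A(e^u)e^{-\sigma u}\mathbf{1}_{u\geq 0}\in L^1\cap L^2$, and then invoking Plancherel with the $\int g(u)e^{-itu}\,\dif u$ normalization) is precisely the standard proof of the cited theorem, so it matches the paper's intent.
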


   Our next two results estimate the mean values of products involving $\lambda(n)$.
 \begin{lemma}
\label{lemlambdasquaresum1}
  For positive co-prime integers $c_1, c_2$, we have for $x \geq 2$ and some constant $C_1$, 
\begin{align}
\label{con1}
\begin{split}
     & \sum_{\substack{n \leq x }} |\lambda(c_1n)\lambda(c_2n)| \leq   C_1x(P_1(c_1c_2)+P_2(c_1c_2)),
\end{split}
\end{align}
  where for any positive integer $c$, 
\begin{align}
\label{P12}
\begin{split}
     P_1(c)=& \prod_{\substack{ p|c \\ p^{\nu_p} \| c}}\Big (|\lambda(p^{\nu_p})|+\frac {|\lambda(p^{\nu_p+1})|\cdot |\lambda(p)|}{p}+\frac {|\lambda(p^{\nu_p+2})|\cdot |\lambda(p^2)|}{p^{2}}+\cdots \Big ), \\
     P_2(c)=& \prod_{\substack{ p|c \\ p^{\nu_p} \| c}}\Big (|\lambda(p^{\nu_p})|+\frac {|\lambda(p^{\nu_p+1})|\cdot |\lambda(p)|}{p^{3/4}}+\frac {|\lambda(p^{\nu_p+2})|\cdot |\lambda(p^2)|}{p^{3/2}}+\cdots \Big ).
\end{split}
\end{align}
\end{lemma}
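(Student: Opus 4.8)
The plan is to factor each $n\le x$ according to the primes dividing $c_1c_2$. Write $n=m\ell$, where $m$ is the largest divisor of $n$ all of whose prime factors divide $c_1c_2$ and $(\ell,c_1c_2)=1$. Since $c_1$ and $c_2$ are coprime, each prime dividing $c_1c_2$ divides exactly one of them, and $\ell$ is coprime to $c_1$, to $c_2$ and to $m$; hence the multiplicativity of $\lambda$ gives $\lambda(c_in)=\lambda(c_im)\lambda(\ell)$ for $i=1,2$, so that $|\lambda(c_1n)\lambda(c_2n)|=|\lambda(c_1m)\lambda(c_2m)|\,\lambda(\ell)^2$. Fibering the sum over $m$ then gives
\[
\sum_{n\le x}|\lambda(c_1n)\lambda(c_2n)| = \sum_{\substack{m\le x\\ p\mid m\,\Rightarrow\, p\mid c_1c_2}}|\lambda(c_1m)\lambda(c_2m)|\sum_{\substack{\ell\le x/m\\(\ell,c_1c_2)=1}}\lambda(\ell)^2 .
\]

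The next step is to estimate the inner sum. Dropping the condition $(\ell,c_1c_2)=1$ only enlarges it, so it is enough to bound $\sum_{\ell\le y}\lambda(\ell)^2$ for $y\ge1$. The key analytic input is the Rankin--Selberg identity
\[
\sum_{n\ge1}\frac{\lambda(n)^2}{n^s}=\frac{\zeta(s)\,L(s,\operatorname{sym}^2 f)}{\zeta(2s)},
\]
which follows by comparing Euler products via \eqref{Lphichi}, \eqref{Lsymexp} and \eqref{sumlambdapsquare}. Since $L(s,\operatorname{sym}^2 f)$ is entire (Shimura) and $\zeta(2s)$ has no zeros for $\Re(s)>1/2$, the Dirichlet series $D(s):=L(s,\operatorname{sym}^2 f)/\zeta(2s)$ is holomorphic for $\Re(s)>1/2$; a truncated Perron formula, moving the contour to the line $\Re(s)=1/2$ and using the convexity bound for $L(s,\operatorname{sym}^2 f)$ there (or, alternatively, a Dirichlet hyperbola argument), yields
\[
\sum_{\ell\le y}\lambda(\ell)^2=D(1)\,y+O\!\left(y^{3/4}\right),\qquad y\ge1,
\]
with an absolute implied constant. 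Hence the inner sum is $\ll (x/m)+(x/m)^{3/4}$, uniformly in $m$.

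Substituting this back, the main term contributes $\ll x\sum_{m}|\lambda(c_1m)\lambda(c_2m)|/m$ and the error term contributes $\ll x^{3/4}\sum_{m}|\lambda(c_1m)\lambda(c_2m)|/m^{3/4}$, the sums being over all $m$ whose prime factors divide $c_1c_2$; both sums are finite since $c_1c_2$ has only finitely many prime factors and $|\lambda(p^k)|\le k+1$. Each such sum is multiplicative: writing $c_1c_2=\prod_p p^{\nu_p}$, coprimality of $c_1,c_2$ forces $p^{\nu_p}$ to divide exactly one of $c_1,c_2$, and the $m$-sum factors as $\prod_{p\mid c_1c_2}\bigl(|\lambda(p^{\nu_p})|+|\lambda(p^{\nu_p+1})\lambda(p)|\,p^{-\sigma}+|\lambda(p^{\nu_p+2})\lambda(p^2)|\,p^{-2\sigma}+\cdots\bigr)$, which is exactly $P_1(c_1c_2)$ for $\sigma=1$ and $P_2(c_1c_2)$ for $\sigma=3/4$, by \eqref{P12}. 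Since $x^{3/4}\le x$ for $x\ge1$, combining the two contributions gives $\sum_{n\le x}|\lambda(c_1n)\lambda(c_2n)|\le C_1 x\bigl(P_1(c_1c_2)+P_2(c_1c_2)\bigr)$ for a suitable absolute constant $C_1$.

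The main obstacle is the mean value $\sum_{\ell\le y}\lambda(\ell)^2=D(1)y+O(y^{3/4})$: the remainder is bookkeeping with multiplicative functions, but this step genuinely requires the cancellation encoded in $L(s,\operatorname{sym}^2 f)$ --- the trivial bound $\lambda(\ell)^2\le d(\ell)^2$ would introduce powers of $\log y$ that are fatal downstream --- and one must check that the implied constant is independent of $c_1,c_2$ (which is why the coprimality condition on $\ell$ is discarded before estimating) and that the power-saving exponent $3/4<1$ survives both the division by $m^{3/4}$ and the extension of the $m$-sum to infinity. Any exponent $\theta<1$ in place of $3/4$ would work equally well, a smaller $p^{\theta}$ in the denominators only making $P_2(c_1c_2)$ larger.
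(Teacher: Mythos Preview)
Your argument is correct and takes a genuinely different route from the paper. The paper works with the full Dirichlet series $F(s;c_1,c_2)=\sum_{n\ge1}|\lambda(c_1n)\lambda(c_2n)|n^{-s}$: it smooths the sum with a compactly supported weight, applies Mellin inversion, factors $F(s;c_1,c_2)$ as $\zeta(s)L(s,\operatorname{sym}^2 f)/\zeta(2s)$ times the local Euler factors at the primes dividing $c_1c_2$, and shifts the contour past the pole at $s=1$. The residue yields the $P_1(c_1c_2)$ term and the integral on the shifted line yields the $P_2(c_1c_2)$ term; along the way one must check that the $c_1c_2$-dependent local factors stay bounded (by $(c_1c_2)^{\varepsilon}$) on the new line so that the shift is justified.

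You instead separate the arithmetic at the primes dividing $c_1c_2$ \emph{before} any analysis, via the elementary factorisation $n=m\ell$, and reduce everything to the single uniform Rankin--Selberg estimate $\sum_{\ell\le y}\lambda(\ell)^2=cy+O(y^{3/4})$ with absolute constants (this is exactly the case $c=1$ of the paper's Lemma~\ref{lemlambdasquaresum}). The $c_1c_2$-dependence then sits in a finite Euler product that one evaluates by hand, and dropping the coprimality condition on $\ell$ makes the uniformity in $c_1,c_2$ automatic. This is cleaner and more modular: one contour shift (or hyperbola argument) done once, rather than a shift carrying the local factors along. The paper's approach, by contrast, is more ``all at once'' and makes the origin of $P_1$ versus $P_2$ (residue versus shifted integral) slightly more transparent; both give the same bound, and your remark that any exponent $\theta<1$ in the error term would do applies equally to either method.
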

\begin{proof}
Dividing into dyadic blocks, to establish \eqref{con1}, it suffices to show that
\begin{align}
\label{con1dyadic}
\begin{split}
     & \sum_{\substack{x/2 < n \leq x }} |\lambda(c_1n)\lambda(c_2n)| \ll   x(P_1(c_1c_2)+P_2(c_1c_2)).
\end{split}
\end{align}

   Let $\Phi$ for a smooth, non-negative function compactly supported on $[1/4, 3/2]$ satisfying $\Phi(x) \leq 1$ for all $x$ and $\Phi(x) =1$
for $x\in [1/2,1]$, and recall that the Mellin transform ${\widehat \Phi}(s)$ of $\Phi$ is defined for any complex number $s$ by
\begin{equation*}
%%\label{Phicheck}
{\widehat \Phi}(s) = \int\limits_{0}^{\infty} \Phi(x)x^{s}\frac {\dif x}{x}.
\end{equation*}
    Note that integration by parts shows that $\widehat{\Phi}(s)$ is a function satisfying the bound
\begin{align}
\label{boundsforphi}
  \widehat{\Phi}(s) \ll \min (1, |s|^{-1}(1+|s|)^{-E}),
\end{align}
for all $\Re(s) > 0$, and integers $E>0$. \newline

In order to establish \eqref{con1dyadic}, we shall show 
\begin{align}
\label{con1dyadicsmooth}
\sum_{\substack{x/2 < n \leq x }} |\lambda(c_1n)\lambda(c_2n)|\Phi \Big(\frac {n}{x}\Big) \ll   x(P_1(c_1c_2)+P_2(c_1c_2)).
\end{align}  

Now the Mellin inversion leads to 
\begin{align}
\label{Perronprime1}
\begin{split}
    \sum_{\substack{x/2 < n \leq x }} |\lambda(c_1n)\lambda(c_2n)|\Phi \Big(\frac {n}{x} \Big) =&\frac 1{ 2\pi i}\int\limits_{(2)}F(s;c_1,c_2)\widehat{\Phi}(s) \frac{x^s}{s} \dif s. 
\end{split}
\end{align}
  where
\begin{align}
\label{F1}
\begin{split}
 F(s;c_1,c_2)=\sum_{\substack{n \geq 1}}\frac {|\lambda(c_1n)\lambda(c_2n)|}{n^{s}}.
\end{split}
\end{align}
  We write, for simplicity, $F(s)=F(s;1,1)$ and observe that $|\lambda(n)|^2=\lambda^2(n)$ as $\lambda(n)$ is real. We then note that 
\begin{align}
\label{Frel1}
\begin{split}
 F(s)\zeta(2s)=\zeta(s)L(s, \operatorname{sym}^2 f),
\end{split}
\end{align}
  with $L(s, \operatorname{sym}^2 f)$ defined in \eqref{Lsymexp}. \newline

  We thus deduce that for $\Re(s)>1$,
\begin{align*}
%%\label{Fd1}
\begin{split}
    F(s;c_1,c_2) =& \prod_{\substack{ p|c_1c_2 \\ p^{\nu_p} \| c_1c_2}}\Big (|\lambda(p^{\nu_p})|+\frac {|\lambda(p^{\nu_p+1})|\cdot |\lambda(p)|}{p^s}+\frac {|\lambda(p^{\nu_p+2})|\cdot |\lambda(p^2)|}{p^{2s}}+\cdots \Big )\prod_{\substack{ p \nmid c_1c_2}}\Big (1+\frac {|\lambda(p)|^2}{p^s}+\frac {|\lambda(p^{2})|^2}{p^{2s}}+\cdots \Big ) \\
=& F(s)\prod_{\substack{ p|c_1c_2 \\ p^{\nu_p} \| c_1c_2}}\Big (|\lambda(p^{\nu_p})|+\frac {|\lambda(p^{\nu_p+1})|\cdot |\lambda(p)|}{p^s}+\frac {|\lambda(p^{\nu_p+2})|\cdot |\lambda(p^2)|}{p^{2s}}+\cdots \Big ) \frac{\zeta_{p}(2s)}{ \zeta_p(s)L_p(s, \operatorname{sym}^2 f)} \\
=& \frac{\zeta(s)L(s, \operatorname{sym}^2 f)}{\zeta(2s)}  \prod_{\substack{ p|c_1c_2 \\ p^{\nu_p} \| c_1c_2}}\Big (|\lambda(p^{\nu_p})|+\frac {|\lambda(p^{\nu_p+1})|\cdot |\lambda(p)|}{p^s}+\frac {|\lambda(p^{\nu_p+2})|\cdot |\lambda(p^2)|}{p^{2s}}+\cdots \Big ) \frac{\zeta_{p}(2s)}{ \zeta_p(s)L_p(s, \operatorname{sym}^2 f)} , 
\end{split}
\end{align*}
where $L_p$ denotes the local factor at the prime $p$ in the Euler product of $L$ for any $L$-function. Note that the above relation continues to hold for all complex $s$. \newline

  By \cite[(5.20)]{iwakow}, we have for $0 \leq \sigma \leq 1$,
\begin{align}
\label{zetaest1}
\begin{split}
    \frac {s-1}{s+1}\zeta(s) \ll (|s|+1)^{(1-\sigma)/2+\varepsilon}, \quad L(s, \operatorname{sym}^2 f)\ll (|s|+1)^{3(1-\sigma)/2+\varepsilon}.
\end{split}
\end{align}

  Moreover, for $\Re(s) \leq 1/2+\varepsilon$, from \cite[Theorem 6.7]{MVa1}, we get
\begin{align}
\label{zeta2sbound1}
\begin{split}
   \frac {1}{\zeta(2s)} \ll &
\begin{cases}
   |2s-1|, \quad |\Im(2s)| < 7/8, \\
   \log (|\Im(2s)|+4), \quad |\Im(2s)| \geq 7/8.
\end{cases}
\end{split}
\end{align}

Hence from \eqref{F1}, \eqref{zetaest1} and \eqref{zeta2sbound1}, for $\sigma=\Re(s) \geq 1/2+\varepsilon$,
\begin{align}
\label{Fbound1}
\begin{split}
   \frac {s-1}{s+1}F(s) \ll &
\begin{cases}
   |2s-1|(|s|+1)^{2(1-\sigma)+\varepsilon}, \quad |\Im(2s)| < 7/8, \\
   \log (|\Im(2s)|+4)(|s|+1)^{2(1-\sigma)+\varepsilon}, \quad |\Im(2s)| \geq 7/8.
\end{cases}
\end{split}
\end{align}

  Furthermore, we deduce from \eqref{lambdabound} that for $\sigma \geq 1/2+\varepsilon$ and some constant $D_0$, 
\begin{align*}
%%\label{dfactorbound1}
\begin{split}
 \prod_{\substack{ p|c_1c_2 \\ p^{\nu_p} \| c_1c_2}} & \Big (|\lambda(p^{\nu_p})|+\frac {|\lambda(p^{\nu_p+1})|\cdot |\lambda(p)|}{p^s}+\frac {|\lambda(p^{\nu_p+2})|\cdot |\lambda(p^2)|}{p^{2s}}+\cdots \Big ) \frac{\zeta_{p}(2s)}{ \zeta_p(s)L_p(s, \operatorname{sym}^2 f)}  \\
\leq & D^{\omega(c_1c_2)}_0\prod_{\substack{ p|c_1c_2 \\ p^{\nu_p} \| c_1c_2}}\Big (|\lambda(p^{\nu_p})|+\frac {|\lambda(p^{\nu_p+1})|\cdot |\lambda(p)|}{p^s}+\frac {|\lambda(p^{\nu_p+2})|\cdot |\lambda(p^2)|}{p^{2s}}+\cdots \Big )\\
\ll & D^{\omega(c_1c_2)}_1\prod_{\substack{ p|c_1c_2 \\ p^{\nu_p} \| c_1c_2}}\Big ((\nu_p+1)^2+\frac {(\nu_p+2)^2}{p^{s}}+\frac {(\nu_p+3)^2}{p^{2s}}+\cdots \Big ),
\end{split}
\end{align*}
   where $D_1$ is a constant and $\omega(n)$ denote the number of primes dividing $n$. \newline
   
  Using $(\nu_p+j)/(\nu_p+1) \leq j$ for all $j \geq 1$, we deduce from the above that for $\sigma=\Re(s) \geq 1/2+\varepsilon$,
\begin{align}
\label{dfactorboundsimplified1}
\begin{split}
 \prod_{\substack{ p|c_1c_2 \\ p^{\nu_p} \| c_1c_2}} & \Big (|\lambda(p^{\nu_p})|+\frac {|\lambda(p^{\nu_p+1})|\cdot |\lambda(p)|}{p^s}+\frac {|\lambda(p^{\nu_p+2})|\cdot |\lambda(p^2)|}{p^{2s}}+\cdots \Big ) \frac{\zeta_{p}(2s)}{ \zeta_p(s)L_p(s, \operatorname{sym}^2 f)}  \\
\ll & D^{\omega(c_1c_2)}_1\prod_{\substack{ p|c_1c_2}}(\nu_p+1)^2 \cdot \prod_{\substack{ p|c_1c_2}}\Big (1+\frac {2^2}{p^{1/2}}+\frac {3^2}{p}+\cdots \Big ) \ll d(c_1c_2)^2D^{\omega(c_1c_2)}_2,
\end{split}
\end{align}
  where $D_2$ is a constant. \newline

From  \cite[Theorems 2.10]{MVa1}m for $n \geq 3$,
\begin{align}
\label{omegandnbound1}
\omega(n) \ll \frac {\log n}{\log \log n}.
\end{align}

   We derive from \eqref{dfactorboundsimplified1} and \eqref{omegandnbound1}, together with the well-known bound $d(n) \ll n^{\varepsilon}$, that for $\sigma \geq 1/2+\varepsilon$,
\begin{align}
\label{dfactorboundfurthersimplified1}
\begin{split}
  & \prod_{\substack{ p|c_1c_2 \\ p^{\nu_p} \| c_1c_2}}\Big (|\lambda(p^{\nu_p})|+\frac {|\lambda(p^{\nu_p+1})|\cdot |\lambda(p)|}{p^s}+\frac {|\lambda(p^{\nu_p+2})|\cdot |\lambda(p^2)|}{p^{2s}}+\cdots \Big ) \frac{\zeta_{p}(2s)}{ \zeta_p(s)L_p(s, \operatorname{sym}^2 f)}  \ll  (c_1c_2)^{\varepsilon}.
\end{split}
\end{align}

Applying \eqref{boundsforphi}, \eqref{Fbound1} and \eqref{dfactorboundfurthersimplified1} and shifting the contour of the integral in \eqref{Perronprime} to $\Re(s)=1-\varepsilon$, we encounter a simple pole at $s=1$ of $\zeta(s)$. By \eqref{Frel1}, the residue equals
\begin{align}
\label{ResPerron1}
\begin{split}
    x\prod_{\substack{ p|c_1c_2 \\ p^{\nu_p} \| c_1c_2}} & \Big (|\lambda(p^{\nu_p})|+\frac {|\lambda(p^{\nu_p+1})|\cdot |\lambda(p)|}{p}+\frac {|\lambda(p^{\nu_p+2})|\cdot |\lambda(p^2)|}{p^{2}}+\cdots \Big ) \frac{\zeta_{p}(2)}{ \zeta_p(1)L_p(1, \operatorname{sym}^2 f)}  \frac {L(1, \operatorname{sym}^2 f)}{\zeta(2)}{\widehat \Phi}(1) \\
\ll &  x\prod_{\substack{ p|c_1c_2 \\ p^{\nu_p} \| c_1c_2}}\Big (|\lambda(p^{\nu_p})|+\frac {|\lambda(p^{\nu_p+1})|\cdot |\lambda(p)|}{p}+\frac {|\lambda(p^{\nu_p+2})|\cdot |\lambda(p^2)|}{p^{2}}+\cdots \Big ) =  xP_1(c_1c_2). 
\end{split}
\end{align}

   Similarly, the integral on the new line is 
\begin{align}
\label{Perronprimeint2}
\begin{split}
    \ll &  x\prod_{\substack{ p|c_1c_2 \\ p^{\nu_p} \| c_1c_2}}\Big (|\lambda(p^{\nu_p})|+\frac {|\lambda(p^{\nu_p+1})|\cdot |\lambda(p)|}{p^{1-\varepsilon}}+\frac {|\lambda(p^{\nu_p+2})|\cdot |\lambda(p^2)|}{p^{2-2\varepsilon}}+\cdots \Big ) = xP_2(c_1c_2).
\end{split}
\end{align}

Hence, we deduce from \eqref{Perronprime1}, \eqref{ResPerron1} and \eqref{Perronprimeint2} that for some constant $C_1$, \eqref{con1dyadicsmooth} holds. This completes the proof of the lemma.
\end{proof}

\begin{lemma}
\label{lemlambdasquaresum}
  We have, for $x \geq 2$,
\begin{align}
\label{con0}
    \sum_{\substack{n \leq x \\ c|n}} |\lambda(n)|^2=  \frac x{c}\prod_{\substack{ p|c \\ p^{\nu_p} \| c}}\Big ( \sum_{j=0}^{\infty} \frac{|\lambda(p^{\nu_p+j})|^2}{p^j} \Big ) \frac{\zeta_{p}(2) L(1, \operatorname{sym}^2 f)}{ \zeta_p(1)L_p(1, \operatorname{sym}^2 f) \zeta(2) } +O(x^{3/4+\varepsilon}).
\end{align}
\end{lemma}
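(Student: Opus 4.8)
The plan is to follow the strategy of Lemma~\ref{lemlambdasquaresum1}: attach a Dirichlet series to the sum on the left, express it via $\zeta$, $L(s,\operatorname{sym}^2 f)$ and $\zeta(2s)$, and then combine Perron's formula with a contour shift. For $\Re(s)>1$ put $G(s;c)=\sum_{c\mid n}\lambda^2(n)n^{-s}$. Since $\lambda^2$ is multiplicative and real, splitting the Euler product according to whether $p\mid c$, writing $a=\nu_p+j$ in each local factor with $p\mid c$ (which pulls out a factor $p^{-\nu_p s}$, and $\prod_{p\mid c}p^{\nu_p}=c$), and using \eqref{Frel1}, which locally reads $F_p(s)=\zeta_p(s)L_p(s,\operatorname{sym}^2 f)/\zeta_p(2s)$ for $F(s)=\sum_n\lambda^2(n)n^{-s}$, we obtain
\begin{align*}
 G(s;c)&=\frac{F(s)}{c^s}\prod_{\substack{p\mid c\\ p^{\nu_p}\|c}}\frac{1}{F_p(s)}\sum_{j\geq 0}\frac{\lambda^2(p^{\nu_p+j})}{p^{js}}
 =\frac{1}{c^s}\cdot\frac{\zeta(s)L(s,\operatorname{sym}^2 f)}{\zeta(2s)}\,H(s;c),\\
 H(s;c)&:=\prod_{\substack{p\mid c\\ p^{\nu_p}\|c}}\frac{\zeta_p(2s)}{\zeta_p(s)L_p(s,\operatorname{sym}^2 f)}\sum_{j\geq 0}\frac{\lambda^2(p^{\nu_p+j})}{p^{js}}.
\end{align*}
Here $H(\cdot;c)$ is a finite product of Dirichlet series converging absolutely for $\Re(s)>0$, while $L(s,\operatorname{sym}^2 f)$ is entire and $\zeta(2s)\neq 0$ for $\Re(s)\geq\tfrac12$; hence $G(\cdot;c)$ continues meromorphically to $\Re(s)>\tfrac12$ with a single, simple pole at $s=1$, coming from $\zeta(s)$.

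Next I would invoke a truncated Perron formula at height $T$ to express $\sum_{n\leq x,\,c\mid n}\lambda^2(n)$ as $\frac{1}{2\pi i}\int_{c_0-iT}^{c_0+iT}G(s;c)\frac{x^s}{s}\,ds$ up to an error treated below, with $c_0=1+1/\log x$, and then shift the line of integration to $\Re(s)=\tfrac12+\varepsilon$. The only pole crossed is the simple one at $s=1$, whose residue equals, since $\operatorname{Res}_{s=1}\zeta(s)=1$,
\begin{align*}
 &\frac{x}{c}\cdot\frac{L(1,\operatorname{sym}^2 f)}{\zeta(2)}\prod_{\substack{p\mid c\\ p^{\nu_p}\|c}}\frac{\zeta_p(2)}{\zeta_p(1)L_p(1,\operatorname{sym}^2 f)}\sum_{j\geq 0}\frac{\lambda^2(p^{\nu_p+j})}{p^{j}},
\end{align*}
which is precisely the claimed main term.

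What remains is to estimate, uniformly in $c$, the residual integrals and the Perron error. On $\Re(s)=\tfrac12+\varepsilon$ the convexity bounds \eqref{zetaest1} for $\zeta$ and $L(s,\operatorname{sym}^2 f)$, together with \eqref{zeta2sbound1} and the non-vanishing of $\zeta$ on $\Re(s)=1$, give $\bigl|\zeta(s)L(s,\operatorname{sym}^2 f)/\zeta(2s)\bigr|\ll(1+|t|)^{1+\varepsilon}$; and arguing exactly as in the chain \eqref{lambdabound}, \eqref{omegandnbound1}, \eqref{dfactorboundfurthersimplified1} one has $H(s;c)\ll c^{\varepsilon}$ there, and similarly on the horizontal segments. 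Consequently the vertical integral on $\Re(s)=\tfrac12+\varepsilon$ is $\ll x^{1/2+\varepsilon}T$, the two horizontal segments contribute $\ll x^{1+\varepsilon}T^{-1}+x^{1/2+\varepsilon}$, and the truncated Perron formula costs $\ll x^{1+\varepsilon}T^{-1}+x^{\varepsilon}$ (using $G(c_0;c)\ll c^{-1+\varepsilon}\log x$, immediate from the displayed formula for $G$). Taking $T=x^{1/4}$ balances these contributions and produces the error term $O(x^{3/4+\varepsilon})$.

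The crux of the matter is precisely this final error bookkeeping. To reach the exponent $3/4$ one must move the contour all the way down to the critical line $\Re(s)=\tfrac12$, where the denominator $\zeta(2s)$ lies on the edge $\Re(2s)=1$ of its zero-free region and the quotient $\zeta(s)L(s,\operatorname{sym}^2 f)/\zeta(2s)$ grows like $(1+|t|)^{1+\varepsilon}$, and one then has to weigh this growth against the Perron truncation error through the choice of $T$; I stress that only the standard convexity bound for $L(s,\operatorname{sym}^2 f)$ is needed here, not subconvexity. A secondary point is uniformity in $c$, which is secured through the same estimation of the finite product $H(s;c)$ as in the proof of Lemma~\ref{lemlambdasquaresum1}.
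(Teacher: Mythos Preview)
Your proposal is correct and follows essentially the same route as the paper: both attach the Dirichlet series $G(s;c)$, factor it as $c^{-s}\zeta(s)L(s,\operatorname{sym}^2 f)\zeta(2s)^{-1}$ times the finite local product, apply truncated Perron at $c_0=1+1/\log x$, shift to $\Re(s)=\tfrac12+\varepsilon$, pick up the residue at $s=1$, bound the remaining integrals via convexity and the estimate $H(s;c)\ll c^{\varepsilon}$, and balance with $T=x^{1/4}$. The bookkeeping of the error terms and the uniformity in $c$ match the paper's argument line by line.
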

\begin{proof}
  We apply Perron's formula as given in \cite[Corollary 5.3]{MVa1} to see that
\begin{align}
\label{Perronprime}
\begin{split}
    &\sum_{\substack{n \leq x \\ c|n}} |\lambda(n)|^2=  \frac 1{ 2\pi i}\int\limits_{1+1/\log x-iT}^{1+1/\log x+iT}\big(\sum_{\substack{n \geq 1 \\ c|n}}\frac {|\lambda(n)|^2}{n^s}\big) \frac{x^s}{s} \dif s +R,
\end{split}
\end{align}
  where
\begin{align*}
%%\label{R}
\begin{split}
  R \ll &  \sum_{\substack{x/2< n <2x  \\  n \neq x \\ c|n  }}|\lambda(n)|^2\min \left( 1, \frac {x}{T|n-x|} \right) +\frac {4^{1+1/\log x}+x^{1+1/\log x}}{T}\sum_{\substack{n \geq 1 \\ c|n}}\frac {|\lambda(n)|^2}{n^{1+1/\log x}}.
\end{split}
\end{align*}

  We now apply the estimation given in \eqref{lambdabound} to see that
\begin{align}
\label{R1}
\begin{split}
   R \ll & x^{\varepsilon} \sum_{\substack{x/2< n <2x  \\  n \neq x   }}\min \left( 1, \frac {x}{T|n-x|} \right)+\frac {x^{1+\varepsilon}}{T}\sum_{\substack{n \geq 1 }}\frac {1}{n^{1+1/\log x}} \ll  x^{\varepsilon}(1+\frac {x}{T}),
\end{split}
\end{align}
  where the last estimation above follows from the bound given for $R_1$ on \cite[p. 401]{MVa1}.

  We deduce from \eqref{Frel1} that for $\Re(s)>1$,
\begin{align*}
%%\label{Fd}
\begin{split}
 \sum_{\substack{n \geq 1 \\ c|n}}\frac {|\lambda(n)|^2}{n^{s}}=& \frac 1{c^s}\sum_{\substack{n \geq 1 }}\frac {|\lambda(cn)|^2}{n^{s}} \\
=& \frac 1{c^s}\prod_{\substack{ p|d \\ p^{\nu_p} \| c}}\Big (|\lambda(p^{\nu_p})|^2+\frac {|\lambda(p^{\nu_p+1})|^2}{p^s}+\frac {|\lambda(p^{\nu_p+2})|^2}{p^{2s}}+\cdots \Big )\prod_{\substack{ p \nmid c}}\Big (1+\frac {|\lambda(p)|^2}{p^s}+\frac {|\lambda(p^{2})|^2}{p^{2s}}+\cdots \Big ) \\
=& \frac {F(s)}{c^s}\prod_{\substack{ p|c \\ p^{\nu_p} \| c}}\Big ( \sum_{j=0}^{\infty} \frac{|\lambda(p^{\nu_p+j})|^2}{p^{js}} \Big ) \frac{\zeta_p(2s)}{\zeta_p(s)L_p(s, \operatorname{sym}^2 f)}.
\end{split}
\end{align*}
Note that the above relation continues to hold for all complex $s$. \newline

Shifting the contour of the integral in \eqref{Perronprime} to $\Re(s)=1/2+\varepsilon$ to pick up a simple pole at $s=1$ of $\zeta(s)$. By \eqref{Frel1}, we see that the corresponding residue equals
\begin{align*}
%%\label{ResPerron}
\begin{split}
    \frac x{c}\prod_{\substack{ p|c \\ p^{\nu_p} \| c}}\Big ( \sum_{j=0}^{\infty} \frac{|\lambda(p^{\nu_p+j})|^2}{p^{j}} \Big ) \frac {\zeta_p(2) L(1, \operatorname{sym}^2 f)}{\zeta_p(1)L_p(1, \operatorname{sym}^2 f) \zeta(2)}.
\end{split}
\end{align*}

Therefore,
\begin{align}
\label{Perronprimeint1}
\begin{split}
 \frac 1{ 2\pi i}  \int\limits_{1+1/\log x-iT}^{1+1/\log x+iT} & \Big(\sum_{\substack{n \geq 1 \\ c|n}}\frac {|\lambda(n)|^2}{n^s}\Big) \frac{x^s}{s} \dif s \\
& = \frac x{c}  \prod_{\substack{ p|c \\ p^{\nu_p} \| c}} \Big ( \sum_{j=0}^{\infty} \frac{|\lambda(p^{\nu_p+j})|^2}{p^{j}} \Big ) \frac {\zeta_p(2) L(1, \operatorname{sym}^2 f)}{\zeta_p(1)L_p(1, \operatorname{sym}^2 f) \zeta(2)} \\
& \hspace*{2cm} +O\Big(\int\limits_{1/2+\varepsilon-iT}^{1/2+\varepsilon+iT}\Big(\sum_{\substack{n \geq 1 \\ c|n}}\frac {|\lambda(n)|^2}{n^s}\Big) \frac{x^s}{s} \dif s+\int\limits_{1/2+\varepsilon \pm iT}^{1+1/\log x\pm iT}\Big(\sum_{\substack{n \geq 1 \\ c|n}}\frac {|\lambda(n)|^2}{n^s}\Big) \frac{x^s}{s} \dif s \Big).
\end{split}
\end{align}

Similar to \eqref{ResPerron1}, we estimate the $O$-term in \eqref{Perronprimeint1}, so that for $\sigma \geq 1/2+\varepsilon$,
\begin{align}
\label{dfactorboundfurthersimplified}
\begin{split}
  & \frac {1}{c^s}\prod_{\substack{ p|c \\ p^{\nu_p} \| c}} \Big ( \sum_{j=0}^{\infty} \frac{|\lambda(p^{\nu_p+j})|^2}{p^{js}} \Big ) \frac{\zeta_p(2s)}{\zeta_p(s)L_p(s, \operatorname{sym}^2 f)} \ll  \frac {c^{\varepsilon}}{c^{1/2}} \ll 1.
\end{split}
\end{align}

Hence from \eqref{Fbound1} and \eqref{dfactorboundfurthersimplified},
\begin{align}
\label{Intnewlineest}
\begin{split}
     \int\limits_{1/2+\varepsilon-iT}^{1/2+\varepsilon+iT}\big(\sum_{\substack{n \geq 1 \\ c|n}}\frac {|\lambda(n)|^2}{n^s}\big) \frac{x^s}{s} \dif s \ll x^{1/2+\varepsilon}T^{1+\varepsilon}.
\end{split}
\end{align}

   Similarly,
\begin{align}
\label{Intverlineest}
\begin{split}
    \int\limits_{1/2+\varepsilon \pm iT}^{1+1/\log x\pm iT}\big(\sum_{\substack{n \geq 1 \\ c|n}}\frac {|\lambda(n)|^2}{n^s}\big) \frac{x^s}{s} \dif s \ll \int\limits^{1+1/\log x}_{1/2+\varepsilon}\frac {X^{\sigma}T^{2(1-\sigma)+\varepsilon}}{T} \dif \sigma \ll x^{1/2}T^{\varepsilon}+\frac{xT^{\varepsilon}}{
T}.
\end{split}
\end{align}

   We conclude from \eqref{Perronprime}, \eqref{R1}, \eqref{Perronprimeint1}, \eqref{Intnewlineest} and \eqref{Intverlineest} that
\begin{align*}
%%\label{Perronprimeint}
   \sum_{\substack{n \leq x \\ c|n}} |\lambda(n)|^2= \frac x{c}\prod_{\substack{ p|c \\ p^{\nu_p} \| c}} \Big ( \sum_{j=0}^{\infty} \frac{|\lambda(p^{\nu_p+j})|^2}{p^{j}} \Big ) \frac {\zeta_p(2) L(1, \operatorname{sym}^2 f)}{\zeta_p(1)L_p(1, \operatorname{sym}^2 f) \zeta(2)} +O(x^{1/2+\varepsilon}T^{1+\varepsilon}+\frac{xT^{\varepsilon}}{
T}).
\end{align*}

Setting $T=x^{1/4}$ leads to \eqref{con0}, completing the proof of the lemma.
\end{proof}

\section{Outline of the proof of Theorem \ref{lowerboundsfixedmodmean}} \label{outline}

 As we mentioned in the Introduction, our proof of Theorem \ref{lowerboundsfixedmodmean} follows closely the treatments in \cite{Szab24}. We write $y=x^{1/C_0}$, where $C_0$ is a large absolute constant whose value depends on $k$ only, to be specified later. We then define a subdivision of the interval $[1,y]$ with $1=y_0<y_1<\ldots <y_M=y$  recursively by setting $y_M=y$ and $y_{m-1} =y_m^{1/20}$ for any $2\leq m\leq M$. We choose $M$ such that $y_1$ lies in $\big[y^{\frac{1}{20(\log \log y)^2}}, y^{\frac{1}{(\log \log y)^2}}\big]$. \newline

 We further define parameters $J_m$ for $1\leq m\leq M$ such that $J_1=(\log \log y)^{3/2}$, $J_M=\frac{C_0}{10^5k}$, and that $J_m=J_M+M-m$ for $2\leq m\leq M-1$. We take $C_0$ large enough to ensure that $J_M\geq \exp(10^4 k^2)$.  Using these notations, we have
\begin{equation}
\label{shortpolynomial}
    \prod_{m=1}^M y_m^{10^4kJ_m}<x.
\end{equation}
 
  For $1\leq m\leq M$ and any integers $|l|\leq (\log y)/2$, we define for any $\chi$ modulo $q$,   
\begin{align*}
    D_{m,l}(\chi)=& \sum_{y_{m-1}< p\leq y_m } \frac{(\alpha_p+\beta_p)\chi(p)}{p^{1/2+il/\log y}} +\frac{(\alpha^2_p+\beta^2_p)\chi(p)^2}{2p^{1+2il/\log y}}, \quad    R_{m,l}(\chi) = \bigg(\sum_{j=0 }^{J_m} \frac{(k-1)^j}{j!}\big(\Re D_{m,l}(\chi)\big)^j\bigg)^2 \quad \mbox{and} \\
    R(\chi)= & \sum_{|l|\leq (\log y)/2} \prod_{m=1}^M  R_{m,l}(\chi)= \sum_{|l|\leq (\log y)/2} \prod_{m=1}^M  \bigg(\sum_{j=0}^{ J_m} \frac{(k-1)^j}{j!}\bigg(\Re \sum_{y_{m-1}< p\leq y_m } \frac{(\alpha_p+\beta_p)\chi(p)}{p^{1/2+il/\log y}} +\frac{(\alpha^2_p+\beta^2_p)\chi(p)^2}{2p^{1+2il/\log y}} \bigg)^j\bigg)^2.
\end{align*}
 
  We also define the corresponding quantities for the random multiplicative function $f(n)$
\begin{align*}
    D_{m,l}(f)= &\sum_{y_{m-1}< p\leq y_m } \frac{(\alpha_p+\beta_p)f(p)}{p^{1/2+il/\log y}} +\frac{(\alpha^2_p+\beta^2_p)f(p)^2}{2p^{1+2il/\log y}},
\end{align*}
  and $R_{m,l}(f)$ and $R(f)$ in the same way based on $R_{m,l}(\chi)$ and $R(\chi)$, respectively. \newline

  We note that the quantity $R(\chi)$ is introduced to approximate $|\sum_{n\leq x} \chi(n)\lambda(n)|^{2k}$.  Hölder's inequality with exponents $k$ and $k/(k-1)$ reveals that
\begin{equation*}
%%\label{main_Hölder}
    \bigg(\frac{1}{\varphi(q)}\sum_{\substack{\chi \text{ mod }q\\ \chi\neq \chi_0}}\bigg|\sum_{n\leq x} \chi(n)\lambda(n)\bigg|^{2k}\bigg)^{1/k} \bigg( \frac{1}{\varphi(q)}\sum_{\substack{\chi \text{ mod }q\\ \chi\neq \chi_0}} R(\chi)^{\frac{k}{k-1}}\bigg)^{(k-1)/k}\geq \frac{1}{\varphi(q)} \sum_{\substack{\chi \text{ mod }q\\ \chi\neq \chi_0}} \bigg|\sum_{n\leq x}\chi(n)\lambda(n)\bigg|^2 R(\chi).
\end{equation*}
  
Consequently, to prove Theorem \ref{lowerboundsfixedmodmean}, it suffices to establish the following two propositions. 
\begin{proposition}
\label{p1}
 With the notation as above, for $C_0$ large enough, we have
\begin{equation}
\label{lowerbound}
    \frac{1}{\varphi(q)} \sum_{\substack{\chi \text{ mod }q\\ \chi\neq \chi_0}} \bigg|\sum_{n\leq x}\chi(n)\lambda(n)\bigg|^2 R(\chi)\gg_k x(\log y)^{k^2-1}.
\end{equation}
\end{proposition}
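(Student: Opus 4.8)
The plan is to expand the left-hand side using the orthogonality of Dirichlet characters and compare the resulting diagonal contribution with the corresponding expectation over a Steinhaus random multiplicative function $f(n)$. First I would open up $R(\chi)$ and $|\sum_{n\le x}\chi(n)\lambda(n)|^2$ into a single Dirichlet polynomial in $\chi$: because of the key size condition \eqref{shortpolynomial}, namely $\prod_{m=1}^M y_m^{10^4 k J_m}<x$, each monomial appearing in $|\sum_{n\le x}\chi(n)\lambda(n)|^2\,R(\chi)$ (after expanding the squares in $R_{m,l}$ and the products over $m$) has conductor strictly less than $q$ on each of the two "halves". Hence when we sum over all non-principal $\chi\bmod q$ and invoke orthogonality $\frac1{\varphi(q)}\sum_{\chi\ne\chi_0}\chi(a)\overline{\chi(b)}=\mathbf 1_{a\equiv b}-\frac1{\varphi(q)}$, only the truly diagonal terms $a=b$ survive, up to a negligible error coming from the $-1/\varphi(q)$ piece (which is controlled trivially by $\lambda(n)\ll n^\varepsilon$ and the length bound). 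This reduces the sum over characters to an honest main term: $\sum_{n\le x}|\lambda(n)|^2$-type sums weighted by the coefficients of $R$, which by multiplicativity and Lemma \ref{lemlambdasquaresum} (and Lemma \ref{lemlambdasquaresum1} for the cross terms) becomes $\gg x$ times the arithmetic factor $\prod_p(1+|\lambda(p)|^2/p+\cdots)$ times the ``expected size'' of $R$.

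Next I would pass to the random model: the diagonal just produced is, up to the constant $L(1,\operatorname{sym}^2 f)/\zeta(2)\prod_p(\cdots)\asymp 1$, comparable to $x\cdot\mathbb E\big[\,|\sum_{n\le x}f(n)\lambda(n)/\sqrt n|^{2}\cdot\text{(something)}\,\big]$; more precisely, following Szab\'o, the combinatorics of the diagonal match those of the random Euler product, so it suffices to bound from below the random quantity $\mathbb E\,R(f)$ (after factoring out the $\sum_{n\le x}|\lambda(n)|^2\asymp x$ that comes from the $|\sum\chi(n)\lambda(n)|^2$ factor via Lemma \ref{lemlambdasquaresum}). Here one uses that $R_{m,l}(f)$ for different $m$ involve disjoint sets of primes, hence are independent, so $\mathbb E\prod_m R_{m,l}(f)=\prod_m\mathbb E\,R_{m,l}(f)$. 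For each block, $\mathbb E\,R_{m,l}(f)=\mathbb E\big(\sum_{j\le J_m}\frac{(k-1)^j}{j!}(\Re D_{m,l}(f))^j\big)^2$ is, since $J_m$ is large, essentially the full exponential $\mathbb E\exp\big(2(k-1)\Re D_{m,l}(f)\big)$; by Lemma \ref{eulerproduct} (with $\alpha=\beta=k-1$, $t_1=t_2=l/\log y$) this equals $\exp\big((k-1)^2\sum_{y_{m-1}<p\le y_m}\lambda^2(p)/p+O(\cdots)\big)$. Multiplying over $m$ and using \eqref{merten1} (Mertens for $\sum\lambda^2(p)/p$) gives $\prod_m\mathbb E\,R_{m,l}(f)\gg(\log y)^{(k-1)^2}$ for each $l$, and summing the $\asymp\log y$ values of $l$ produces the claimed $(\log y)^{(k-1)^2+1}=(\log y)^{k^2-1}$ (after dividing through by the overlap factor and recombining with the $x$ from the $|\sum\chi(n)\lambda(n)|^2$ term, whose own diagonal contributes the remaining $\log y$ only through the $l$-sum, so the accounting is: $x$ from the length-$x$ sum, $(\log y)^{(k-1)^2}$ from one $l$, and $\log y$ from the number of $l$'s).

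The main obstacle, and where most of the work lies, is the \emph{truncation error}: replacing $\big(\sum_{j\le J_m}\frac{(k-1)^j}{j!}(\Re D_{m,l})^j\big)^2$ by $\exp(2(k-1)\Re D_{m,l})$ is only legitimate on the event that $|\Re D_{m,l}|$ is not too large compared with $J_m$, and one must show the tail — where the Taylor polynomial and the exponential diverge — contributes a lower-order amount. This is handled exactly as in Harper's and Szab\'o's work: one shows $\mathbb E\big(\sum_{j\le J_m}\frac{(k-1)^j}{j!}(\Re D_{m,l})^j\big)^2\ge c\,\exp\big((k-1)^2\sum_{y_{m-1}<p\le y_m}\lambda^2(p)/p\big)$ directly, by noting that the polynomial agrees with the exponential up to degree $J_m$ and invoking Lemma \ref{evenmoment} (with the set $\mathcal P$ of primes in $(y_{m-1},y_m]$) to bound the even moments $\mathbb E|D_{m,l}|^{2j}\ll j!\,(2\sum_{p}\lambda^2(p)/p+O(1))^j$, which forces the high-degree terms to be geometrically dominated since $J_m$ exceeds a large multiple of $\sum_{y_{m-1}<p\le y_m}\lambda^2(p)/p=\log\log y_m-\log\log y_{m-1}+O(1)=\log 20+O(1)$, a bounded quantity. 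A secondary technical point is verifying that after expanding $R(\chi)$ the relevant conductors genuinely stay below $q$ for $x\le q^{1/2}$; this is exactly the role of the somewhat baroque choice of $y_m$, $J_m$ and the bound \eqref{shortpolynomial}, and must be checked with the exponents bookkept carefully. With these in hand, assembling the pieces and choosing $C_0$ large enough (depending only on $k$) to absorb all the $O(\cdot)$ terms in Lemmas \ref{RS}, \ref{eulerproduct} yields \eqref{lowerbound}.
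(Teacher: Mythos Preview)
Your reduction to the Steinhaus model via orthogonality is correct, and so is the general plan of replacing each $R_{m,l}$ by the full exponential and controlling the truncation error. But the heart of your argument contains a genuine gap.

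You propose to ``factor out the $\sum_{n\le x}|\lambda(n)|^2\asymp x$'' and then bound $\mathbb{E}\,R(f)$ on its own. This is not possible: the random variables $\bigl|\sum_{n\le x}f(n)\lambda(n)\bigr|^2$ and $R(f)$ are \emph{not independent}, since $R(f)$ is built from $f(p)$ for $p\le y$ and those same primes appear in $\sum_{n\le x}f(n)\lambda(n)$. In fact the correlation is the entire point of the construction of $R$: it is designed so that $\bigl|\sum_{n\le x}f(n)\lambda(n)\bigr|^2 R(f)$ mimics $\bigl|\sum_{n\le x}f(n)\lambda(n)\bigr|^{2k}$. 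Your exponent bookkeeping confirms that something is missing: $(k-1)^2+1=k^2-2k+2$, which is \emph{not} $k^2-1$ for $k\ge 2$; you are short by $2k-3$ powers of $\log y$.

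The paper recovers the missing powers precisely from this correlation. After passing to the random model one writes $\mathbb{E}=\mathbb{E}\,\mathbb{E}^{(y)}$ and shows (Lemma~\ref{condition_y}, via a sieve lower bound and Parseval) that
\[
\mathbb{E}^{(y)}\Bigl|\sum_{n\le x}f(n)\lambda(n)\Bigr|^2 \;\gg\; \frac{x}{\log y}\int_{-1/2}^{1/2}|F_y(\tfrac12+\beta+it)|^2\,\dif t \;-\;(\text{error}),
\]
where $F_y(s)=\prod_{p\le y}\bigl|1-\alpha_p f(p)p^{-s}\bigr|^{-1}\bigl|1-\beta_p f(p)p^{-s}\bigr|^{-1}$. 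The crucial step is then to compute, via Lemma~\ref{eulerproduct},
\[
\mathbb{E}\,|F_y(\tfrac12+\beta+it)|^{2}\,|F_y(\tfrac12+il/\log y)|^{2(k-1)},
\]
which on the region $|t-l/\log y|\le 1/\log y$ produces a cross term $\exp\bigl(2(k-1)\sum_{p\le y}\lambda^2(p)\cos((t-l/\log y)\log p)/p\bigr)\asymp(\log y)^{2(k-1)}$. The total exponent is $1+(k-1)^2+2(k-1)=k^2$; after accounting for the measure $1/\log y$ of that region, the $x/\log y$ prefactor, and the sum over $\asymp\log y$ values of $l$, one lands exactly at $x(\log y)^{k^2-1}$. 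Without this conditional-expectation step and the cross term it yields, your argument cannot reach the required exponent.
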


\begin{proposition}
\label{p2}
   With the notation as above, for $C_0$ large enough, we have
\begin{equation*}
        \frac{1}{\varphi(q)}\sum_{\substack{\chi \text{ mod }q\\ \chi\neq \chi_0}} R(\chi)^{\frac{k}{k-1}}\ll_k (\log y)^{k^2+1}.
\end{equation*}
\end{proposition}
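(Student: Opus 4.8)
The plan is to follow the strategy of \cite{Szab24}. Set $\kappa=k/(k-1)$; the hypothesis $k\ge 2$ is precisely the statement $1<\kappa\le 2$, and this is what makes the argument work. Since $t\mapsto t^{\kappa-1}$ is subadditive on $[0,\infty)$ (as $0\le\kappa-1\le1$), we get $R(\chi)^{\kappa-1}\le\sum_{|l|\le(\log y)/2}\big(\prod_{m}R_{m,l}(\chi)\big)^{\kappa-1}$, hence
\begin{equation*}
R(\chi)^{\kappa}=R(\chi)\cdot R(\chi)^{\kappa-1}\le\sum_{|l_1|,|l_2|\le(\log y)/2}\Big(\prod_{m=1}^{M}R_{m,l_1}(\chi)\Big)\Big(\prod_{m=1}^{M}R_{m,l_2}(\chi)\Big)^{\kappa-1}.
\end{equation*}
Each $R_{m,l}(\chi)=\big(\sum_{j\le J_m}\frac{(k-1)^j}{j!}(\Re D_{m,l})^j\big)^2$ is a truncated exponential of ``rate'' $k-1$, and raising it to the power $\kappa-1$ turns the rate into $(k-1)(\kappa-1)=1$. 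I would prove pointwise bounds, valid for all $\chi$,
\begin{equation*}
R_{m,l}(\chi)\le(1+\varepsilon_m)e^{2(k-1)\Re D_{m,l}(\chi)}+P_m(\chi),\qquad R_{m,l}(\chi)^{\kappa-1}\le(1+\varepsilon_m')e^{2\Re D_{m,l}(\chi)}+P_m'(\chi),
\end{equation*}
where $\varepsilon_m,\varepsilon_m'$ are small and $P_m,P_m'$ are Dirichlet polynomials of degree $O_k(J_m)$ supported, after a harmless extra factor $(\Re D_{m,l}(\chi)/V_m)^{2N_m}$, on the rare range $\{|\Re D_{m,l}(\chi)|>V_m\}$: on $\{|\Re D_{m,l}(\chi)|\le V_m\}$ (with $V_m$ small relative to $J_m$) the truncated exponentials equal $e^{(k-1)\Re D_{m,l}(\chi)}$ and $e^{\Re D_{m,l}(\chi)}$ up to factors $1+o(1)$, while on the complement one uses the crude bounds $R_{m,l}(\chi)\le e^{2(k-1)|\Re D_{m,l}(\chi)|}\le e^{2(k-1)}(\Re D_{m,l}(\chi))^{2J_m}$ and $R_{m,l}(\chi)^{\kappa-1}\le1+R_{m,l}(\chi)$. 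The crucial point is that the sign of $\Re D_{m,l}$ is retained in the exponentials.

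Multiplying these bounds over $m$ and expanding, $R(\chi)^\kappa$ is dominated by a sum of honest Dirichlet polynomials in $\chi,\bar\chi$, each of length (separately in $\chi$ and in $\bar\chi$) at most $\prod_m y_m^{O_k(J_m)}<x<q$ by \eqref{shortpolynomial}, with all prime supports below $q$. Enlarging the character sum to all $\chi\bmod q$ (legitimate as $R(\chi)\ge0$), orthogonality collapses to the diagonal $A=B$, so $\frac{1}{\varphi(q)}\sum_{\chi\bmod q}[\,\cdot\,]$ equals exactly the expectation over the Steinhaus random multiplicative function $f$. By independence of $f$ across the disjoint blocks $(y_{m-1},y_m]$ the resulting expectations factor over $m$. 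Every term carrying a factor $P_m$ or $P_m'$ is negligible: the extra factor concentrates it on $\{|\Re D_{m,l_2}(f)|>V_m\}$, whose probability and moments against it are controlled by the high-moment estimate of Lemma~\ref{evenmoment} (with $a_p=\alpha_p+\beta_p$), and one checks that such a term is at most $e^{-c_kJ_m}$ times the size of the main term (and $o(1)$ times it when $m=1$); since the $J_m$ for $2\le m\le M-1$ are distinct integers $\ge J_M$, one has $\sum_m e^{-c_kJ_m}\le\sum_{j\ge J_M}e^{-c_kj}=O_k(1)$, small once $C_0$ is large, so all the correction terms together cost only a bounded factor even though $M\to\infty$; the same keeps $\prod_m(1+\varepsilon_m)(1+\varepsilon_m')$ bounded.

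The main term equals, up to a bounded factor, $\mathbb{E}_f\big[e^{2(k-1)\Re D^{(l_1)}(f)+2\Re D^{(l_2)}(f)}\big]$ with $D^{(l)}=\sum_m D_{m,l}$. By Lemma~\ref{eulerproduct} (with $\alpha=k-1$, $\beta=1$, $\sigma_1=\sigma_2=0$, $t_i=l_i/\log y$, applied to the primes in $(y_1,y]$; the first block of small primes handled by a direct Euler-product estimate) together with $\sum_{p\le y}\lambda^2(p)/p=\log\log y+O(1)$, this is
\begin{equation*}
\ll_k\exp\Big(\big((k-1)^2+1\big)\log\log y+2(k-1)\sum_{p\le y}\frac{\lambda^2(p)\cos\!\big((l_2-l_1)\tfrac{\log p}{\log y}\big)}{p}\Big).
\end{equation*}
Using $\lambda^2(p)=\lambda(p^2)+1$ and Lemmas~\ref{RS} and \ref{RS3}, the cosine sum is $\log\log y+O(1)$ when $l_1=l_2$ and $\le\log\!\big(\log y/|l_1-l_2|\big)+O(1)$ otherwise (the $\lambda(p^2)$-part being $O(1)$ in either case). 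Hence the $\ll\log y$ diagonal pairs contribute $\ll_k(\log y)^{(k-1)^2+1+2(k-1)}=(\log y)^{k^2}$ each, while for fixed $l_1$ the off-diagonal pairs contribute $\ll_k(\log y)^{(k-1)^2+1}\sum_{\Delta\ge1}(\log y/\Delta)^{2(k-1)}\ll_k(\log y)^{k^2}$, the series converging because $2(k-1)>1$. Summing over the $\ll\log y$ values of $l_1$ gives $\frac{1}{\varphi(q)}\sum_{\chi\ne\chi_0}R(\chi)^{\kappa}\ll_k(\log y)^{k^2+1}$, which is Proposition~\ref{p2}.

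The main obstacle is proving the pointwise majorizations above and, above all, the bookkeeping that keeps the cumulative error $O_k(1)$ even as the number of scales $M\to\infty$ — this is exactly what forces the recursive choices $y_{m-1}=y_m^{1/20}$, $J_m=J_M+M-m$, and the size $J_1=(\log\log y)^{3/2}$. The convexity reduction ($\kappa\le2$) and the convergence of the off-diagonal tail ($2(k-1)>1$) both fail for $k<2$, which is the source of the hypothesis $k\ge2$.
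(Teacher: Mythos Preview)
Your outline follows the paper's proof closely: the same subadditivity reduction to pairs $(l_1,l_2)$, the same replacement of the fractional power $R_{m,l_2}^{1/(k-1)}$ by a polynomial majorant split into ``small $|\Re D|$'' and ``large $|\Re D|$'' pieces, orthogonality to pass to the Steinhaus model, independence across the blocks $(y_{m-1},y_m]$, and the Euler-product computation via Lemmas~\ref{eulerproduct} and~\ref{RS3} yielding the $|l_1-l_2|^{-2(k-1)}$ off-diagonal decay. The paper organises the ``large'' regime by a dyadic partition $\mathcal X(n_1,\dots,n_M)$ rather than your single threshold $V_m$, but that is only packaging. There is, however, one genuine technical slip in your write-up.

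Your ``main term'' $e^{2(k-1)\Re D_{m,l_1}(\chi)}$ (and likewise $e^{2\Re D_{m,l_2}(\chi)}$) is \emph{not} a Dirichlet polynomial of length $y_m^{O_k(J_m)}$, so the claim that after expanding ``$R(\chi)^\kappa$ is dominated by a sum of honest Dirichlet polynomials \dots\ each of length at most $\prod_m y_m^{O_k(J_m)}$'' fails as written, and the orthogonality step cannot be applied to it. The paper avoids this by never majorising $R_{m,l_1}(\chi)$ at all (it is already a polynomial) and by replacing $R_{m,l_2}(\chi)^{1/(k-1)}$ by a genuinely polynomial majorant $U_{m,l_2}(\chi)$: the truncated exponential $\big(\sum_{j\le J_m}(\Re D_{m,l_2})^j/j!\big)^2$ on the small range and a normalised high power $|D_{m,l_2}/W_m|^{a_m}$ on the large range. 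Orthogonality is then applied to the polynomial $\prod_m R_{m,l_1}U_{m,l_2}$, and only \emph{after} passing to Steinhaus is $\mathbb{E}[R_{m,l_1}(f)U_{m,l_2}(f)]$ compared with $\mathbb{E}\,e^{2(k-1)\Re D_{m,l_1}(f)+2\Re D_{m,l_2}(f)}$ (this is Lemma~\ref{zerocase}, itself a nontrivial estimate). Your sentence ``the main term equals, up to a bounded factor, $\mathbb{E}_f[e^{\cdots}]$'' hides exactly this step; it is not automatic and is where the parameters $J_m$ earn their keep.

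A minor correction: the chain $R_{m,l}(\chi)\le e^{2(k-1)|\Re D_{m,l}(\chi)|}\le e^{2(k-1)}(\Re D_{m,l}(\chi))^{2J_m}$ is wrong in its second inequality, since exponential growth eventually beats any polynomial. The bound you need, $\big|\sum_{j\le J_m}\tfrac{(k-1)^j}{j!}x^j\big|\le e^{k-1}\max(1,|x|)^{J_m}$, follows directly term-by-term and does not pass through $e^{(k-1)|x|}$.
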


The remainder of the paper is devoted to the proofs of these propositions.

\section{Proof of Proposition \ref{p1}}

  We first show that adding $\big|\sum_{n\leq x}\chi_0(n) \lambda(n) \big|^2R(\chi_0)$ to the left-hand side of \eqref{lowerbound} does not affect the desired bound. Applying \eqref{alpha} and summing trivially, we see that
\begin{equation*}
    D_{m,l}(\chi_0)=\sum_{y_{m-1}< p\leq y_m } \frac{(\alpha_p+\beta_p)\chi_0(p)}{p^{1/2+il/\log y}} +\frac{(\alpha^2_p+\beta^2_p)\chi_0(p)^2}{2p^{1+2il/\log y}}\leq y_m.
\end{equation*}
   It follows that
\begin{equation*}
    R_{m,l}(\chi_0)=\bigg(\sum_{j=0 }^{J_m} \frac{(k-1)^j}{j!}\big(\Re D_{m,l}(\chi_0)\big)^j\bigg)^2\leq (ky_m)^{2J_m}.
\end{equation*}
   We then deduce from \cite[Theorem 2.3]{MVa1} and \eqref{shortpolynomial} that, as $x\leq q^{1/2}$,
\begin{equation*}
    \Big|\sum_{n\leq x}\chi_0(n)\lambda(n)\Big|^2R(\chi_0)\ll \Big|\sum_{n\leq x}d(n)\Big|^2R(\chi_0)\ll x^2(\log x)^2(\log y)\prod_{m=1}^M (ky_m)^{2J_m} <x^{2+1/10} \ll \varphi(q)x^{1/5}.
\end{equation*}

    It is thus enough to show that
\begin{equation*}
   \frac{1}{\varphi(q)} \sum_{\chi \text{ mod }q} \bigg|\sum_{n\leq x}\chi(n)\lambda(n) \bigg|^2 R(\chi)\gg_k x(\log y)^{k^2-1}.
\end{equation*}

  By expanding the brackets in the definition of $R(\chi)$ for any $\chi$ modulo $q$, we may write for some suitable complex coefficients $a_{n_1,n_2}$,
\begin{equation}
\label{expand}
    \bigg|\sum_{n\leq x}\chi(n)\lambda(n)\bigg|^2 R(\chi)=\sum_{|l|\leq (\log y)/2}\sum_{\substack{n_1,n_2\leq N \\ m_1,m_2\leq x}} a_{n_1,n_2}\frac{\chi(m_1n_1)\bar{\chi}(m_2n_2)}{n_1^{1/2+il/\log y}n_2^{1/2-il/\log y} }.
\end{equation}

By \eqref{shortpolynomial},
\begin{equation*}
    xN\leq x \prod_{m=1}^M y_m^{4J_m}\leq x^{3/2}<q.
\end{equation*}
Hence when averaging \eqref{expand} over $\chi$ modulo $q$, the orthogonality relation implies that the only non-zero contribution comes from the diagonal terms $m_1n_1=m_2n_2$. Thus,
\begin{equation*}
    \frac{1}{\varphi(q)}\sum_{\chi\, \text{mod}\, q}\bigg|\sum_{n\leq x}\chi(n)\lambda(n)\bigg|^2 R(\chi)=\mathbb{E}\sum_{|l|\leq \log y/2}\sum_{\substack{n_1,n_2\leq N \\ m_1,m_2\leq x}} a_{n_1,n_2}\frac{f(m_1n_1)\bar{f}(m_2n_2)}{n_1^{1/2+il/\log y}n_2^{1/2-il/\log y} }=\mathbb{E}\bigg|\sum_{n\leq x} f(n)\lambda(n) \bigg|^2 R(f).
\end{equation*}

Therefore it suffices to show that
\begin{equation*}
    \mathbb{E}\bigg|\sum_{n\leq x} f(n)\lambda(n)\bigg|^2 R(f)\gg_k x(\log y)^{k^2-1}.
\end{equation*}

  We deduce similar to the arguments used in the proof of \cite[Proposition 3.1]{Szab24} that in order to prove the above bound, it suffices to establish the following two propositions.
\begin{proposition}
\label{p3}
  With the notation as above, we have
\begin{equation*}
  \sum_{|l|\leq \log y/2} \mathbb{E}  \bigg|\sum_{n\leq x}f(n)\lambda(n)\bigg|^2\exp\bigg(2(k-1)\Re\sum_{p\leq y}\frac{(\alpha_p+\beta_p)f(p)}{p^{1/2+il/\log y}} +\frac{(\alpha^2_p+\beta^2_p)f(p)^2}{2p^{1+2il/\log y}} \bigg)\geq e^{O(k^2\log \log k)} x(\log y)^{k^2-1}. 
\end{equation*}
\end{proposition}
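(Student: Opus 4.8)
The plan is to reduce the lower bound to a comparison between $\mathbb{E}|\sum_{n\le x} f(n)\lambda(n)|^2$ weighted by the random Euler product and a cleaner ``model'' quantity, following the strategy of \cite[Proposition 3.2]{Szab24}. First I would observe that for each fixed $l$, the exponential factor is, by Lemma \ref{eulerproduct} (applied with $\alpha=k-1$, $\beta=0$, $\sigma_1=0$, $t_1=l/\log y$, and the prime ranges $z,y$), essentially the expectation of the random Euler product $\prod_{p\le y}|1-\alpha_p f(p)/p^{1/2+il/\log y}|^{-2(k-1)}|1-\beta_p f(p)/p^{1/2+il/\log y}|^{-2(k-1)}$, up to the error $O(\max(k-1,(k-1)^3)/z^{1/2})$ which is negligible once $z$ is a large power of $k$. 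The independence of the $f(p)$ across primes lets us factor the expectation $\mathbb{E}|\sum_{n\le x} f(n)\lambda(n)|^2 \cdot (\text{Euler product})$ as a sum over $n\le x$ of $|\lambda(n)|^2$ times a local product; expanding the Euler product into a Dirichlet series and using the orthogonality $\mathbb{E} f(n)\overline{f(m)} = \mathbf{1}(n=m)$ turns the whole thing into a Dirichlet-series manipulation.

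The key step is then to recognize the resulting sum as (a smoothed form of) $\sum_{n\le x}|\lambda(n)|^2 g_l(n)$, where $g_l$ is a multiplicative function whose Dirichlet series near $s=1$ behaves like $L(s,\operatorname{sym}^2 f)^{(k-1)^2}$ times analytic factors — more precisely one wants the mean value to be of size $x(\log y)^{(k-1)^2}$ for each $l$, coming from the pole structure $\zeta(s)$ (contributing the $x$) together with the twisted symmetric-square factor. Here I would invoke Lemma \ref{lemlambdasquaresum} (and Lemma \ref{lemlambdasquaresum1} to control the off-diagonal/cross terms generated by the $\alpha_p^2+\beta_p^2$ pieces and by the coprimality conditions among the $n_i$) to extract the main term $\gg x$ with the correct local Euler factors, and Lemma \ref{mertenstype}–\eqref{mertenstypesympower} to evaluate $\sum_{p\le y}\lambda^2(p)\cos(\cdots)/p$ and hence the exponent. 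For the central range $|l|\le \log y/2$ the diagonal $l$-term alone (i.e. $t_1=0$, where $\cos\equiv 1$) already contributes $\gg x(\log y)^{(k-1)^2}$ by Lemma \ref{merten1}; summing over all $l$ only helps, so one does not even need to analyze the decay in $l$ carefully for the lower bound — a crude positivity argument suffices. The constant $e^{O(k^2\log\log k)}$ absorbs the discrepancy between $(k-1)^2$ in the exponent produced by the Euler product and the target exponent $k^2-1 = (k-1)^2 + 2(k-1)$; the extra $(\log y)^{2(k-1)}$ and all $k$-dependent constants from Lemmas \ref{eulerproduct}, \ref{lemlambdasquaresum}, \ref{lemlambdasquaresum1} are of this shape, which is why the statement is phrased with that error factor rather than a sharp constant.

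The main obstacle I anticipate is bookkeeping the cross terms: when one expands $|\sum_{n\le x} f(n)\lambda(n)|^2$ against the Euler product, the diagonal condition $f(n)$ forced by orthogonality creates sums of the form $\sum_{m\le x}\sum_{n\le x}\mathbf{1}(mn_1 = \cdots)|\lambda(\cdot)\lambda(\cdot)|$ with $n_i$ ranging over $y$-smooth numbers, and one must show these are dominated by the main term rather than swamping it. This is exactly what Lemma \ref{lemlambdasquaresum1} with its $P_1, P_2$ bounds is designed for: the $P_2$ (the $\varepsilon$-shifted product) contributes the genuinely off-diagonal, error-type pieces which one bounds by $x^{1-\delta}$ after summing over the smooth $n_i\le N$ using \eqref{shortpolynomial} to keep $N$ a small power of $x$; the $P_1$ pieces assemble into the main Euler product. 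A secondary technical point is that the truncated series $\sum_{j\le J_m}$ defining $R_{m,l}$ must be shown to differ from the full exponential negligibly — but this was already absorbed in the reduction in Section \ref{outline} to Propositions \ref{p3} and (implicitly) its companion, so at the level of Proposition \ref{p3} one works directly with the exponential and the truncation issue does not arise.
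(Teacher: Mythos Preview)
There is a genuine gap. Your core claim is that the ``diagonal'' contribution is $\gg x(\log y)^{(k-1)^2}$ and that the discrepancy between $(k-1)^2$ and the target exponent $k^2-1=(k-1)^2+2(k-1)$ --- namely a factor $(\log y)^{2(k-1)}$ --- is absorbed into $e^{O(k^2\log\log k)}$. That cannot be right: $e^{O(k^2\log\log k)}$ is a constant in $y$, while $(\log y)^{2(k-1)}$ grows with $y$. What is actually happening is that you have implicitly treated $\big|\sum_{n\le x} f(n)\lambda(n)\big|^2$ and the Euler product weight $|F_y(1/2+il/\log y)|^{2(k-1)}$ as independent random variables and multiplied their expectations. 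They are not independent: both depend on $(f(p))_{p\le y}$, and it is precisely their correlation that produces the missing $(\log y)^{2(k-1)}$. Your orthogonality expansion does not cleanly yield $\sum_{n\le x}|\lambda(n)|^2 g_l(n)$ either, because for primes $p\le y$ the factor $\mathbb{E}\big[f(p)^a\overline{f(p)}^{a'}\cdot(\text{local Euler weight})\big]$ is not simply $\mathbf{1}(a=a')$.

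The paper extracts this correlation by conditioning. One writes $\mathbb{E}=\mathbb{E}\,\mathbb{E}^{(y)}$ and first bounds $\mathbb{E}^{(y)}\big|\sum_{n\le x}f(n)\lambda(n)\big|^2$ from below, as a random variable in $(f(p))_{p\le y}$, by $\frac{x}{\log y}\int_{-1/2}^{1/2}|F_y(1/2+\beta+it)|^2\,\dif t$ (minus a Rankin-type tail). This step uses a sieve lower bound for $\sum_{x/(r+1)<n\le x/r,\;P^-(n)>y}|\lambda(n)|^2$ --- this is where Lemma~\ref{lemlambdasquaresum} actually enters --- together with Parseval (Lemma~\ref{parseval}). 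Only after this does one take the outer expectation against $|F_y(1/2+il/\log y)|^{2(k-1)}$ via Lemma~\ref{eulerproduct}; the cross term $2(k-1)\sum_{p\le y}\lambda^2(p)\cos((t-l/\log y)\log p)/p^{1+\beta}$ in that lemma is $\approx \log\log y$ when $|t-l/\log y|\le 1/\log y$, and exponentiating it yields exactly the factor $(\log y)^{2(k-1)}$ you were missing. Your proposal bypasses conditioning, Parseval, and the sieve entirely, and so never sees this cross term. Lemma~\ref{lemlambdasquaresum1} is not used in this proposition at all; it belongs to the proof of the companion error-term proposition.
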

\begin{proposition}
\label{p4}
 With the notation as above, we set
\begin{equation*}
    \text{Err}_{m,l}(f) =\exp\Big(2(k-1)\Re D_{m,l}(f)\Big)-R_{m,l}(f)=\sum_{\substack{j_1,j_2\geq 0\\ \max(j_1,j_2)>J_m }} \frac{(k-1)^{j_1+j_2}}{j_1!j_2!}(\Re D_{m,l}(f))^{j_1+j_2}.
\end{equation*}
  We have for any $1\leq m\leq M$, 
\begin{equation*}
    \sum_{|l|\leq \log y/2} \mathbb{E}  \bigg|\sum_{n\leq x}f(n)\lambda(n)\bigg|^2\exp\Big(2(k-1)\sum_{\substack{m'=1\\ m'\neq m} }^M\Re D_{m',l}(f)\Big) |\text{Err}_{m,l}(f)|\leq e^{O(k^4)}e^{-J_m} \frac{\log x}{\log y}x(\log y)^{k^2-1}.
\end{equation*}
\end{proposition}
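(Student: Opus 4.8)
The plan is to adapt to the present setting the proof of the corresponding statement in \cite{Szab24}. Write $\mathcal{P}_m$ for the set of primes in $(y_{m-1},y_m]$, call an integer $\mathcal{P}_m$-smooth if all of its prime factors lie in $\mathcal{P}_m$, and for a $\mathcal{P}_m$-smooth integer $a$ set
\begin{equation*}
S_a=\sum_{\substack{b\le x/a\\ p\mid b\,\Rightarrow\, p\notin\mathcal{P}_m}}f(b)\lambda(b).
\end{equation*}
Since $f$ is completely multiplicative and $\lambda$ is multiplicative, factoring each $n\le x$ as $n=ab$ with $a$ its $\mathcal{P}_m$-part gives $\sum_{n\le x}f(n)\lambda(n)=\sum_{a\,\mathcal{P}_m\text{-smooth}}f(a)\lambda(a)S_a$. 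First I would record the pointwise bound on the error term: taking absolute values in the definition of $\text{Err}_{m,l}(f)$ and bounding the condition $\max(j_1,j_2)>J_m$ by $j_1>J_m$ or $j_2>J_m$ gives, with $\tau=(k-1)|\Re D_{m,l}(f)|$,
\begin{equation*}
 |\text{Err}_{m,l}(f)|\le 2\Big(\sum_{j>J_m}\frac{\tau^{\,j}}{j!}\Big)e^{\tau}\le 2\,\frac{\tau^{\,J_m}}{J_m!}\,e^{2\tau}=:G_m(f),
\end{equation*}
the last step using $\tau^{J_m+i}/(J_m+i)!\le(\tau^{J_m}/J_m!)(\tau^i/i!)$; note that $G_m(f)$ depends only on $(f(p))_{p\in\mathcal{P}_m}$, and that it suffices to bound $\sum_{|l|\le\log y/2}\mathbb{E}\big[|\sum_{n\le x}f(n)\lambda(n)|^2\,G_m(f)\,e^{2(k-1)\sum_{m'\ne m}\Re D_{m',l}(f)}\big]$.

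Next I would decouple the $m$-th block. The quantities $S_a$ and $e^{2(k-1)\sum_{m'\ne m}\Re D_{m',l}(f)}$ depend only on $(f(p))_{p\notin\mathcal{P}_m}$, hence are independent of $G_m(f)$; conditioning on $(f(p))_{p\notin\mathcal{P}_m}$ reduces the problem to bounding $\mathbb{E}_{\mathcal{P}_m}\big[|\sum_{a\,\mathcal{P}_m\text{-smooth}}f(a)\lambda(a)S_a|^2\,G_m(f)\big]$, where $\mathbb{E}_{\mathcal{P}_m}$ is expectation over $(f(p))_{p\in\mathcal{P}_m}$ with the $S_a$ frozen. Writing $D_{m,l}(f)=\sum_{p\in\mathcal{P}_m}a_p f(p)p^{-1/2}+a_{p^2}f(p^2)p^{-1}$ with $a_p=\lambda(p)p^{-il/\log y}$ and $a_{p^2}=\tfrac12(\lambda(p^2)-1)p^{-2il/\log y}$ (using $\alpha_p+\beta_p=\lambda(p)$ and $\alpha_p^2+\beta_p^2=\lambda(p^2)-1$), I would expand $G_m(f)$ into its power series in $|\Re D_{m,l}(f)|$, bound $|\Re D_{m,l}(f)|\le|D_{m,l}(f)|$, apply Lemma \ref{evenmoment} with $\mathcal{P}=\mathcal{P}_m$ and $c_n=\lambda(n)S_n$ for $\mathcal{P}_m$-smooth $n\le x$ (so $\tilde d(n)=d(n)$), use the Cauchy--Schwarz inequality on the odd moments, and re-sum, to obtain
\begin{equation*}
 \mathbb{E}_{\mathcal{P}_m}\Big[\Big|\sum_{a\,\mathcal{P}_m\text{-smooth}}f(a)\lambda(a)S_a\Big|^2 G_m(f)\Big]\ll\Big(\sum_{\substack{a\le x\\ a\,\mathcal{P}_m\text{-smooth}}}d(a)\lambda^2(a)|S_a|^2\Big)\cdot\frac{(k-1)^{J_m}}{J_m!}\sum_{i\ge0}\frac{(2(k-1))^i}{i!}\,\Gamma\big(\tfrac{J_m+i}{2}+1\big)(2V_m)^{(J_m+i)/2},
\end{equation*}
where $V_m:=2\sum_{p\in\mathcal{P}_m}\lambda^2(p)/p+O(1)$. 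By Lemma \ref{RS}, $V_m\ll1$ for $2\le m\le M$ and $V_1\ll\log\log y$; since $J_m\ge\exp(10^4k^2)$ for $2\le m\le M$ and $J_1=(\log\log y)^{3/2}$, the ratio $(k-1)^2V_m/J_m$ is below any prescribed constant once $C_0$ and $x$ are large, and a Stirling-type estimate bounds the double sum by $\big(C(k-1)^2 V_m e/J_m\big)^{J_m/2}\ll e^{-J_m}$ with much room to spare. Thus the displayed expectation is $\ll e^{-J_m}\sum_{a\le x,\ a\,\mathcal{P}_m\text{-smooth}}d(a)\lambda^2(a)|S_a|^2$.

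Putting this together with the first step and summing over $l$, I would reduce the proposition to the estimate
\begin{equation*}
 \sum_{\substack{a\le x\\ a\,\mathcal{P}_m\text{-smooth}}}d(a)\lambda^2(a)\,\mathbb{E}\Big[|S_a|^2\sum_{|l|\le\log y/2}e^{2(k-1)\sum_{m'\ne m}\Re D_{m',l}(f)}\Big]\ll e^{O(k^4)}\,\frac{\log x}{\log y}\,x(\log y)^{k^2-1}.
\end{equation*}
This I would establish exactly as Proposition \ref{p3} is proved: here $\sum_{a\,\mathcal{P}_m\text{-smooth}}d(a)\lambda^2(a)|S_a|^2$ plays the role of $|\sum_{n\le x}f(n)\lambda(n)|^2$, and the product over $m'\ne m$ differs from the product over all blocks only in $\mathcal{P}_m$. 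The inputs are Lemma \ref{eulerproduct}, which evaluates the random Euler product and supplies the correct power of $\log y$ (a crude moment bound would overshoot the exponent); Lemma \ref{parseval}, which handles the length-$x$ sums $S_a$; and Lemmas \ref{lemlambdasquaresum1} and \ref{lemlambdasquaresum}, which control the mean values of the $\lambda$-weights. Deleting the block $\mathcal{P}_m$ shifts the exponent of $\log y$ by $O(\sum_{p\in\mathcal{P}_m}\lambda^2(p)/p)$ and introduces a factor $\sum_{a\,\mathcal{P}_m\text{-smooth}}d(a)\lambda^2(a)a^{-1}\ll\exp\big(O(\sum_{p\in\mathcal{P}_m}\lambda^2(p)/p)\big)$, which is $e^{O(1)}$ for $2\le m\le M$ and is comfortably absorbed for $m=1$ by the very strong saving of the decoupling step; the factor $\log x/\log y$ emerges, as in \cite{Szab24}, from the disparity between the length $x$ of $\sum_{n\le x}f(n)\lambda(n)$ and the length $y$ of the Dirichlet polynomials building the Euler products. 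Combining with the $e^{-J_m}$ saving then yields the proposition.

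The step I expect to be the main obstacle is the last one: carrying out the Proposition \ref{p3}-type evaluation of $\sum_a d(a)\lambda^2(a)\,\mathbb{E}[|S_a|^2\sum_{l}e^{2(k-1)\sum_{m'\ne m}\Re D_{m',l}(f)}]$ with the block $\mathcal{P}_m$ removed, while keeping precise track of the power of $\log y$ (for which the Euler-product evaluation of Lemma \ref{eulerproduct} is essential), of the extra factor $\log x/\log y$, and of the dependence on $k$. A secondary delicate point is the block $m=1$, where $V_1$ is of size $\log\log y$ rather than $O(1)$, so that the saving $e^{-J_m}$ in the decoupling step relies on $J_1=(\log\log y)^{3/2}$ being large compared with $V_1$, and hence on $x$ being sufficiently large in terms of $k$.
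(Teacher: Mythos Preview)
Your overall strategy --- bound $|\text{Err}_{m,l}|$ pointwise, decouple the $m$-th block via conditioning, apply Lemma~\ref{evenmoment}, then bound the remaining weighted second moment --- matches the paper's (and Szab\'o's). Your organization of the first two steps differs only cosmetically: you separate the primes in $I_m=(y_{m-1},y_m]$, whereas the paper conditions on the complementary set $\mathcal{P}_m=\{p\le y:\,p\notin I_m\}$ (beware that your $\mathcal{P}_m$ is the paper's $I_m$), so that the paper's analogue of your $S_a$ is supported on primes in $\mathcal{P}_m$ only and the primes $>y$ are instead grouped with $I_m$ in the outer sum. The paper also keeps the double sum over $j_1,j_2$ rather than passing through your dominant $G_m$; both routes lead to the same Lemma~\ref{evenmoment} input and the same combinatorial tail estimate.

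The genuine gap is in your last step. You propose to bound
\[
\sum_{a\ I_m\text{-smooth}}d(a)\lambda^2(a)\,\mathbb{E}\Big[|S_a|^2\sum_{|l|\le(\log y)/2}e^{2(k-1)\sum_{m'\ne m}\Re D_{m',l}(f)}\Big]
\]
``exactly as Proposition~\ref{p3} is proved'', invoking Lemmas~\ref{eulerproduct}, \ref{parseval}, \ref{lemlambdasquaresum1}, \ref{lemlambdasquaresum}. But Proposition~\ref{p3} is a \emph{lower} bound, and three of the four tools you list (Parseval, the Euler-product identity, the sieve input Lemma~\ref{lemlambdasquaresum}) are used in the paper only for that direction. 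The quantity above is not a pure Euler-product moment --- $S_a$ is a partial sum of length $x/a$ --- so Lemma~\ref{eulerproduct} does not apply directly, and you yourself note that a crude decoupling between $|S_a|^2$ and the exponential overshoots the exponent of $\log y$.

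What the paper actually does here is the content of Lemma~\ref{hardestlemma}: it expands the exponential weight via Taylor as a genuine Dirichlet polynomial $\sum_v h(v)f(v)v^{-1/2-it}$ supported on $\mathcal{P}_m$-integers (with $h$ multiplicative, $h(p)=(k-1)\lambda(p)$, $h(p^2)=\tfrac12 k(k-1)(\alpha_p^2+\beta_p^2)+(k-1)^2$), then uses orthogonality of $f$ to turn $\mathbb{E}\bigl[|\sum_l f(l)\lambda(l)|^2\,|\sum_v h(v)f(v)v^{-1/2}|^2\bigr]$ into a shifted-convolution-type sum $\sum_u\bigl|\sum_{u=lv}\lambda(l)h(v)v^{-1/2}\bigr|^2$, and bounds this by Lemma~\ref{lemlambdasquaresum1} followed by three multiplicative sums over $d,w_1,w_2$. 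This is the substantive technical work; it produces the factor $(\log y)^{k^2-2}$ with the correct exponent, and the factor $\log x/\log y$ then arises from $\prod_{y<p\le x}(1-\lambda^2(p)/p)^{-1}$ in the final sum over $n$ (the primes in $(y,x]$ which in the paper's decomposition sit alongside $I_m$). Your sketch does not supply this argument or a working substitute for it.
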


   In the remainder of this section, we prove these two propositions. 

\subsection{Proof of Proposition \ref{p3}}
Note that by \eqref{alpha},
\begin{equation*}
    \frac{(\alpha_p+\beta_p)f(p)}{p^{1/2+il/\log y}} +\frac{(\alpha^2_p+\beta^2_p)f(p)^2}{2p^{1+2il/\log y}}=-\log\Big(1-\frac{\alpha_pf(p)}{p^{1/2+il/\log y}}\Big)-\log\Big(1-\frac{\beta_pf(p)}{p^{1/2+il/\log y}}\Big)+O\big(p^{-3/2}\big).
\end{equation*}

  As $\sum_{p}p^{-3/2}\ll 1$,
\begin{align}
\label{appending}
\begin{split}
    \sum_{|l|\leq \log y/2} & \mathbb{E} \bigg|\sum_{n\leq x}f(n)\lambda(n)\bigg|^2 \exp\bigg(2(k-1) \Re \sum_{p\leq y } \frac{(\alpha_p+\beta_p)f(p)}{p^{1/2+2il/\log y}} +\frac{(\alpha^2_p+\beta^2_p)f(p)^2}{2p^{1+il/\log y}}\bigg) \\
    % \geq & e^{O(k)}  \sum_{|l|\leq \log y/2}\mathbb{E} \bigg|\sum_{n\leq x}f(n)\bigg|^2  \exp\bigg(2(k-1) \Re \sum_{p<y } \sum_{v=1}^{\infty} \frac{f(p)^v}{vp^{v(1/2+il/\log y)}} \bigg)  \\
     =& e^{O(k)}  \sum_{|l|\leq \log y/2}  \mathbb{E} \bigg|\sum_{n\leq x}f(n)\lambda(n)\bigg|^2 \prod_{p\leq y}\bigg| 1-\frac{\alpha_pf(p)}{p^{1/2+il/\log y}}\bigg|^{-2(k-1)}\bigg| 1-\frac{\beta_pf(p)}{p^{1/2+il/\log y}}\bigg|^{-2(k-1)}, \\
=& e^{O(k)}  \sum_{|l|\leq \log y/2}  \mathbb{E}\mathbb{E}^{(y)} \bigg|\sum_{n\leq x}f(n)\lambda(n)\bigg|^2|F_y(1/2+il/\log y)|^{2(k-1)},
\end{split}
\end{align}
 where $\mathbb{E}^{(y)}$ denotes the conditional expectation with respect to $(f(p))_{p\leq y}$ and
\begin{equation*}
    F_y(s)=\prod_{p\leq y}\Big|1-\frac{\alpha_pf(p)}{p^s}\Big|^{-1}\Big|1-\frac{\beta_pf(p)}{p^s}\Big|^{-1}.
\end{equation*}

   The next lemma allows us to estimate $\mathbb{E}^{(y)} \bigg|\sum_{n\leq x}f(n)\lambda(n)\bigg|^2$ from below. 
\begin{lemma}
\label{condition_y}
    With the notation as above, assume that $y<x^{1/10}$ is large. Then, for any $\beta>0$, 
     \begin{equation*}
        \mathbb{E}^{(y)} \bigg|\sum_{n\leq x}f(n)\lambda(n)\bigg|^2\gg \frac{x}{\log y} \bigg[  \int\limits_{-1/2}^{1/2} |F_{y}(1/2+\beta+it)|^2 \dif t-x^{-\beta/4} \int\limits_{-\infty}^{\infty} |F_{y}(1/2+\beta/2+it)|^2 \frac{\dif t}{1/4+t^2}\bigg],
    \end{equation*}
where the implied constant is absolute.
\end{lemma}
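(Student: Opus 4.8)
The plan is to condition on the values $(f(p))_{p\le y}$, split the sum according to the $y$-smooth part, and reduce the conditional second moment to a Parseval-type integral of the Euler product $F_y$, for which Lemma~\ref{parseval} is tailor-made. Throughout, for an integer $m$ let $P^{+}(m)$ and $P^{-}(m)$ denote its largest and smallest prime factors (with $P^{+}(1)=1$, $P^{-}(1)=\infty$), and set $A_y(z):=\sum_{a\le z,\,P^{+}(a)\le y}f(a)\lambda(a)$. Since $f$ is completely multiplicative and $\sum_{j\ge0}\lambda(p^j)t^j=(1-\alpha_pt)^{-1}(1-\beta_pt)^{-1}$, the Dirichlet series of $A_y$ is $\prod_{p\le y}(1-\alpha_pf(p)p^{-s})^{-1}(1-\beta_pf(p)p^{-s})^{-1}$, whose modulus on any vertical line is exactly $F_y$; as this is a finite product of factors each analytic (and bounded) for $\Re(s)>0$, Lemma~\ref{parseval} applies to $a_n=f(n)\lambda(n)\mathbf 1(P^{+}(n)\le y)$ for any $\sigma>0$.

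First I would factor each $n\le x$ as $n=ab$ with $a$ its $y$-smooth part and $b$ its $y$-rough part; as $(a,b)=1$, $\sum_{n\le x}f(n)\lambda(n)=\sum_{P^{-}(b)>y,\,b\le x}f(b)\lambda(b)\,A_y(x/b)$. Because the $f(p)$ with $p>y$ are i.i.d.\ uniform on the unit circle and independent of the conditioning, $\mathbb E^{(y)}[f(b_1)\overline{f(b_2)}]=\mathbf 1(b_1=b_2)$ for $y$-rough $b_1,b_2$, and expanding the square gives the exact identity
\begin{equation*}
\mathbb E^{(y)}\Big|\sum_{n\le x}f(n)\lambda(n)\Big|^2=\sum_{\substack{b\le x\\ P^{-}(b)>y}}\lambda(b)^2\,|A_y(x/b)|^2 .
\end{equation*}
Using $(b/x)^{2\beta}\le1$ and then discarding the non-negative terms with $b<X_1':=x/X_1$, where $X_1:=\sqrt{x}\,y^{-c}$ for a suitable absolute constant $c$, the right-hand side is $\ge x^{-2\beta}\sum_{X_1'\le b\le x,\,P^{-}(b)>y}\lambda(b)^2 b^{2\beta}|A_y(x/b)|^2$.

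Next I would pass from this sum to an integral of $|A_y|^2$. The function $b\mapsto|A_y(x/b)|^2$ is constant between consecutive values $b=x/a$ with $a$ $y$-smooth, and for $b\ge X_1'$ these are spaced by at least $x/X_1^2=y^{2c}$, which (for $c$ fixed large enough, using $\log y<\tfrac1{10}\log x$) exceeds the maximal gap $\ll y^{2+\varepsilon}$ between consecutive $y$-rough integers, e.g.\ by the Jacobsthal-type bound for $\prod_{p\le y}p$. Hence, slice-by-slice on the intervals of constancy and using the non-negativity of $|A_y|^2$, the arithmetic estimate $\sum_{b\le u,\,P^{-}(b)>y}\lambda(b)^2 b^{2\beta}\asymp u^{1+2\beta}/\log y$ — which one proves exactly as Lemmas~\ref{lemlambdasquaresum1}--\ref{lemlambdasquaresum} from Perron's formula applied to $\sum_{P^{-}(b)>y}\lambda(b)^2 b^{-(s-2\beta)}$ together with the analytic behaviour of $\zeta(s-2\beta)L(s-2\beta,\operatorname{sym}^2 f)$ and Mertens' theorem (Lemma~\ref{RS}) — yields $\sum_{X_1'\le b\le x,\,P^{-}(b)>y}\lambda(b)^2 b^{2\beta}|A_y(x/b)|^2\gg\frac1{\log y}\int_{X_1'}^{x}v^{2\beta}|A_y(x/v)|^2\,dv$, and after the substitution $z=x/v$,
\begin{equation*}
\mathbb E^{(y)}\Big|\sum_{n\le x}f(n)\lambda(n)\Big|^2\gg \frac{x}{\log y}\int_1^{X_1}\frac{|A_y(z)|^2}{z^{2+2\beta}}\,dz .
\end{equation*}
Writing $\int_1^{X_1}=\int_1^{\infty}-\int_{X_1}^{\infty}$, Lemma~\ref{parseval} with $\sigma=1/2+\beta$ gives $\int_1^{\infty}|A_y(z)|^2z^{-2-2\beta}\,dz=\tfrac1{2\pi}\int_{-\infty}^{\infty}F_y(1/2+\beta+it)^2|1/2+\beta+it|^{-2}\,dt\gg\int_{-1/2}^{1/2}F_y(1/2+\beta+it)^2\,dt$ for $\beta\le1$, while for $z\ge X_1$ one has $z^{-2-2\beta}\le X_1^{-\beta}z^{-2-\beta}\le x^{-\beta/4}z^{-2-\beta}$ (since $X_1^{-\beta}\le x^{-\beta/4}$), and Lemma~\ref{parseval} with $\sigma=1/2+\beta/2$ bounds $\int_{X_1}^{\infty}|A_y(z)|^2z^{-2-2\beta}\,dz$ by $x^{-\beta/4}\cdot\tfrac1{2\pi}\int_{-\infty}^{\infty}F_y(1/2+\beta/2+it)^2(1/4+t^2)^{-1}\,dt$. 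Subtracting gives the stated inequality for $\beta\le1$; for $\beta>1$ it is immediate, because then $\int_{-1/2}^{1/2}F_y(1/2+\beta+it)^2\,dt\ll1$ whereas $\mathbb E^{(y)}|\sum_{n\le x}f(n)\lambda(n)|^2\ge\sum_{x/2<b\le x,\,P^{-}(b)>y}\lambda(b)^2\gg x/\log y$, using that $A_y\equiv1$ on $[1,2)$.

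The step I expect to be the main obstacle is the third paragraph: the passage from the discrete sum over $y$-rough $b$ to the integral of $|A_y|^2$. One has to be sure the $y$-rough integers ``resolve'' the step function $|A_y(x/b)|^2$ finely enough, which is exactly what forces the truncation to $b\ge X_1'$ — there $A_y$ varies slowly in $b$ relative to the spacing of the $y$-rough integers — and one must then keep control of the loss through the Jacobsthal-type bound and the error term in the estimate for $\sum_{b\le u,\,P^{-}(b)>y}\lambda(b)^2 b^{2\beta}$, in the spirit of Lemma~\ref{lemlambdasquaresum} and of the arguments of \cite{Szab24} and \cite{Harper}.
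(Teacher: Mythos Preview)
Your overall strategy matches the paper's: factor $n$ into $y$-smooth and $y$-rough parts, use orthogonality to get $\mathbb{E}^{(y)}\big|\sum_{n\le x}f(n)\lambda(n)\big|^2=\sum_{P^-(b)>y,\,b\le x}\lambda(b)^2|A_y(x/b)|^2$, group by the value of $A_y(x/b)$, pass to an integral of $|A_y(z)|^2 z^{-2-2\beta}$, and apply Lemma~\ref{parseval} together with a Rankin-type splitting. Your insertion of $(b/x)^{2\beta}$ before the slicing, instead of after, is a harmless reordering.

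The gap is exactly where you flagged it. To pass from the discrete sum to the integral you need, on each slice $I$ of constancy of $b\mapsto A_y(x/b)$, a \emph{lower bound} for $\sum_{b\in I,\,P^-(b)>y}\lambda(b)^2 b^{2\beta}$. The Jacobsthal-type bound only guarantees that each slice contains at least one $y$-rough integer; it says nothing about the weighted count. The long-interval asymptotic $\sum_{b\le u,\,P^-(b)>y}\lambda(b)^2\asymp u/\log y$ that you would get from Perron's formula carries an error term of power-of-$u$ size, which is hopeless on slices of length only $y^{2c}$ located at height $u\ge\sqrt{x}\,y^c$. Global partial summation against $|A_y(x/\cdot)|^2$ does not save you either: the error term then gets multiplied by the total variation of $|A_y|^2$, which you have no control over. (There is also a small tension in your parameters: you want $c$ ``large enough'' so that $y^{2c}$ beats the Jacobsthal gap, but you also need $X_1^{-\beta}=x^{-\beta/2}y^{c\beta}\le x^{-\beta/4}$, which forces $c\log y\le\tfrac14\log x$.)

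The paper resolves this by two changes. First, it restricts much more severely, to $r:=\lfloor x/b\rfloor\le x^{1/10}$ (equivalently $b>x^{9/10}$), so that each slice $(x/(r+1),x/r]$ has length $\gg x/r^2\ge x^{4/5}$ rather than $y^{O(1)}$. Second --- and this is the missing ingredient --- it does not try to estimate the sifted sum directly: it applies a \emph{lower-bound linear sieve} (the beta sieve, \cite[Theorem~11.13]{FI10}) with level $D=x^{6/25}$, feeding in Lemma~\ref{lemlambdasquaresum} for the congruence sums $\sum_{n\in I,\,d\mid n}\lambda(n)^2$. This yields $\sum_{n\in I,\,P^-(n)>y}\lambda(n)^2\gg |I|/\log y$ for each such interval $I$, after which the conversion to $\frac{x}{\log y}\int_1^{x^{1/10}}|A_y(t)|^2t^{-2}\,dt$ and the Parseval/Rankin step proceed exactly as you described. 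Your Perron-plus-Jacobsthal combination cannot substitute for the sieve here.
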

\begin{proof}
  Denote $P^{-}(n)$ and $P^{+}(n)$ be the smallest and largest prime factor of a positive integer $n$. We then have
\begin{equation*}
    \mathbb{E}^{(y)} \bigg|\sum_{n\leq x}f(n)\lambda(n)\bigg|^2=\mathbb{E}^{(y)} \bigg|\sum_{\substack{n\leq x\\ P^{-}(n)>y } }f(n) \lambda(n)\sum_{\substack{m\leq x/n \\ P^{+}(m)\leq y }}f(m) \lambda(m) \bigg|^2.
\end{equation*}
Set
\[ c_n= \sum_{\substack{m\leq x/n \\ P^+(m)\leq y } }f(m)\lambda(m)\]
so that $c_n$ is determined by $(f(p))_{p\leq y}$. Therefore it may be regarded as a fixed quantity when taking $\mathbb{E}^{(y)}$. Furthermore, for $n_1, n_2$ with $P^{-}(n_1n_2)>y$, the orthogonality relation gives that $\mathbb{E}^{(y)} f(n_1)\overline{f(n_2)}=\mathbf{1}(n_1=n_2)$. It follows that
\begin{equation*}
    \mathbb{E}^{(y)} \bigg|\sum_{\substack{n\leq x\\ P^{-}(n)>y } }f(n) \lambda(n)c_n \bigg|^2=\sum_{\substack{n\leq x \\ P^{-}(n)>y }} |\lambda(n)c_n|^2.
\end{equation*}

  As only a lower bound is required, we may restrict the range of summation to $x^{9/10} <n\leq x$, so that
\begin{equation}
\label{con}
     \mathbb{E}^{(y)} \bigg|\sum_{n\leq x}f(n)\lambda(n)\bigg|^2\geq  \sum_{\substack{x^{9/10} <n\leq x\\ P^-(n)>y } }\bigg|\lambda(n)  \sum_{\substack{m\leq x/n \\ P^+(m)\leq y }}f(m)\lambda(m)  \bigg|^2\geq  \sum_{r\leq x^{1/10} }\bigg|  \sum_{\substack{m\leq r \\ P^+(m)\leq y }}f(m)\lambda(m) \bigg|^2  \sum_{\substack{\frac{x}{r+1}<n\leq \frac{x}{r} \\ P^-(n)>y}} |\lambda(n)|^2,
\end{equation}
where the last inequality comes by the substitution $r=\big\lfloor \frac{x}{n}\rfloor$ with $\lfloor n \rfloor = \max \{ m \in \intz : m \leq n \}$ denoting the floor of $n$. \newline

  We now deal with the sum
\begin{equation*}
     \sum_{\substack{\frac{x}{r+1}<n\leq \frac{x}{r} \\ P^-(n)>y}}   |\lambda(n)|^2
\end{equation*}
using sieve method. For our situation, we apply the lower bound part of \cite[Theorem 12.5]{FI10} for the sequence $\mathcal{A}=\{n: \frac{x}{r+1}<n\leq \frac{x}{r} \}$ with $z:=y<x^{1/10}, D=x^{6/25}$ and $X=L(1, \operatorname{sym}^2 f)\zeta^{-1}(2)(x/r-x/(r+1)) \gg x/r^2$.  By Lemma \ref{lemlambdasquaresum}, we have
\begin{align}
\label{Ad}
\begin{split}
    &  \sum_{\substack{n \in \mathcal{A} \\ d|n}} |\lambda(n)|^2=g(d)X+O(x^{3/4+\varepsilon}),
\end{split}
\end{align}
  where
\begin{align*}
%%\label{gd}
\begin{split}
    &  g(d)=\frac 1{d}\prod_{\substack{ p|d \\ p^{\nu_p} \|d}} \Big ( \sum_{j=0}^{\infty} \frac{|\lambda(p^{\nu_p+j})|^2}{p^{j}} \Big ) \frac{\zeta_p(2)}{\zeta_p(1)L_p(1, \operatorname{sym}^2 f)}.
\end{split}
\end{align*}

Computation similar to that in \eqref{dfactorboundsimplified1} reveals $g(p) =\lambda(p)^2/p+O(1/p^2)$.  So by \eqref{merten1}, the sieve dimension in our case (see \cite[(5.35)]{FI10}) is $\kappa=1$.
We now apply \cite[ Theorem 11.13]{FI10} to see that
\begin{equation*}
     \sum_{\substack{\frac{x}{r+1}<n\leq \frac{x}{r} \\ P^-(n)>y}}   |\lambda(n)|^2 \gg XV(z)(f(s)+O((\log D)^{-1/6}))+R(D,z),
\end{equation*}
  Here by \cite[(5.36)]{FI10} and $\kappa=1$ in our case,
\begin{align*}
%%\label{Vz}
\begin{split}
    &  V(z)=\prod_{p \leq z}(1-g(p)) \gg \frac 1{\log z} = \frac 1{\log y}.
\end{split}
\end{align*}

  Also, by \eqref{Ad} and the expression for $R(D,z)$ given on \cite[p. 207]{FI10},
\begin{align*}
%%\label{RDz}
\begin{split}
    &  R(D,z) \ll x^{3/4+\varepsilon}\prod_{d | \mathcal{P}(z)}1.
\end{split}
\end{align*}
Since $2 \leq z \leq \sqrt{D}$ and $\kappa=1$, we can apply \cite[Lemma 12.3]{FI10}, getting
\begin{align*}
%%\label{RDzest}
\begin{split}
    &  R(D,z) \ll x^{3/4+\varepsilon}\prod_{d | \mathcal{P}(z)}1 \ll x^{3/4+\varepsilon}D \ll x^{99/100+\varepsilon}.
\end{split}
\end{align*}

   Moreover, we have $s=\log D/\log y>2$, so $f(s)\gg 1$, where $f$ is the function defined by delayed differential equations in \cite[(12.1)--(12.2)]{FI10}. We then conclude that, as $r \leq x^{1/10}$,
\begin{equation*}
     \sum_{\substack{\frac{x}{r+1}<n\leq \frac{x}{r} \\ P^-(n)>y}}   |\lambda(n)|^2 \gg \frac{X}{\log y} \gg \frac{x}{r^2\log y}.
\end{equation*}

The above and \eqref{con} render that
\begin{equation*}
    \mathbb{E}^{(y)} \bigg|\sum_{n\leq x}f(n)\lambda(n)\bigg|^2 \gg
    \frac{x}{\log y} \sum_{r\leq x^{1/10} } \frac{1}{r^2} \bigg|  \sum_{\substack{m\leq r \\ P^+(m)\leq y }}f(m)\lambda(m) \bigg|^2 \gg \frac{x}{\log y} \int\limits_1^{x^{1/10}}\bigg|\sum_{\substack{m\leq t \\ P^+(m)\leq y } }f(m)\lambda(m) \bigg|^2 \frac{\dif t}{t^2}.
\end{equation*}

   We aim to apply Lemma \ref{parseval} to estimate the above integral. To that end, we first need to extend the range of integration $(1, \infty)$. For this, we apply a Rankin-type trick.  For any $0<\beta<1/10$,
\begin{equation*}
\begin{split}
        \int\limits_1^{x^{1/10}}\bigg|\sum_{\substack{m\leq t \\ P^+(m)<y } }f(m)\lambda(m) \bigg|^2 \frac{\dif t}{t^2} \geq  & \int\limits_1^{\infty} \bigg|\sum_{\substack{m\leq t \\ P^+(m)<y } }f(m)\lambda(m) \bigg|^2 \frac{\dif t}{t^{2+2\beta}}- \int\limits_{x^{1/10}}^{\infty} \bigg|\sum_{\substack{m\leq t \\ P^+(m)<y } }f(m)\lambda(m) \bigg|^2 \frac{\dif t}{t^{2+2\beta} } \\
       \geq & \int\limits_1^{\infty} \bigg|\sum_{\substack{m\leq t \\ P^+(m)<y } }f(m)\lambda(m) \bigg|^2 \frac{\dif t}{t^{2+2\beta}}- x^{-\beta/10} \int\limits_{x^{1/10}}^{\infty} \bigg|\sum_{\substack{m\leq t \\ P^+(m)<y } }f(m)\lambda(m) \bigg|^2 \frac{\dif t}{t^{2+\beta} }\\
     \geq & \frac{1}{2\pi} \bigg(\int\limits_{-1/2}^{1/2} |F_y(1/2+\beta+it)|^2 dt-x^{-\beta/10} \int\limits_{-\infty}^{\infty} |F_y(1/2+\beta/2+it)|^2 \frac{\dif t}{1/4+t^2} \bigg),
\end{split}
\end{equation*}
 where the last estimation above follows from Lemma \ref{parseval}. This completes the proof of the lemma.
\end{proof}

  Now, by \eqref{appending} and Lemma \ref{condition_y}, we have
\begin{equation}
\label{Elowerboundint}
\begin{split}
     \mathbb{E} \bigg|\sum_{n\leq x} & f(n)\lambda(n)\bigg|^2  \sum_{|l|\leq \log y/2} |F_{y}( \tfrac{1}{2}+il/\log y)|^{2(k-1)} \\
     & = \mathbb{E} \mathbb{E}^{(y)}\bigg|\sum_{n\leq x}f(n)\lambda(n)\bigg|^2 \sum_{|l|\leq \log y/2} |F_{y}(\tfrac{1}{2}+il/\log y)|^{2(k-1)} \\
    & \gg  \frac{x}{\log y}\sum_{|l|\leq \log y/2}\bigg[ \int\limits_{-1/2}^{1/2}\mathbb{E}|F_{y}(\tfrac{1}{2}+\beta+it)|^{2}  |F_{y}(\tfrac{1}{2}+il/\log y)|^{2(k-1)} \dif t \\
    & \hspace*{2cm} -x^{-\beta/4} \int\limits_{-\infty}^{\infty}\mathbb{E}|F_{y}(\tfrac{1}{2}+\beta/2+it)|^{2}|F_{y}(\tfrac{1}{2}+il/\log y)|^{2(k-1)} \frac{\dif t}{1/4+t^2}\bigg]. 
\end{split}
\end{equation}
 We now set $\beta= C/\log y$ for a sufficiently large constant $C$ to be specified later. Note that for the rest of the argument, our implied constants must be independent of $C$. \newline

  We now treat the first integral in \eqref{Elowerboundint} by noting first that we have the trivial bound
\begin{equation*}
    \prod_{p\leq 200k^2}\Big| 1-\frac{\alpha_pf(p)}{p^{1/2+\beta+it}}\Big|^{-2}\Big| 1-\frac{\beta_pf(p)}{p^{1/2+\beta+it}}\Big|^{-2}\Big| 1-\frac{\alpha_pf(p)}{p^{1/2+\beta+it}}\Big|^{-2(k-1)}\Big| 1-\frac{\beta_pf(p)}{p^{1/2+\beta+it}}\Big|^{-2(k-1)}=e^{O(k^2)}.
\end{equation*}
 We then apply Lemma \ref{eulerproduct} with $\sigma_1=\beta$, $\sigma_2=0$, $t_1=t,t_2=\frac{l}{\log y}$ and $z=200k^2$ together with \eqref{alphalambdarel}, \eqref{merten1}.  This leads to
\begin{equation*}
\begin{split}
    \mathbb{E}|F_{y} & (\tfrac{1}{2}+\beta+it)|^{2}  |F_{y}(\tfrac{1}{2}+il/\log y)|^{2(k-1)} \\
    & =  \exp\bigg(\sum_{200k^2<p\leq y}\frac{\lambda^2(p)}{p^{1+2\beta }}+\frac{(k-1)^2\lambda^2(p)}{p}+\frac{2(k-1)\lambda(p^2)\cos\big((t-l/\log y)\log p\big)}{p^{1+\beta}}+O(k^2)\bigg) \\
    & =  \exp\bigg(\sum_{p\leq y}\frac{\lambda^2(p)}{p^{1+2\beta }}+\frac{(k-1)^2\lambda^2(p)}{p}+\frac{2(k-1)\lambda^2(p)\cos\big((t-l/\log y)\log p\big)}{p^{1+\beta}}+O(k^2\log\log k)\bigg).
\end{split}
\end{equation*}

Using $\sum_{p>y} p^{-1-1/\log y} \ll 1$ and $\log \zeta(1+s)=-\log s +O(1)$ for $s\ll 1$, we see by \eqref{lambdabound}, \eqref{F1} and \eqref{Frel1} that, for $C\geq 1/2$,
\begin{equation}
\label{sum1}
\begin{split}
    \sum_{p\leq y} \frac{\lambda^2(p)}{p^{1+2\beta} } \geq &\sum_{p} \frac{\lambda^2(p)}{p^{1+2\beta} } -\sum_{p\geq y} \frac{\lambda^2(p)}{p^{1+1/\log y}}  = \log F(1+2\beta)+O(1) \\
   =&\log \zeta(1+2\beta)+\log L(1+2\beta, \operatorname{sym}^2 f)-\log \zeta(2+4\beta)+O(1)  \\
%  = & \log \frac{\log y}{2C}+O(1) \\
    = & \log\log y-\log C+O(1).
\end{split}
\end{equation}

  Also, by \eqref{merten1},
\begin{equation}
\label{sum2}
\begin{split}
    \sum_{p\leq y} \frac{(k-1)^2\lambda^2(p)}{p } \geq &  (k-1)^2\log\log y +O(1).
\end{split}
\end{equation}
Similarly, using $\Re \log \zeta(1+s)=- \log |s|+O(1)$ for $s\ll 1$, we see that when $\big|t-l/\log y\big|\leq 1/\log y$ and $C\geq 3$, 
\begin{equation}
\label{sum3}
\begin{split}
   \sum_{p\leq y}  \frac{\lambda^2(p)\cos\big((t-l/\log y)\log p\big)}{p^{1+\beta}} &\geq \Re \log \zeta\big(1+\beta+ (t-l/\log y)i\big)+O(1) \\
   &= -\log |\beta +(t-l/\log y)i | +O(1) \\
   &=\log\log y-\frac{1}{2}\log\big(C^2+ (t\log y-l)^2\big)+O(1) \\
   &=\log\log y-\log C+O(1).
\end{split}
\end{equation}

  It follows from \eqref{sum1}--\eqref{sum3} that
\begin{align}
\label{positive_expected}
\begin{split}
   \int\limits_{-1/2}^{1/2} & \mathbb{E}  |F_y( \tfrac{1}{2}+\beta+it)|^{2}  |F_y(\tfrac{1}{2}+il/\log y)|^{2(k-1)} \dif t \\
& \geq \int\limits_{-1/2}^{1/2} \mathbb{E}|F_y( \tfrac{1}{2}+\beta+it)|^{2}  |F_y( \tfrac{1}{2}+il/\log y)|^{2(k-1)} \mathbf{1} \Big( \big|t-\frac{l}{\log y}\big|\leq \frac{1}{\log y} \Big) \dif t \geq e^{O(k^2\log \log k)} (\log y)^{k^2-1}C^{1-2k}.
\end{split}
\end{align}

For the second integral in \eqref{Elowerboundint}, we assume that $l=0$, as the other cases can be treated similarly. For any $t\in \mathbb{R}$, we apply Lemma \ref{eulerproduct} and arguing as above to see that
\begin{align}
\label{SecondEupperbound}
\begin{split}
   & \mathbb{E}|F_y(\tfrac{1}{2}+\tfrac{\beta}{2}+it)|^{2}|F_y(\tfrac{1}{2})|^{2(k-1)} \leq  (\log y)^{(k-1)^2+1}\exp\bigg(2(k-1) \sum_{p\leq y}\frac{\lambda^2(p)\cos(t\log p)}{p^{1+\beta/2}}+O(k^2\log \log k)\bigg) .
\end{split}
\end{align}

By \eqref{sumlambdapsquare} and Lemma \ref{RS3} and keeping in mind that the last estimation given in \eqref{mertenstypesympower} equals $O(1)$ when $|\alpha| \leq e^e$, we see that
\begin{equation*}
\begin{split}
    \int\limits_{-\infty}^{\infty} \exp & \bigg(2(k-1) \sum_{p\leq y}\frac{\lambda^2(p)\cos(t\log p)}{p^{1+\beta/2}}\bigg)\frac{\dif t}{t^2+1/4} = \int\limits_{-\infty}^{\infty} \exp\bigg(2(k-1) \sum_{p\leq y}\frac{(\lambda(p^2)+1)\cos(t\log p)}{p^{1+\beta/2}}\bigg)\frac{\dif t}{t^2+1/4} \\
    \leq &e^{O(k)}\bigg[\int\limits_0^{1/\log y} (\log y)^{2(k-1)} \dif t+\int\limits_{1/\log y}^{10} t^{-2(k-1)} \dif t+\int\limits_{10}^{\infty} \frac{(\log t)^{8(k-1)} }{t^2} \dif t \bigg] \leq e^{O(k)}\big( (\log y)^{2k-3}+1\big).
\end{split}
\end{equation*}

 As $k\geq 2$, we have $(\log y)^{2k-3} \gg 1$ so that the above and \eqref{SecondEupperbound} that
\begin{align}
\label{secondintegral}
\begin{split}
    \int\limits_{-\infty}^{\infty}\mathbb{E}|F_y(\tfrac{1}{2}+ \tfrac{\beta}{2}+it)|^{2}|F_y(\tfrac{1}{2}+il/\log y)|^{2(k-1)} \frac{\dif t}{1/4+t^2}\leq e^{O(k^2\log \log k)}  (\log y)^{k^2-1}.
\end{split}
\end{align}

   We deduce from \eqref{positive_expected} and \eqref{secondintegral} that
\begin{equation*}
\begin{split}
    \int\limits_{-1/2}^{1/2}  \mathbb{E}|F_y(\tfrac{1}{2}+\beta+it)|^2 & |F_y( \tfrac{1}{2}+  il/\log y)|^{2(k-1)} \dif t -x^{-\beta/10}  \int_{-\infty}^{\infty}\mathbb{E}|F_y(\tfrac{1}{2}+\tfrac{\beta}{2}+it)|^2|F_y(\tfrac{1}{2}+il/\log y)|^{2(k-1)} \frac{\dif t}{1/4+t^2} \\
    \geq & (\log y)^{k^2-1}(C^{1-2k}e^{O(k^2\log \log k)}-e^{-C/10} e^{O(k^2\log\log k)} ).
\end{split}
\end{equation*}
  We now choose $C$ to be a large multiple of $k^2\log\log k$.  The assertion of Proposition \ref{p3} follows from the above.

\subsection{Proof of Proposition \ref{p4}}

We denote $I_m=(y_{m-1}, y_m]$ and $\mathcal{P}_m=\{p\leq y: \,  p\not\in I_m\}$ for $1\leq m\leq M$. We first establish the following result. 
\begin{lemma}
\label{hardestlemma}
   For any $t\in \mathbb{R}$ and $j\in \mathbb{N}$, we have
    \begin{equation}
\label{E1}
    \begin{split}
       \mathbb{E}  \bigg|\sum_{n\leq x} & f(n) \lambda(n)\bigg|^2 \bigg|\exp\bigg( (k-1)\sum_{p\in \mathcal{P}_m} \frac{(\alpha_p+\beta_p)f(p)}{p^{1/2+it}}+\frac{(\alpha^2_p+\beta^2_p)f(p)^2}{2p^{1+2it} }\bigg)  \bigg|^2 \bigg| \sum_{p\in I_m} \frac{(\alpha_p+\beta_p)f(p)}{p^{1/2+it}}+\frac{(\alpha^2_p+\beta^2_p)f(p)^2}{2p^{1+2it} }\bigg|^{2j} \\
        \leq & e^{O(k^4)}x(\log y)^{k^2-2}j! \bigg(4\sum_{p\in I_m} \frac{2}{p}+\frac{3}{p^2}\bigg)^j \prod_{p\in I_m}\bigg(1-\frac{\lambda^2(p)}{p}\bigg)^{-2}\frac{\log x}{\log y}.
    \end{split}
    \end{equation}
\end{lemma}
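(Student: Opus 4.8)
The plan is to split the Dirichlet polynomial $\sum_{n\le x} f(n)\lambda(n)$ according to whether the prime factors of $n$ lie in $I_m$ or in $\mathcal{P}_m$, and to expose the factor $\big|\sum_{p\in I_m}(\cdots)\big|^{2j}$ together with the short Euler product over $\mathcal{P}_m$ in a form amenable to Lemma~\ref{evenmoment}. First I would write $n=ab$ uniquely with $P^+(a)\in I_m\cup\{1\}$ having all prime factors in $I_m$ and $b$ having all prime factors outside $I_m$ (in fact I will be a little more careful: since $f$ is completely multiplicative, factor $n = a b$ where $a$ is the $I_m$-part and $b$ is the $\mathcal{P}_m$-part, so that $f(n)=f(a)f(b)$ and $\lambda(n)$ is controlled by $|\lambda(a)\lambda(b)|$ up to divisor-bounded factors via \eqref{lambdabound}). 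Then
\[
\Big|\sum_{n\le x}f(n)\lambda(n)\Big|^2 \le \sum_{\substack{a_1,a_2\\ P^+(a_i)\in I_m}} \Big(\text{something}\Big),
\]
and the key point is that the primes in $I_m$ appearing in $f(a_1)\overline{f(a_2)}$ are independent of $(f(p))_{p\in\mathcal{P}_m}$, while the factor $\big|\exp((k-1)\sum_{p\in\mathcal P_m}\cdots)\big|^2$ depends only on $(f(p))_{p\in\mathcal P_m}$ and $\big|\sum_{p\in I_m}\cdots\big|^{2j}$ depends only on $(f(p))_{p\in I_m}$.

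Next I would take the conditional expectation over $(f(p))_{p\in\mathcal P_m}$ first. Applying Lemma~\ref{evenmoment} with $\mathcal{P}=\mathcal P_m$, with the sequence $c_n$ being the coefficients of $\sum_{n\le x}f(n)\lambda(n)$ restricted appropriately, and with $a_p=(k-1)(\alpha_p+\beta_p)$, $a_{p^2}=\tfrac{1}{2}(k-1)(\alpha_p^2+\beta_p^2)$ — so that $|a_p|^2 \le (k-1)^2\lambda^2(p)$ after using $(\alpha_p+\beta_p)^2=\lambda^2(p)$ and $|\alpha_p^2+\beta_p^2|\le 2$ — produces, after summing the resulting geometric-type series in $j$ (the $\mathbb{E}$ of the exponential of the $\mathcal P_m$-sum expands into exactly the shape Lemma~\ref{evenmoment} bounds), a factor
\[
\exp\Big(O(k^2)\sum_{p\in\mathcal P_m}\frac{\lambda^2(p)}{p}\Big) = (\log y)^{O(k^2)},
\]
but one has to be careful to extract precisely $(\log y)^{(k-1)^2}$ from the main term and keep the $\prod_{p\in I_m}(1-\lambda^2(p)/p)^{-2}$ correction that arises because the product over $\mathcal P_m$ is missing the interval $I_m$; this is where Lemma~\ref{eulerproduct} (or a direct Mertens estimate via \eqref{merten1}) enters. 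The residual factor $\big(\sum_{n\le x, \, P^+(n)\in I_m} |\lambda(n)c_n'|^2\big)$ times $\sum_{a\le x}\tilde d(a)|\lambda(a)|^2$-type sums is then handled by Lemma~\ref{lemlambdasquaresum1} and Lemma~\ref{lemlambdasquaresum} to yield the $x\cdot\frac{\log x}{\log y}$ and the divisor corrections, with the $\tilde d(n)$ in Lemma~\ref{evenmoment} accounting for the factor $\log x/\log y$ (the number of $\mathcal P_m$-divisors is on average $\asymp \log x/\log y$ since the missing interval $I_m$ is short).

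The remaining factor $\big|\sum_{p\in I_m}\cdots\big|^{2j}$ is then bounded by taking expectation over $(f(p))_{p\in I_m}$: this is a straightforward moment computation for a sum of independent mean-zero terms, giving $j!\big(4\sum_{p\in I_m}(2/p + 3/p^2)\big)^j$ by the same combinatorial input as in Lemma~\ref{evenmoment} (indeed one can apply Lemma~\ref{evenmoment} a second time, now with $\mathcal{P}=I_m$ and the $c_n$ trivial). I expect the \textbf{main obstacle} to be bookkeeping: correctly decoupling the three ingredients (the square of the long sum, the $\mathcal P_m$-Euler product, the $I_m$-sum) across the nested conditional expectations while tracking the $\lambda(n)$-weights through the factorization $n=ab$ — the divisor bound \eqref{lambdabound} prevents $\lambda$ from being multiplicative-friendly, so one must absorb the cross terms $|\lambda(a_1 n')\lambda(a_2 n')|$ using Lemma~\ref{lemlambdasquaresum1} and verify that the resulting arithmetic factors $P_1, P_2$ contribute only $O(1)^{\omega(\cdot)}$, which is harmless after the $\tilde d(n)$ weighting. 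The powers of $\log y$ must be matched exactly: $(k-1)^2$ from the $\mathcal P_m$-product main term, minus the $\prod_{p\in I_m}(1-\lambda^2(p)/p)^{-2}\asymp(\log y_m/\log y_{m-1})^2$ correction pulled out explicitly, which is why the statement carries $(\log y)^{k^2-2}$ rather than $(\log y)^{k^2-1}$.
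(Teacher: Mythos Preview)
Your overall architecture (factor $n$ by prime support, condition, invoke Lemma~\ref{evenmoment}) is right, but you have the two applications swapped, and this creates a genuine gap in the treatment of the exponential factor. In the paper one conditions on $(f(p))_{p\in\mathcal P_m}$ so that the exponential is frozen, and Lemma~\ref{evenmoment} is applied \emph{once}, with $\mathcal P=I_m$, to control the coupling of $\big|\sum_{n\le x}f(n)\lambda(n)\big|^2$ with the $2j$-th power $\big|\sum_{p\in I_m}(\cdots)\big|^{2j}$; this produces the factor $j!\big(4\sum_{p\in I_m}2/p+3/p^2\big)^j$ together with $\sum_n\tilde d(n)|\lambda(n)c_n|^2$, where $c_n=\sum_{l\le x/n,\;p\mid l\Rightarrow p\in\mathcal P_m}f(l)\lambda(l)$. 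Your plan to instead apply Lemma~\ref{evenmoment} with $\mathcal P=\mathcal P_m$ to the exponential does not work: the lemma bounds $2j$-th moments, not exponentials, and expanding $e^{2(k-1)\Re X}$ term by term and summing the resulting bounds costs extra constants in the exponent (one gets at best $(\log y)^{C(k-1)^2}$ with $C>1$) and, worse, the weight $\tilde d(n)$ with $\mathcal P=\mathcal P_m$ is essentially the full divisor function on $y$-smooth parts, which blows up the diagonal sum. The paper's actual device for the outer expectation $\mathbb E\,|c_n|^2\,|\exp((k-1)\sum_{p\in\mathcal P_m}\cdots)|^2$ is to Taylor-expand each local factor and write the exponential as a genuine Dirichlet polynomial $\sum_v h(v)f(v)v^{-1/2-it}$ with multiplicative $h$ ($h(p)=(k-1)\lambda(p)$, $h(p^2)=\tfrac12k(k-1)(\alpha_p^2+\beta_p^2)+(k-1)^2$), then use orthogonality $\mathbb E f(u_1)\overline{f(u_2)}=\mathbf 1(u_1=u_2)$ on the product $\big(\sum_l f(l)\lambda(l)\big)\big(\sum_v h(v)f(v)v^{-1/2-it}\big)$. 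The diagonal $l_1v_1=l_2v_2$ is unfolded via $v_i=dw_i$, and Lemma~\ref{lemlambdasquaresum1} bounds $\sum_m|\lambda(w_1m)\lambda(w_2m)|$; the sums over $d,w_1,w_2$ then assemble precisely to $(\log y)^{(k-1)^2}\cdot(\log y)^{k-1}\cdot(\log y)^{k-2}=(\log y)^{k^2-2}$.

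Two smaller corrections. First, your decomposition $n=ab$ into $I_m$-part and $\mathcal P_m$-part omits primes $p\in(y,x]$, which belong to neither set (recall $\mathcal P_m\subset\{p\le y\}$). Second, the factor $\log x/\log y$ does \emph{not} come from $\tilde d$ counting $\mathcal P_m$-divisors; in the paper $\tilde d$ counts $I_m$-supported divisors and is responsible for $\prod_{p\in I_m}(1-\lambda^2(p)/p)^{-2}$, while $\log x/\log y$ arises at the very end from $\prod_{y<p\le x}(1-\lambda^2(p)/p)^{-1}$ when one evaluates $\sum_{n\le x,\;p\mid n\Rightarrow p\notin\mathcal P_m}\tilde d(n)|\lambda(n)|^2/n$, since ``$p\notin\mathcal P_m$'' allows both $p\in I_m$ and $p>y$.
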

\begin{proof}
We denote by $\mathbb{E}^{(\mathcal{P}_m)}$ the conditional expectation with respect to primes in $\mathcal{P}_m$. The tower rule $\mathbb{E}=\mathbb{E}\mathbb{E}^{(\mathcal{P}_m)}$ leads to that the expectation in \eqref{E1} equals to
    \begin{equation}
\label{E1cond}
    \begin{split}
        \mathbb{E}\mathbb{E}^{(\mathcal{P}_m)} & \bigg|\sum_{n\leq x}f(n)\lambda(n)\bigg|^2 \bigg|\exp\bigg( (k-1)\sum_{p\in \mathcal{P}_m} \frac{(\alpha_p+\beta_p)f(p)}{p^{1/2+it}}+\frac{(\alpha^2_p+\beta^2_p)f(p)^2}{2p^{1+2it} }\bigg)  \bigg|^2 \\
        & \hspace*{4cm} \times \bigg| \sum_{p\in I_m} \frac{(\alpha_p+\beta_p)f(p)}{p^{1/2+it}}+\frac{(\alpha^2_p+\beta^2_p)f(p)^2}{2p^{1+2it} }\bigg|^{2j} \\
        =&\mathbb{E}\bigg|\exp\bigg( (k-1)\sum_{p\in \mathcal{P}_m} \frac{(\alpha_p+\beta_p)f(p)}{p^{1/2+it}}+\frac{(\alpha^2_p+\beta^2_p)f(p)^2}{2p^{1+2it} }\bigg)  \bigg|^2 \\
        & \hspace*{3cm} \times \mathbb{E}^{(\mathcal{P}_m)}  \bigg|\sum_{n\leq x}f(n)\lambda(n)\bigg|^2 \bigg| \sum_{p\in I_m} \frac{(\alpha_p+\beta_p)f(p)}{p^{1/2+it}}+\frac{(\alpha^2_p+\beta^2_p)f(p)^2}{2p^{1+2it} }\bigg|^{2j}.
    \end{split}
    \end{equation}
Applying the same trick introduced at the beginning of the proof of Lemma \ref{condition_y}, we arrive at
\begin{equation*}
\begin{split}
\mathbb{E}^{(\mathcal{P}_m)} &  \bigg|\sum_{n\leq x}f(n)\lambda(n)\bigg|^2 \bigg| \sum_{p\in I_m} \frac{(\alpha_p+\beta_p)f(p)}{p^{1/2+it}}+\frac{(\alpha^2_p+\beta^2_p)f(p)^2}{2p^{1+2it}} \bigg|^{2j}\\
& = \mathbb{E}^{(\mathcal{P}_m)}\bigg|\sum_{\substack{n\leq x \\p|n \implies p\not\in \mathcal{P}_m}}f(n)\lambda(n)\sum_{\substack{l\leq x/n \\p|l \implies p\in \mathcal{P}_m}} f(l)\lambda(l)\bigg|^2\bigg| \sum_{p\in I_m} \frac{(\alpha_p+\beta_p)f(p)}{p^{1/2+it}}+\frac{(\alpha^2_p+\beta^2_p)f(p)^2}{2p^{1+2it} }\bigg|^{2j}. 
\end{split}
\end{equation*}
  We write
\begin{equation*}
\Tilde{d}(n)=\sum_{d|n} \mathbf{1}(p|d \implies p\in I_m) \; \mbox{and} \quad    c_n=\sum_{\substack{l\leq x/n \\p|l \implies p\in \mathcal{P}_m}} f(l)\lambda(l). 
\end{equation*}
Because of the condition on primes in $\mathcal{P}_m$, we may regard $c_n$ as a fixed quantity.  Lemma \ref{evenmoment}, together with the bounds $|\alpha_p+\beta_p| \leq 2, |\alpha^2_p+\beta^2_p| \leq 2$ which follow from \eqref{alpha}, gives that
\begin{align*}
   \mathbb{E}^{(\mathcal{P}_m)}\bigg| & \sum_{\substack{n\leq x \\p|n \implies p\not\in \mathcal{P}_m}}f(n)  \lambda(n)c_n\bigg|^2\bigg|\sum_{p\in I_m} \frac{(\alpha_p+\beta_p)f(p)}{p^{1/2+it}}+\frac{(\alpha^2_p+\beta^2_p)f(p)^2}{2p^{1+2it} }\bigg|^{2j} \\
& \ll  \bigg(\sum_{\substack{n\leq x \\p|n \implies p\not\in \mathcal{P}_m}}\Tilde{d}(n) |\lambda(n)c_n|^2\bigg) j! \bigg(4\sum_{p\in I_m}\frac{2}{p}+\frac{3}{p^2}\bigg)^j.
\end{align*}
  We deduce from \eqref{E1cond} and the above that 
\begin{align}
\label{E11}
\begin{split}
    \mathbb{E} \bigg|\sum_{n\leq x} & f(n)\lambda(n)\bigg|^2 \bigg|\exp\bigg( (k-1)\sum_{p\in \mathcal{P}_m} \frac{(\alpha_p+\beta_p)f(p)}{p^{1/2+it}}+\frac{(\alpha^2_p+\beta^2_p)f(p)^2}{2p^{1+2it} }\bigg)  \bigg|^2 \\
    & \hspace*{3cm} \times \bigg| \sum_{p\in I_m} \frac{(\alpha_p+\beta_p)f(p)}{p^{1/2+it}}+\frac{(\alpha^2_p+\beta^2_p)f(p)^2}{2p^{1+2it} }\bigg|^{2j} \\
& \ll j! \bigg(4\sum_{p\in I_m}\frac{2}{p}+\frac{3}{p^2}\bigg)^j\sum_{\substack{n\leq x \\p|n \implies p\not\in \mathcal{P}_m}}\Tilde{d}(n)|\lambda(n)|^2\mathbb{E} \bigg|\sum_{\substack{l\leq x/n \\p|l \implies p\in \mathcal{P}_m}} f(l)\lambda(l)\bigg|^2 \\
& \hspace*{3cm} \times \bigg|\exp\bigg( (k-1)\sum_{p\in \mathcal{P}_m} \frac{(\alpha_p+\beta_p)f(p)}{p^{1/2+it}}+\frac{(\alpha^2_p+\beta^2_p)f(p)^2}{2p^{1+2it} }\bigg)  \bigg|^2.
\end{split}
\end{align}

   Note that trivially,  
\begin{equation}
\label{e1}
    \bigg|\exp\bigg( (k-1)\sum_{\substack{p\in \mathcal{P}_m \\ p\leq 10k^2}} \frac{(\alpha_p+\beta_p)f(p)}{p^{1/2+it}}+\frac{(\alpha^2_p+\beta^2_p)f(p)^2}{2p^{1+2it}}\bigg)  \bigg|^2=e^{O(k^2)}.
\end{equation}

   For any prime $p\geq 10k^2$, we apply the Taylor series expansion to see that
\begin{align}
\label{e2}
\begin{split}
          \exp & \bigg( (k-1)\Big(\frac{(\alpha_p+\beta_p)f(p)}{p^{1/2+it}}+\frac{(\alpha^2_p+\beta^2_p)f(p)^2}{2p^{1+2it}}\Big)\bigg) \\
        & = 1+(k-1)\frac{(\alpha_p+\beta_p)f(p)}{p^{1/2+it}}+\frac 12((k-1)^2(\alpha_p+\beta_p)^2+(k-1)(\alpha^2_p+\beta^2_p))\frac{f^2(p)}{p^{1+2it}}+O\Big(\frac{k^3}{p^{3/2} }\Big) \\
        & = 1+(k-1)\frac{(\alpha_p+\beta_p)f(p)}{p^{1/2+it}}+\Big( \frac{k(k-1)(\alpha^2_p+\beta^2_p)}{2}+(k-1)^2 \Big)\frac{f^2(p)}{p^{1+2it}}+O\Big(\frac{k^3}{p^{3/2} }\Big).
\end{split}
\end{align}
For $p\geq 10k^2$, 
\begin{equation*}
   \bigg| 1+(k-1)\frac{(\alpha_p+\beta_p)f(p)}{p^{1/2+it}}+\Big(\frac{k(k-1)(\alpha^2_p+\beta^2_p)}{2}+(k-1)^2\Big)\frac{f^2(p)}{p^{1+2it}}\bigg|\geq \frac{1}{2}.
\end{equation*}
Thus, for $p\geq 10k^2$, 
\begin{align}
\label{e3}
\begin{split}
   & \bigg| 1+(k-1)\frac{(\alpha_p+\beta_p)f(p)}{p^{1/2+it}}+\Big(\frac{k(k-1)(\alpha^2_p+\beta^2_p)}{2}+(k-1)^2 \Big)\frac{f^2(p)}{p^{1+2it}}+O\Big(\frac{k^3}{p^{3/2} }\Big) \bigg|^2 \\
   \leq & \bigg| 1+(k-1)\frac{(\alpha_p+\beta_p)f(p)}{p^{1/2+it}}+\Big(\frac{k(k-1)(\alpha^2_p+\beta^2_p)}{2}+(k-1)^2 \Big)\frac{f^2(p)}{p^{1+2it}}\bigg|^2\Big( 1+O\Big(\frac{k^3}{p^{3/2}}\Big)\Big),
\end{split}
\end{align}
  We deduce from \eqref{e1}--\eqref{e3} that
\begin{align*}
%%\label{rewrite}
\begin{split}
     \bigg|\exp & \bigg( (k-1)\sum_{p\in \mathcal{P}_m} \frac{(\alpha_p+\beta_p)f(p)}{p^{1/2+it}}+\frac{(\alpha^2_p+\beta^2_p)f(p)^2}{2p^{1+2it} }\bigg)  \bigg|^2 \\
  &  \leq   e^{O(k^2)} \prod_{\substack{p\in \mathcal{P}_m \\ p\geq 10k^2}}\bigg| 1+(k-1)\frac{(\alpha_p+\beta_p)f(p)}{p^{1/2+it}}+\Big(\frac{k(k-1)(\alpha^2_p+\beta^2_p)}{2}+(k-1)^2\Big)\frac{f^2(p)}{p^{1+2it}}\bigg|^2.
\end{split}
\end{align*}

 We now write
\begin{equation*}
    \prod_{\substack{p\in \mathcal{P}_m \\ p\geq 10k^2 }}\bigg ( 1+(k-1)\frac{(\alpha_p+\beta_p)f(p)}{p^{1/2+it}}+\Big(\frac{k(k-1)(\alpha^2_p+\beta^2_p)}{2}+(k-1)^2\Big)\frac{f^2(p)}{p^{1+2it}}\bigg)=\sum_{v}\frac {h(v)}{v^{1/2+it}}, 
\end{equation*}
  where $h$ is a multiplicative function such that $h(p)=(k-1)(\alpha_p+\beta_p)=(k-1)\lambda(p)$ and $h(p^2)=\tfrac12 k(k-1)(\alpha^2_p+\beta^2_p)+(k-1)^2$ for $p\in \mathcal{P}_m,  p\geq 10k^2$ and $h=0$ otherwise. \newline

 Observe that the orthogonality relation $\mathbb{E} f(n)\overline{f(m)}=\mathbf{1}(n=m)$ implies $\mathbb{E}\big| \sum_{n\leq N} a_nf(n)\big|^2=\sum_{n\leq N} |a_n|^2$ for any set of complex coefficients $(a_n)_{n\leq N}$.  Hence,
\begin{align}
\label{square}
\begin{split}
    \mathbb{E} & \bigg|\sum_{\substack{l\leq x/n \\p|l \implies p\in \mathcal{P}_m}} f(l)\lambda(l)\bigg|^2\prod_{\substack{p\in \mathcal{P}_m \\ p\geq 10k^2}}\bigg| 1+(k-1)\frac{(\alpha_p+\beta_p)f(p)}{p^{1/2+it}}+(\frac{k(k-1)(\alpha^2_p+\beta^2_p)}{2}+(k-1)^2)\frac{f^2(p)}{p^{1+2it}}\bigg|^2 \\
& =  \mathbb{E}\bigg|\sum_{\substack{l\leq x/n \\p|l \implies p\in \mathcal{P}_m}} f(l)\lambda(l)\bigg|^2\bigg| \sum_{v}\frac {h(v)}{v^{1/2+it}}\bigg|^2  = \sum_{u}\bigg|\sum_{\substack{u=lv\\ l\leq x/n\\ p|l \implies p\in \mathcal{P}_m }}\frac{\lambda(l)h(v)}{v^{1/2+it} }\bigg|^2 \\
& \leq  \sum_{u}\bigg(\sum_{\substack{u=lv\\ l\leq x/n}}\frac{|\lambda(l)h(v)|}{v^{1/2} }\bigg)^2 =  \sum_{\substack{l_1v_1=l_2v_2 \\ l_1,l_2\leq x/n }}\frac{|\lambda(l_1)h(v_1)\lambda(l_2)h(v_2)|}{(v_1v_2)^{1/2} }  = \sum_{v_1,v_2}\frac{|h(v_1)h(v_2)|}{(v_1v_2)^{1/2} }\sum_{\substack{l_1,l_2\leq x/n \\ l_1/l_2=v_2/v_1 } }|\lambda(l_1)\lambda(l_2)| \\
& = \sum_{\substack{d,w_1,w_2 \\ (w_1, w_2)=1}}\frac{|h(dw_1)h(dw_2)|}{d(w_1w_2)^{1/2}}\sum_{\substack{m \leq x/(n\max(w_1,w_2)) }} |\lambda(w_2m)\lambda(w_1m)| ,
\end{split}
\end{align}
  where the last equality above follows by setting $d=(v_1,v_2)$, $v_1=dw_1$, $v_2=dw_2$ so that $(w_1, w_2)=1$ and by noting that the condition $l_1/l_2=v_2/v_1$ implies that $l_1=w_2m$, $l_2=w_1m$ for some positive integer $m$. Note that the in the computation \eqref{square}, we simply use the triangle inequality.  We remark here that by doing so, we still expect to get a reasonable upper bound since, from the arguments below, the most important contribution comes from products of roughly the form $\prod_p(1+\frac {\lambda^2(p)}{p})$ with $p$ belonging to certain sets. As $\lambda(p)$ is real and hence $\lambda(p)^2=|\lambda(p)|^2$, we do not lose much if here. \newline

Now Lemma \ref{lemlambdasquaresum1} gives
\begin{align*}
%%\label{square}
\begin{split}
    & \sum_{\substack{l_1,l_2\leq x/n \\ l_1/l_2=v_2/v_1 } }|\lambda(l_1)\lambda(l_2)| = \sum_{\substack{m \leq x/(n\max(w_1,w_2)) }} |\lambda(w_2m)\lambda(w_1m)| \leq  \frac{C_1x}{n\max(w_1,w_2)}(P_1(w_1w_2)+P_2(w_1w_2)). 
\end{split}
\end{align*}

Thus the quantities in \eqref{square} are 
\begin{align*}
%%\label{square}
\begin{split}
    \leq & C_1\sum_{\substack{d,w_1,w_2 \\ (w_1,w_2)=1}}\frac{|h(dw_1)h(dw_2)|}{d(w_1w_2)^{1/2} }\frac{x}{n\max(w_1,w_2)}(P_1(w_1w_2)+P_2(w_1w_2)). 
\end{split}
\end{align*}
   
    Without any loss of generality, we may assume that $w_1\geq w_2$.  As the treatments are similar, we only consider the estimation involving with $P_1(w_1w_2)$ in what follows. Thus we see that the quantities in \eqref{square} are 
\begin{align*}
%%\label{square}
\begin{split}
    \leq C_1\sum_{\substack{d,w_1,w_2 \\ (w_1,w_2)=1, w_1 \geq w_2}}\frac{|h(dw_1)h(dw_2)|}{d(w_1w_2)^{1/2} }\frac{x}{nw_1}P_1(w_1w_2) \leq C_1\sum_{\substack{w_1,w_2 \\ (w_1,w_2)=1, w_1 \geq w_2}}\frac{P_1(w_1w_2)}{(w_1w_2)^{1/2} }\frac{x}{n w_1} \sum_d\frac{|h(dw_1)h(dw_2)|}{d}. 
\end{split}
\end{align*}

By \eqref{merten1}, 
\begin{equation}
\label{estsumd}
    \sum_{d}\frac{|h(dw_1)h(dw_2)|}{d}\leq \prod_{p\leq y }\bigg(1+\frac{(k-1)^2\lambda^2(p)}{p}+\frac{h^2(p^2)}{p^2}\bigg)P_3(w_1w_2)\leq e^{O(k^4)}(\log y )^{(k-1)^2}P_3(w_1w_2),
\end{equation}
   where, for any integer $c$, 
\begin{align}
\label{P3}
\begin{split}
   P_3(c) = & \prod_{p\mid c}\bigg(|h(p)|+\frac{|h(p^2)|}{p}\bigg). 
\end{split}
\end{align}

   We thus deduce from the above and the fact that $(w_1,w_2)=1$ that the quantities in \eqref{square} are 
\begin{align*}
%%\label{square}
\begin{split}
   \leq C_1\sum_{\substack{w_1,w_2 \\ (w_1,w_2)=1, w_1 \geq w_2}} & \frac{1}{(w_1w_2)^{1/2} }\frac{x}{nw_1}P_1(w_1w_2)P_3(w_1w_2) \\
  \leq & C_1 \sum_{\substack{w_1,w_2 \\ (w_1,w_2)=1, w_1 \geq w_2}}\frac{1}{(w_1w_2)^{1/2} }\frac{x}{nw_1}P_1(w_1)P_1(w_2)P_3(w_1)P_3(w_2). 
\end{split}
\end{align*}

   Note that $P_1(n)P_3(n)$ is a non-negative multiplicative function of $n$. Here as usual, the empty product equals to $1$. We apply \eqref{alpha}, \eqref{merten}, \eqref{P12} and \eqref{P3} to see that
\begin{align}
\label{psumbound}
\begin{split}
   \sum_{p \leq x}P_1(p)P_3(p)\log p & \leq O(k^2)(\sum_{p \leq x}|\lambda(p)|^2\log p+O(1)) \\
   & = O(k^2)\sum_{p \leq x}\lambda(p)^2\log p+O(k^2) \leq O(k^2)\sum_{p \leq x}\log p \leq  O(k^2)x. 
\end{split}
\end{align}  
   Similarly,
\begin{align}
\label{phigherpowersumbound}
\begin{split}
   \sum_{\substack{p^l \leq x \\ l \geq 2}}\frac {P_1(p^l)P_3(p^l)l\log p}{p^l} \leq O(k^2)\sum_{\substack{p^l \leq x \\ l \geq 2}}\frac {|\lambda(p^l)\lambda(p)| l \log p}{p^l} \ll  k^2. 
\end{split}
\end{align}  

  The estimations given in \eqref{psumbound} and \eqref{phigherpowersumbound} allow us to apply Theorem 2.14 and Corollary 2.15 of \cite{MVa1} with  $A=O(k^2)$. Hence, for any $1 \leq u \leq w_1$, upon using $|\lambda(p^{\nu})| \leq d(p^{\nu}) \leq \nu+1$ for all positive integer $\nu$, 
\begin{equation*}
\begin{split}
    \sum_{w_2\leq u} P_1(w_2)P_3(w_2)\leq & O(k^2)\frac{u}{\log u} \prod_{\substack{ p\leq u \\p\in \mathcal{P}_m} }\bigg(1+\frac{P_1(p)P_3(p)}{p}+\frac{P_1(p^2)P_3(p^2)}{p^2}+\cdots \bigg) \\
    \leq & O(k^2)\frac{u}{\log u} \prod_{\substack{ p\leq u \\p\leq y }}\bigg(1+\frac{(k-1)\lambda^2(p)}{p}+O(\frac{k^2}{p^2})\bigg) \\
    \leq & e^{O(k^2)}u(\log y)^{k-2}, 
\end{split}
\end{equation*}
   where the last bound above follows from the inequality $1+x \leq e^x$ for all $x \in \rear$ together with \eqref{merten1}.  We deduce from the above and partial summation that
\begin{equation}
\label{estsumw2}
    \sum_{w_2\leq w_1} \frac{P_1(w_2)P_3(w_2)}{w_2^{1/2}}\leq e^{O(k^2)}w_1^{1/2}(\log y)^{k-2}.
\end{equation}
  We note furthermore that 
\begin{equation}
\label{estsumw1}
    \sum_{w_1}\frac{P_1(w_1)P_3(w_1)}{w_1}\leq \prod_{p\leq y}\bigg(1+\frac{(k-1)\lambda^2(p)}{p}+O(\frac{k^2}{p^2})\bigg)\leq e^{O(k^2)}(\log y)^{k-1}. 
\end{equation}
Thus from \eqref{estsumd}, \eqref{estsumw2} and \eqref{estsumw1} that the quantities in \eqref{square} are at most
\begin{equation*}
   \frac{x}{n} e^{O(k^4)}(\log y)^{k^2-2}.
\end{equation*}
  It follows from \eqref{e1}, \eqref{square} and the above that
\begin{align}
\label{E12}
\begin{split}
  \sum_{\substack{n\leq x \\p|n \implies p\not\in \mathcal{P}_m}}\Tilde{d}(n) & |\lambda(n)|^2 \mathbb{E} \bigg|\sum_{\substack{l\leq x/n \\p|l \implies p\in \mathcal{P}_m}} f(l)\lambda(l)\bigg|^2 \bigg|\exp\bigg( (k-1)\sum_{p\in \mathcal{P}_m} \frac{(\alpha_p+\beta_p)f(p)}{p^{1/2+it}}+\frac{(\alpha^2_p+\beta^2_p)f(p)^2}{2p^{1+2it} }\bigg)  \bigg|^2\\
& \leq e^{O(k^4)}x(\log y)^{k^2-2}\sum_{\substack{n\leq x \\p|n \implies p\not\in \mathcal{P}_m}}\frac{\Tilde{d}(n)|\lambda(n)|^2}{n}.
\end{split}
\end{align}  

  Recalling that $\Tilde{d}(n)=\sum_{d|n} \mathbf{1}(p|d \implies p\in I_m)$ and noting that the condition $p\leq x, p\not \in \mathcal{P}_m$ implies $p\in I_m$ or $p\in [y,x]$, we see that
\begin{equation*}
    \sum_{\substack{n\leq x \\p|n \implies p\not\in \mathcal{P}_m}}\frac{\Tilde{d}(n)|\lambda(n)|^2}{n}=\sum_{\substack{d\\ p|d\implies p\in I_m} }\frac{1}{d}\sum_{\substack{n\leq x/d \\ p|n\implies p\not\in \mathcal{P}_m }}\frac{|\lambda(dn)|^2}{n} \\
\leq \sum_{\substack{d, n\\ p|d\implies p\in I_m \\ p|n\implies p\not\in \mathcal{P}_m }}\frac{|\lambda(dn)|^2}{dn}.
\end{equation*}

  Note that the last expression above is jointly multiplicative in both $n$ and $d$.  So we can express the double sum in terms of an Euler product. We then deduce that
\begin{align*}
%%\label{phigherpowersumbound}
\begin{split}
    \sum_{\substack{n\leq x \\p|n \implies p\not\in \mathcal{P}_m}}\frac{\Tilde{d}(n)|\lambda(n)|^2}{n}
 \ll & \prod_{p\in I_m}\Big(\sum_{n_1, n_2 \geq 0}\frac {|\lambda(p^{n_1+n_2})|^2}{p^{n_1+n_2}}\Big)\prod_{y\leq p\leq x}\Big(\sum_{n_3 \geq 0}\frac {|\lambda(p^{n_3})|^2}{p^{n_3}}\Big) \\
\ll & \prod_{p\in I_m}\Big(1+\frac {2|\lambda(p)|^2}{p}+O\Big(\frac 1{p^2}\Big)\Big)\prod_{y\leq p\leq x}\Big(1+\frac {|\lambda(p)|^2}{p}+O\Big(\frac 1{p^2}\Big)\Big) \\
% \ll & \prod_{p\in I_m}\Big((1+\frac {2|\lambda(p)|^2}{p})(1+O(\frac 1{p^2})\Big)\prod_{y\leq p\leq x}\Big((1+\frac {|\lambda(p)|^2}{p})(1+O(\frac 1{p^2})\Big) \\
\ll & \prod_{p\in I_m}\Big(1+\frac {2|\lambda(p)|^2}{p}\Big)\prod_{y\leq p\leq x}\Big(1+\frac {|\lambda(p)|^2}{p}\Big) \ll \prod_{p\in I_m}\Big(1-\frac{|\lambda(p)|^2}{p}\Big)^{-2}\prod_{y\leq p\leq x}\Big(1-\frac{|\lambda(p)|^2}{p}\Big)^{-1}.
\end{split}
\end{align*}  
  As $\prod_{y\leq p\leq x}\big(1-|\lambda(p)|^2p^{-1}\big)^{-1}\ll \log x/\log y$ by \eqref{merten1}, the assertion of Lemma \ref{hardestlemma} now follows from this, \eqref{E11} and \eqref{E12}. 
\end{proof}

 Now, the proof of Proposition \ref{p4} follows by a straightforward modification of the proof of \cite[Proposition 4.2]{Szab24}, upon using Lemma \ref{hardestlemma}. For any $1\leq m \leq M$ and $|l|\leq \frac{\log y}{2}$, set
\begin{align}
\label{KAdef}
\begin{split}
    K_{m,l}(f)= & \bigg|\sum_{n\leq x}f(n)\lambda(n)\bigg|^2\exp\Big(2(k-1)\sum_{\substack{m'=1\\ m'\neq m} }^M\Re D_{m',l}(f)\Big) \\
=& \bigg|\sum_{n\leq x}f(n)\lambda(n)\bigg|^2 \bigg|\exp\bigg( (k-1)\sum_{p\in \mathcal{P}_m} \frac{(\alpha_p+\beta_p)f(p)}{p^{1/2+il/\log y}}+\frac{(\alpha^2_p+\beta^2_p)f(p)^2}{2p^{1+2il/\log y} }\bigg)  \bigg|^2,  \quad \mbox{and} \\
  A_m=& 4\sum_{p\in I_m}\big(\frac{2}{p}+\frac{3}{p^2}\big). 
\end{split}
\end{align}  

  By Lemma \ref{hardestlemma} and arguing as in the proof of \cite[Proposition 4.2]{Szab24}, we get that 
\begin{equation*}
\begin{split}
    &\mathbb{E} K_{m,l}(f) |\text{Err}_{m,l}(f)|  \leq  e^{O(k^4)}x(\log y)^{k^2-2} \frac{\log x}{\log y}\prod_{p\in I_m}\bigg(1-\frac{\lambda^2(p)}{p}\bigg)^{-2} \sum_{\substack{j_1,j_2\geq 0\\ \max(j_1,j_2)>J_m }} \frac{(k-1)^{j_1+j_2}}{j_1!j_2!}\Big\lceil \frac{j_1+j_2}{2} \Big\rceil !  A_m^{\frac{j_1+j_2}{2}}. \\
\end{split}
\end{equation*}
now \eqref{merten1} implies that $\prod_{p\in I_m}\big(1-\lambda^2(p)p^{-1}\big)^{-2}\ll (\log \log y)^2\leq e^{J_1}$ for $m=1$. Similarly, $\prod_{p\in I_m}\big(1-\lambda^2(p)p^{-1}\big)^{-2}\leq 100\leq e^{J_M}\leq e^{J_m}$ for $m\geq 2$. Therefore, it remains to show that the inner sum above is at most $e^{-2J_m}$. Without loss of generality, we may assume that $j_1\geq j_2$. Then as shown in the proof of \cite[Proposition 4.2]{Szab24} that the inner sum is at most $e^{-2J_m}$ provided that we have
\begin{equation}
\label{estAJ}
    10^4(k-1)^2A_m\leq J_m.
\end{equation}
 Note that $A_m=4P_m$, where $P_m$ is defined in the proof of \cite[Lemma 9]{Szab24}. Thus the estimates for $P_m$ given in the proof of \cite[Lemma 9]{Szab24} yield
\begin{align}
\label{Abound}
\begin{split}
    A_m \leq \begin{cases}
        40, & \text{  if  } m\geq 2, \\
        12\log \log y, & \text{  if  } m=1. 
    \end{cases}
\end{split}
\end{align}  
  We recall that $J_1=(\log \log y)^{3/2}$ and note that $J_m\geq J_M\geq 10^{10}k^2$. It follows that the estimation given in \eqref{estAJ} is valid for all $m$. 
This thus completes the proof of Proposition \ref{p4}.

\section{Proof of Proposition \ref{p1}}

  As shown in the proof of \cite[Proposition 3.2]{Szab24}, we have for $k\geq 2$, 
\begin{equation*}
%%\label{höldi}
\begin{split}
      R(\chi)^{\frac{k}{k-1}}
      \leq & \sum_{|l_1|,|l_2|\leq (\log y) /2} \prod_{m=1}^M R_{m,l_1}(\chi)R_{m,l_2}(\chi)^{1/(k-1)}. \\
\end{split}
\end{equation*}

   Supposer that for fixed $l_1, l_2$, we have
\begin{equation}
\label{sumRbound}
    \frac{1}{\varphi(q)}\sum_{\chi\,\text{mod}\, q}\prod_{m=1}^M R_{m,l_1}(\chi)R_{m,l_2}(\chi)^{1/(k-1)}\ll \frac{(\log y)^{k^2} }{|l_1-l_2|^{2(k-1)} +1}. 
\end{equation}
  Then summing over $l_1$ and $l_2$ leads to the estimation
\begin{equation*}
        \frac{1}{\varphi(q)}\sum_{\chi\,\text{mod}\, q} R(\chi)^{k/(k-1)}\ll_k (\log y)^{k^2+1}.
\end{equation*}
  This above estimation now implies the assertion given in Proposition \ref{p2} is valid. \newline

  Thus, it remains to establish \eqref{sumRbound}. For this, we partition $[0,\infty)$ into intervals $(I^{(m)}_n)_{n\geq 0}$ for any $1\leq m\leq M$, where $I^{(m)}_0=[0, \frac{J_m}{100k}] $. We also define the dyadic interval $I^{(m)}_n=\frac{J_m}{100k}\cdot [2^{n-1}, 2^n]$ for any $n\geq 1$.  Set
\begin{equation*}
    \mathcal{X}(n_1,\ldots, n_M)=\{ \chi\in \mathcal{X}_q : |\Re D_{m,l_2}(\chi)|\in I^{(m)}_{n_m} \; \mbox{for all}  \; 1\leq m\leq M \}.
\end{equation*}

  We now fix non-negative integers $n_1,\ldots n_M$ and consider $\chi\in \mathcal{X}(n_1,\ldots, n_M)$.  We write $W_m=\inf I^{(m)}_{n_m}$ for each $1 \leq m \leq M$.
Let $a_m= 2\lceil 200kJ_m \rceil$ where $\lceil \ell \rceil = \min \{ m \in \intz : \ell \leq m \}$ is the celing of $\ell$. We further define
\begin{equation*}
     U_{m,l_2}(\chi)=
         \begin{cases}
             \Big( \sum_{j=0}^{J_m}\frac{1}{j!}\big( \Re D_{m,l_2}(\chi)\big)^j\Big)^2 & \text{ if }  n_m=0, \\
             e^{4 W_m}|D_{m,l_2}(\chi)W_m^{-1}|^{a_m}  & \text{ if } J_m/100k\leq W_m \leq 100kJ_m, \\
             \Big(2\frac{ (k-1)^{J_m} }{J_m!} (2W_m)^{J_m}\Big)^{\frac{2}{k-1} } |D_{m,l_2}(\chi) W_m^{-1}|^{a_m} & \text{ if } 100kJ_m\leq W_m.\\
         \end{cases}
\end{equation*}

  We quote the following result from \cite[Lemma 7]{Szab24}, which asserts that $U_{m,l_2}(\chi)$ dominates over $R_{m,l_2}(\chi)^{\frac{1}{k-1}}$, up to a negligible error.
\begin{lemma}
\label{easy}
With the notation as above, we have, for any $\chi$ mod $q$ and $1\leq m\leq M$, 
\begin{equation*}
       R_{m,l_2}(\chi)^{1/(k-1)} \leq \big(1+O(e^{-J_m})\big)U_{m,l_2}(\chi).
\end{equation*}
\end{lemma}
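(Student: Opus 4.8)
The plan is to reduce everything to elementary estimates on the truncated exponential $E_J(w):=\sum_{j=0}^{J}w^j/j!$. Fix $\chi$ and $m$, write $z:=D_{m,l_2}(\chi)$ and $x:=\Re z$, so that $|z|\ge |x|$ and, directly from the definitions, $R_{m,l_2}(\chi)^{1/(k-1)}=|E_{J_m}((k-1)x)|^{2/(k-1)}$. Let $n_m$ be the index with $|x|\in I^{(m)}_{n_m}$ and set $W_m=\inf I^{(m)}_{n_m}$; the argument then splits into the three regimes that define $U_{m,l_2}(\chi)$. In the two ``large'' regimes ($n_m\ge 1$) one always has $W_m\le |x|\le 2W_m$, and the factor $|z/W_m|^{a_m}\ge 1$ is a free bonus coming from $|z|\ge |x|\ge W_m$, so it may simply be dropped.

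In the small regime $n_m=0$ one has $|x|\le J_m/(100k)$ and $U_{m,l_2}(\chi)=E_{J_m}(x)^2$; here I would compare the truncated exponential with the true one. For $|w|\le J_m/100$, Stirling's bound $(J_m+1)!\ge((J_m+1)/e)^{J_m+1}$ makes the tail $\sum_{j>J_m}|w|^j/j!$ at most $O((e/100)^{J_m})=O(e^{-3J_m})$, and since $e^{w}\ge e^{-J_m/100}$ dominates this, $E_{J_m}(w)=e^{w}(1+O(e^{-J_m}))$ for $w\in\{x,(k-1)x\}$. Using $k\ge 2$ (hence $2/(k-1)\le 2$) and taking the relevant powers then gives $R_{m,l_2}(\chi)^{1/(k-1)}=e^{2x}(1+O(e^{-J_m}))=(1+O(e^{-J_m}))E_{J_m}(x)^2$. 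In the regime $J_m/(100k)\le W_m\le 100kJ_m$ I would just use the crude bound $|E_{J_m}((k-1)x)|\le e^{(k-1)|x|}\le e^{2(k-1)W_m}$, so the left-hand side is $\le e^{4W_m}\le e^{4W_m}|z/W_m|^{a_m}=U_{m,l_2}(\chi)$. In the regime $W_m\ge 100kJ_m$ one has $(k-1)|x|\ge(k-1)\cdot 100kJ_m\ge 200J_m$, so the terms of $\sum_{j\le J_m}((k-1)|x|)^j/j!$ grow with consecutive ratio at least $200$; summing the resulting geometric majorant downward from $j=J_m$ yields $|E_{J_m}((k-1)x)|\le \tfrac{200}{199}\cdot\tfrac{((k-1)|x|)^{J_m}}{J_m!}\le 2\,\tfrac{(k-1)^{J_m}(2W_m)^{J_m}}{J_m!}$, and raising this to the power $2/(k-1)$ and reinstating $|z/W_m|^{a_m}\ge 1$ gives exactly $\le U_{m,l_2}(\chi)$.

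The only genuinely delicate step is the small regime, where the truncated and true exponentials must agree within a factor $1+O(e^{-J_m})$; this is precisely why $I^{(m)}_0$ is cut off at $J_m/(100k)$, the constant $100$ being what forces $(k-1)|x|\le J_m/100$ and hence keeps the tail's Stirling estimate far below $e^{-J_m}$ while the main term $e^{(k-1)x}$ stays above $e^{-J_m/100}$. The two large regimes are robust: no $1+O(e^{-J_m})$ slack is actually needed there, and the constant $2$ appearing inside $U_{m,l_2}(\chi)$ in the third regime is exactly the $\tfrac{200}{199}$ that the geometric tail consumes. Since the three regimes are exhaustive this yields the lemma, the remaining bookkeeping (checking $|x|\in[W_m,2W_m]$, the Stirling inequalities, and $2/(k-1)\le 2$) being entirely routine.
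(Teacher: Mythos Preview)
Your proof is correct. The paper does not give its own argument for this lemma; it simply quotes \cite[Lemma 7]{Szab24}, so your elementary case-by-case analysis via the truncated exponential $E_J(w)=\sum_{j\le J}w^j/j!$ is exactly the sort of self-contained verification the citation stands in for, and each of the three regimes is handled soundly.
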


    We then deduce from the proof of \cite[Proposition 3.2]{Szab24} that in order to establish Proposition \ref{p2}, it suffices to prove the following result. 
\begin{proposition}
\label{big_upper}
    With the notation as above, we have, for fixed non-negative integers $n_1,\ldots n_M$,
\begin{equation*}
    \frac{1}{\varphi(q)}\sum_{\chi\in \mathcal{X}(n_1,\ldots,n_M)} \prod_{m=1}^M R_{m,l_1}(\chi)U_{m,l_2}(\chi)\ll \frac{(\log y)^{k^2}}{|l_1-l_2|^{2(k-1)}+1} \prod_{m=1}^M (\inf I^{(m)}_{n_m}+1)^{-2}.
\end{equation*}
\end{proposition}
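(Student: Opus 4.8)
The plan is to convert the character average into an expectation over a Steinhaus random multiplicative function $f$, exploit the independence of the prime blocks $I_m=(y_{m-1},y_m]$, and estimate each block separately, using that $\prod_m R_{m,l_1}(\chi)U_{m,l_2}(\chi)$ is a short Dirichlet polynomial. Since $R_{m,l_1}(\chi)\ge0$ and $U_{m,l_2}(\chi)\ge0$ for every $\chi$, I first enlarge $\sum_{\chi\in\mathcal X(n_1,\dots,n_M)}$ to $\sum_{\chi\bmod q}$; the decay in the $n_m$ is already built into the factors $|D_{m,l_2}(\chi)/W_m|^{a_m}$ of $U_{m,l_2}$ (with $W_m=\inf I^{(m)}_{n_m}$, $a_m=2\lceil200kJ_m\rceil$), so nothing is lost. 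Expanding the definitions, $\prod_{m=1}^M R_{m,l_1}(\chi)U_{m,l_2}(\chi)$ is supported on terms $\chi(a)\overline{\chi(b)}$ with $a,b\le\prod_m y_m^{\max(4J_m,a_m)}<\prod_m y_m^{10^4kJ_m}<x\le q^{1/2}<q$ by \eqref{shortpolynomial}; hence $a\equiv b\pmod q$ forces $a=b$, orthogonality of Dirichlet characters picks out exactly the diagonal, and this matches $\mathbb E\,f(a)\overline{f(b)}=\mathbf 1(a=b)$ term by term. Thus $\frac1{\varphi(q)}\sum_{\chi\bmod q}\prod_m R_{m,l_1}(\chi)U_{m,l_2}(\chi)=\mathbb E\prod_m R_{m,l_1}(f)U_{m,l_2}(f)=\prod_m\mathbb E[R_{m,l_1}(f)U_{m,l_2}(f)]$, the last step by independence, each factor depending only on $(f(p))_{p\in I_m}$.

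For a block with $n_m=0$, write $R_{m,l_1}(f)\le e^{2(k-1)\Re D_{m,l_1}(f)}+|\mathrm{Err}_{m,l_1}(f)|$ with $\mathrm{Err}_{m,l_1}$ as in Proposition \ref{p4}, and, since $U_{m,l_2}(f)=\bigl(\sum_{j\le J_m}\tfrac1{j!}(\Re D_{m,l_2}(f))^j\bigr)^2$, also $U_{m,l_2}(f)\le e^{2\Re D_{m,l_2}(f)}+|\mathrm{Err}^{\sharp}_{m,l_2}(f)|$ (the analogue with $k-1$ replaced by $1$). The main term $\mathbb E[e^{2(k-1)\Re D_{m,l_1}(f)}e^{2\Re D_{m,l_2}(f)}]$ is evaluated via Lemma \ref{eulerproduct} (parameters $\alpha=k-1$, $\beta=1$, $\sigma_1=\sigma_2=0$, $t_j=l_j/\log y$), together with a trivial treatment of the finitely many primes below $200k^2$; using $(\alpha_p+\beta_p)^2=\lambda^2(p)$ this produces the local factor $\exp\bigl(\tfrac{\lambda^2(p)}{p}\bigl[(k-1)^2+1+2(k-1)\cos\bigl((l_1-l_2)\tfrac{\log p}{\log y}\bigr)\bigr]\bigr)$ at each $p\in I_m$, up to a total multiplicative error $e^{O(k^2\log\log k)}$. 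The three $\mathrm{Err}$-terms are handled by Cauchy--Schwarz together with Lemma \ref{evenmoment} (applied with $c_n=\mathbf 1(n=1)$ and $\sum_{p\in I_m}\lambda^2(p)/p=O(1)$ by \eqref{merten1} and $y_m=y_{m-1}^{20}$); as $J_m\ge J_M\ge\exp(10^4k^2)$ is enormous, the resulting tail sums over $\max(j_1,j_2)>J_m$ are $O(e^{-J_m})$ times the main term, hence negligible.

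For a block with $n_m\ge1$ one has $U_{m,l_2}(f)=(\text{prefactor})\cdot|D_{m,l_2}(f)/W_m|^{a_m}$ with $W_m=\tfrac{J_m}{100k}2^{n_m-1}$, so $\mathbb E[R_{m,l_1}(f)U_{m,l_2}(f)]=(\text{prefactor})\,W_m^{-a_m}\,\mathbb E[R_{m,l_1}(f)|D_{m,l_2}(f)|^{a_m}]$. By Cauchy--Schwarz, $\mathbb E R_{m,l_1}(f)^2\le e^{O(k^2S_m)}$ (Lemma \ref{eulerproduct} with $\alpha=2(k-1)$) and $\mathbb E|D_{m,l_2}(f)|^{2a_m}\ll a_m!\,(2S_m)^{a_m}$ (Lemma \ref{evenmoment}), where $S_m=\sum_{p\in I_m}\lambda^2(p)/p$. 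The prefactor equals $e^{4W_m}$ in the middle range and is at most $2W_m^{2J_m/(k-1)}$ in the large range (Stirling applied to $(k-1)^{J_m}/J_m!$ makes the $\log J_m$ in the prefactor cancel against $\log W_m$), while the moment contributes a factor $(Ca_mS_m/W_m^2)^{a_m/2}$; the combined $W_m$-power is $W_m^{2J_m/(k-1)-a_m}$ with the \emph{negative} exponent $2J_m/(k-1)-a_m<0$, and the dominant term in the logarithm of the bound is $-\tfrac{a_m}{2}\log(W_m^2/S_m)$. Using $S_m=O(1)$ for $m\ge2$ and $a_m\ge400kJ_m\gg J_m\ge\exp(10^4k^2)$, and using that the one possibly large block $m=1$ (where $S_1\asymp\log\log y$) has $J_1=(\log\log y)^{3/2}$ chosen so that $a_1\log J_1\gg k^2S_1$, one gets in all cases $\mathbb E[R_{m,l_1}(f)U_{m,l_2}(f)]\le e^{-J_m}\bigl(\inf I^{(m)}_{n_m}+1\bigr)^{-2}\le\bigl(\inf I^{(m)}_{n_m}+1\bigr)^{-2}$, with the $2^{n_m-1}$ inside $W_m$ furnishing the decay in $n_m$ and with room to spare.

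Finally I multiply the local bounds. The factors with $n_m\ge1$ contribute $\prod_{m:\,n_m\ge1}(\inf I^{(m)}_{n_m}+1)^{-2}=\prod_{m=1}^M(\inf I^{(m)}_{n_m}+1)^{-2}$ (the $n_m=0$ factors being $1$), and incur no extra constant because each is already $\le1$. For the $n_m=0$ blocks, the exponent $(k-1)^2+1+2(k-1)\cos(\cdot)\ge(k-2)^2\ge0$, so each main local factor is $\ge1$ and the product over $\{m:n_m=0\}$ may be extended over all $m$; by \eqref{merten1} $\sum_{p\le y}\lambda^2(p)/p=\log\log y+O(1)$, and by Lemma \ref{RS3}, writing $\lambda^2(p)=\lambda(p^2)+1$ and using $|l_1-l_2|\le\log y$ (so $|\alpha|\le1<e^e$ and the $\operatorname{sym}^2$ contribution is $O(1)$), $\sum_{p\le y}\tfrac{\lambda^2(p)}{p}\cos\bigl((l_1-l_2)\tfrac{\log p}{\log y}\bigr)\le\log\tfrac{\log y}{\max(1,|l_1-l_2|)}+O(1)$; hence the $n_m=0$ product is $\ll(\log y)^{(k-1)^2+1+2(k-1)}/\max(1,|l_1-l_2|)^{2(k-1)}=(\log y)^{k^2}/(|l_1-l_2|^{2(k-1)}+1)$, the $e^{O(k^2\log\log k)}$ being absorbed into the implied constant. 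Combining the two pieces gives Proposition \ref{big_upper}. The main obstacle is precisely the block estimate for $n_m\ge1$: one must bound the correlated high moment $\mathbb E[R_{m,l_1}(f)|D_{m,l_2}(f)|^{a_m}]$ and verify that the prefactors $e^{4W_m}$ and $\bigl(2(k-1)^{J_m}(2W_m)^{J_m}/J_m!\bigr)^{2/(k-1)}$ of $U_{m,l_2}$ are dominated by $W_m^{-a_m}$, which is exactly where the choices $J_m\ge\exp(10^4k^2)$, $a_m\asymp kJ_m$, $J_1=(\log\log y)^{3/2}$, and the boundedness $\sum_{p\in I_m}\lambda^2(p)/p=O(1)$ for $m\ge2$, are all indispensable.
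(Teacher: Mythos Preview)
Your proof is correct and follows essentially the same approach as the paper's: the reduction to a Steinhaus expectation via orthogonality, the factorization by independence over blocks, and the block-by-block estimates (Cauchy--Schwarz plus Lemma~\ref{evenmoment} for $n_m\ge1$; Lemma~\ref{eulerproduct} plus tail control for $n_m=0$) match exactly what the paper packages into Lemmas~\ref{zerocase} and~\ref{nonzerocase}, and your concluding prime-sum estimate via Lemma~\ref{RS3} is the same as the paper's \eqref{sumoverprimes}. One minor slip: your length bound should read $\prod_m y_m^{4J_m+\max(4J_m,a_m)}$ rather than $\prod_m y_m^{\max(4J_m,a_m)}$ (the factor $R_{m,l_1}$ also contributes $y_m^{4J_m}$), but since $4J_m+a_m<10^4kJ_m$ this is still $<q$ by \eqref{shortpolynomial} and nothing is affected.
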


  We now define $U_{m,l}(f)$ in the same way as we define $U_{m,l}(\chi)$, with $\chi$ being replaced by $f$. Recall that $R_{m,l}(f)$ is already defined in Section \ref{outline}. To establish Proposition \ref{big_upper}, we need two lemmas.  
\begin{lemma}
 \label{zerocase}
    With the notation as above, we have, for $U_{m,l_2}$ in the case where $n_m=0$, 
     \begin{equation*}
          \mathbb{E} \big|e^{2(k-1)\Re D_{m,l_1}(f)+2\Re D_{m,l_2}(f)}- R_{m,l_1}(f) U_{m,l_2}(f)\big|\leq e^{-J_m}.
     \end{equation*}
 \end{lemma}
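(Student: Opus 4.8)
The plan is to follow the proof of the corresponding lemma in \cite{Szab24}, expanding all the exponentials into their power series and comparing with the relevant truncations. Write $X=\Re D_{m,l_1}(f)$ and $Y=\Re D_{m,l_2}(f)$, both real, and set
\begin{equation*}
g_1=e^{(k-1)X},\qquad p_1=\sum_{j=0}^{J_m}\frac{(k-1)^j}{j!}X^j,\qquad r_1=g_1-p_1,
\end{equation*}
and analogously $g_2=e^{Y}$, $p_2=\sum_{j=0}^{J_m}\frac1{j!}Y^j$, $r_2=g_2-p_2$. Then $e^{2(k-1)\Re D_{m,l_1}(f)+2\Re D_{m,l_2}(f)}=g_1^2g_2^2$, while in the case $n_m=0$ one has $R_{m,l_1}(f)U_{m,l_2}(f)=p_1^2p_2^2$. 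Since $g_1,g_2>0$ while $p_1,p_2$ are real, the identity $g_1^2g_2^2-p_1^2p_2^2=(g_1g_2-p_1p_2)(g_1g_2+p_1p_2)$ together with the triangle inequality yields
\begin{equation*}
\bigl|g_1^2g_2^2-p_1^2p_2^2\bigr|\le\bigl(g_2|r_1|+|p_1||r_2|\bigr)\bigl(g_1g_2+|p_1||p_2|\bigr),
\end{equation*}
so the quantity to be estimated is bounded by a sum of four expectations, each carrying exactly one of the factors $|r_1|$ or $|r_2|$.

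Next I apply Hölder's inequality (with suitable exponents summing to $1$) to each of the four terms so as to isolate its $r$-factor; the task then reduces to bounding the fourth moments $\mathbb{E}|g_i|^4$, $\mathbb{E}|p_i|^4$ on one side and $\mathbb{E}|r_i|^4$ on the other. For the former, use $|p_1|\le e^{(k-1)|X|}\le e^{(k-1)|D_{m,l_1}(f)|}$ (and likewise for $g_1$ and for the index $2$), expand $D_{m,l_1}(f)$ as a sum over $p\in I_m$ of independent summands, and estimate each factor by the elementary inequality $\mathbb{E}\,e^{\Re(wf(p))}\le e^{|w|^2/4}$ valid for $f(p)$ uniform on the unit circle (after first splitting $e^{c|D|}$ into four terms of the shape $e^{c\Re(\varepsilon_1(1-\varepsilon_2 i)D)}$ with $\varepsilon_1,\varepsilon_2\in\{\pm1\}$); with $V_m:=\sum_{p\in I_m}\lambda^2(p)/p$ this gives $\mathbb{E}|g_i|^4,\ \mathbb{E}|p_i|^4\le\exp(O_k(V_m)+O_k(1))$. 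By \eqref{merten1} and the construction of the $y_m$, $V_m=O(1)$ for $m\ge2$ and $V_1\ll\log\log y$, so these moments are $e^{O_k(1)}$ when $m\ge2$ and only $(\log y)^{O_k(1)}$ when $m=1$. For $\mathbb{E}|r_i|^4$, apply Minkowski's inequality in $L^4$, bound $\mathbb{E}X^{4j}\le\mathbb{E}|D_{m,l_1}(f)|^{4j}$, and invoke Lemma \ref{evenmoment} with $c_n=\mathbf{1}(n=1)$, the unimodular phases $p^{-il_1/\log y}$, $p^{-2il_1/\log y}$ absorbed into the coefficients, and $|\alpha_p+\beta_p|=|\lambda(p)|$, $|\alpha_p^2+\beta_p^2|\le2$ from \eqref{alpha}; this gives $\mathbb{E}|D_{m,l_1}(f)|^{4j}\ll(2j)!\,(2V_m+O(1))^{2j}$. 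Using $(2j)!\le4^j(j!)^2$ and $j!\ge(j/e)^j$, the $j$-th term of the resulting series is $\ll\bigl(C'(k-1)^2V_m/j\bigr)^{j/2}$ with $C'$ absolute; since $J_m\ge C'(k-1)^2V_m$ holds for every $m$ — because $J_M\ge\exp(10^4k^2)$ dwarfs $(k-1)^2$ when $m\ge2$, and because $J_1=(\log\log y)^{3/2}$ dominates $V_1\ll\log\log y$ when $m=1$ and $y$ is large — the series is controlled by its leading term, so $(\mathbb{E}|r_i|^4)^{1/4}\ll\bigl(C'(k-1)^2V_m/J_m\bigr)^{J_m/2}$.

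Putting these together, each of the four terms is $\ll\exp(O_k(V_m)+O_k(1))\cdot\bigl(C'(k-1)^2V_m/J_m\bigr)^{J_m/2}$, and it remains to see this is at most $\tfrac14e^{-J_m}$. For $m\ge2$ this is immediate: $V_m=O(1)$ and $J_m\ge J_M\ge\exp(10^4k^2)$ make $\bigl(C'(k-1)^2V_m/J_m\bigr)^{J_m/2}$ smaller than $e^{-J_m}$ by an enormous margin, which absorbs the factor $e^{O_k(1)}$. For $m=1$, $V_1\asymp\log\log y$ and $J_1=(\log\log y)^{3/2}$ give $C'(k-1)^2V_1/J_1\ll_k(\log\log y)^{-1/2}$, so $\bigl(C'(k-1)^2V_1/J_1\bigr)^{J_1/2}$ is as small as $\exp\bigl(-\tfrac{J_1}{8}\log\log\log y\bigr)$; since $\tfrac18J_1\log\log\log y=\tfrac18(\log\log y)^{3/2}\log\log\log y$ grows faster than any fixed multiple of $\log\log y$, this beats the factor $\exp(O_k(\log\log y))$ once $y$ (equivalently $C_0$, and hence the prime $q$) is large enough in terms of $k$. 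The one genuinely delicate point is exactly this $m=1$ bookkeeping: there the Euler-product fourth moments of $g_i,p_i$ are as large as $(\log y)^{O_k(1)}$, and one must exploit the super-exponential (factorial) decay of the tail $r_i$ — made effective precisely because $J_1$ grows like $(\log\log y)^{3/2}$ rather than like $\log\log y$ — to absorb them; everything else is routine once the expansion of the first paragraph is in place.
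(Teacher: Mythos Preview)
Your argument is correct and arrives at the same numerical criterion the paper invokes. The paper's own proof is a one-line reduction to \cite[Lemma~8]{Szab24}: it notes that the only change from Szab\'o's setting is that the variance parameter $(k-1)^2\sum_{y_{m-1}<p\le y_m}(\tfrac{2}{p}+\tfrac{3}{p^2})$ acquires an extra factor of~$4$ (coming from $|\alpha_p+\beta_p|\le 2$ rather than~$1$), and then verifies that the resulting inequality $1000\cdot 4(k-1)^2\sum_{p\in I_m}(\tfrac{2}{p}+\tfrac{3}{p^2})\le J_m$ still holds, which is precisely \eqref{estAJ}.

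Your route differs in packaging rather than substance. Szab\'o's argument (and the paper's proof of Proposition~\ref{p4}) expands $e^{2(k-1)X+2Y}-p_1^2p_2^2$ directly as a quadruple sum $\sum_{j_1,j_2,j_3,j_4}$ with at least one index exceeding $J_m$, then bounds each term via Lemma~\ref{evenmoment}; you instead factor $g_1^2g_2^2-p_1^2p_2^2$ and apply H\"older to reduce to fourth moments of $g_i,p_i,r_i$ separately. Both approaches feed into Lemma~\ref{evenmoment} for the tail moments and yield the same threshold $J_m\gg (k-1)^2\sum_{p\in I_m}\lambda(p)^2/p$; your factoring is arguably cleaner, while the direct expansion makes the constant in front of $J_m$ slightly more explicit. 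One small slip: in your last paragraph you write that taking ``$C_0$ large'' forces $y$ large, but $y=x^{1/C_0}$, so larger $C_0$ makes $y$ smaller; what you need (and what the paper implicitly uses throughout) is that $x$, hence $y$, is large enough in terms of~$k$.
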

\begin{proof}
   The proof is identical to that of \cite[Lemma 8]{Szab24} except the quantity $(k-1)^2 \sum_{y_{m-1}<p\leq y_m} \big(\frac{2}{p}+\frac{3}{p^2} \big)$ must be replaced by $4(k-1)^2 \sum_{y_{m-1}<p\leq y_m} \big(\frac{2}{p}+\frac{3}{p^2} \big)$. Then all we need is to verify that, with this new quantity, the bounds $10004(k-1)^2 \sum_{y_{m-1}<p\leq y_m} \big(\frac{2}{p}+\frac{3}{p^2} \big)\leq J_m$ holds. This follows from \eqref{estAJ} so that the lemma is proven.
\end{proof}
\begin{lemma}
\label{nonzerocase}
     With the notation as above. We have for $U_{m,l_2}$ in the case where $n_m\geq 1$, 
    \begin{equation*}
        \mathbb{E}  R_{m,l_1}(f) U_{m,l_2}(f)\leq (\inf I^{(m)}_{n_m}+1)^{-2}.
    \end{equation*}
\end{lemma}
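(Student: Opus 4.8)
The plan is to exploit that $R_{m,l_1}(f)$ is a squared truncated exponential polynomial of degree $2J_m$ in $\Re D_{m,l_1}(f)$, while $U_{m,l_2}(f)$ in the case $n_m\geq 1$ contains a high power $|D_{m,l_2}(f)W_m^{-1}|^{a_m}$ with $a_m=2\lceil 200kJ_m\rceil$, multiplied by a deterministic constant. First I would bound $R_{m,l_1}(f)\leq \exp\!\big(2(k-1)\Re D_{m,l_1}(f)\big)$ pointwise (from the Taylor-series inequality, as the summands are non-negative), and then expand the remaining exponential as a random Euler product over $p\in I_m$ via the identity
\begin{equation*}
  \exp\!\Big((k-1)\tfrac{(\alpha_p+\beta_p)f(p)}{p^{1/2+il_1/\log y}}+(k-1)\tfrac{(\alpha_p^2+\beta_p^2)f(p)^2}{2p^{1+2il_1/\log y}}\Big)
  =1+(k-1)\tfrac{\lambda(p)f(p)}{p^{1/2+\cdots}}+O\!\big(k^2p^{-1}\big)+O\!\big(k^3p^{-3/2}\big),
\end{equation*}
valid for $p\geq 10k^2$ (the finitely many small primes contribute a harmless $e^{O(k^2)}$). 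After this reduction the quantity to estimate is
\begin{equation*}
  \mathbb{E}\,\Big|\sum_{u}\frac{g(u)f(u)}{u^{1/2}}\Big|^2\,\big|D_{m,l_2}(f)W_m^{-1}\big|^{a_m},
\end{equation*}
times the deterministic prefactor in the definition of $U_{m,l_2}$, where $g$ is a suitable multiplicative function supported on $I_m$-smooth numbers with $|g(p)|\ll k$, $|g(p^j)|\ll k^2$.

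Next I would apply Lemma \ref{evenmoment} with $\mathcal P$ taken to be the primes in $I_m$, the sequence $c_u=g(u)u^{-1/2}$, $a_p=(\alpha_p+\beta_p)=\lambda(p)$, $a_{p^2}=\tfrac12(\alpha_p^2+\beta_p^2)$, and $j=a_m/2=\lceil 200kJ_m\rceil$. This produces the factorial factor $j!$, the power $\big(2\sum_{p\in I_m}\tfrac{|\lambda(p)|^2}{p}+O(p^{-2})\big)^{j}$, and the Dirichlet-coefficient sum $\sum_{u}\tilde d(u)|g(u)|^2 u^{-1}$. The latter sum is $e^{O(k^2)}$ times an Euler product over $p\in I_m$ of $\big(1+O(k^2/p)\big)$ — this is $\leq\exp\!\big(O(k^2)\sum_{p\in I_m}p^{-1}\big)\leq\exp\!\big(O(k^2)\big)$ for $m\geq2$ and $\leq(\log\log y)^{O(k^2)}$ for $m=1$; similarly $\sum_{p\in I_m}|\lambda(p)|^2/p\ll 1$ (resp. $\ll\log\log y$) by \eqref{merten1} and the definition of the $y_m$. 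Writing $P_m:=\sum_{p\in I_m}\big(\tfrac2p+\tfrac3{p^2}\big)$ as in the proof of Proposition \ref{p4}, so $A_m=4P_m$ and $W_m\geq J_m/(100k)$, Stirling gives $j!\,(8P_m)^{j}W_m^{-a_m}\leq (Cj/W_m^2\cdot P_m)^{j}$ for a constant $C$; since $j\asymp kJ_m$ and $W_m^{2}\geq J_m^2/(100k)^2$, this is at most $\big(C'k^3P_m/J_m\big)^{j}$.

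The key inequality to verify is then $C'k^3 P_m/J_m\leq \tfrac12$, say, which is a slightly strengthened form of \eqref{estAJ}: combining \eqref{Abound} — i.e. $A_m\leq 40$ for $m\geq 2$ and $A_m\leq 12\log\log y$ for $m=1$ — with $J_1=(\log\log y)^{3/2}$ and $J_m\geq J_M\geq\exp(10^4k^2)\gg k^3$ shows $P_m/J_m$ is extremely small, so the bound holds with room to spare. Feeding this back, the whole expectation is bounded by $e^{O(k^4)}\cdot(\inf I^{(m)}_{n_m})^{-a_m/2}\cdot(\text{something summable})$; crucially, since $n_m\geq 1$ means $W_m\geq J_m/(100k)$ and the deterministic prefactor in $U_{m,l_2}$ is designed (exactly as in \cite[Lemma 7, Lemma 9]{Szab24}) to cancel the exponential growth $e^{4W_m}$ against the decay of $W_m^{-a_m}$, one is left with a factor that beats $(\inf I^{(m)}_{n_m}+1)^{-2}$ by the huge exponent $a_m\gg kJ_m\gg 1$. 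I expect the main obstacle to be bookkeeping the deterministic prefactors in the three cases of the definition of $U_{m,l_2}$ and checking that the gain $(Ck^3P_m/J_m)^{j}$ with $j\gg kJ_m$ genuinely dominates $e^{4W_m}$ in the regime $W_m\leq 100kJ_m$ — but this is precisely the computation carried out in the proof of \cite[Lemma 9]{Szab24}, and the $\lambda(p)$-weights only change constants, so a straightforward modification suffices.
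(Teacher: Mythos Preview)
Your first step contains a genuine error: the pointwise inequality $R_{m,l_1}(f)\leq \exp\!\big(2(k-1)\Re D_{m,l_1}(f)\big)$ is \emph{false} in general. Recall that $R_{m,l_1}(f)=\big(\sum_{j=0}^{J_m}\frac{(k-1)^j}{j!}x^j\big)^2$ with $x=\Re D_{m,l_1}(f)$. The summands $\frac{(k-1)^j}{j!}x^j$ are non-negative only when $x\ge 0$; for $x<0$ the truncated exponential can exceed $e^{(k-1)x}$ by an arbitrarily large factor (already at $J_m=0$ one has $1>e^{(k-1)x}$ for any $x<0$). Since $\Re D_{m,l_1}(f)$ is a genuine random variable symmetric about $0$, this regime cannot be ignored, and every subsequent step of your argument (the Euler-product expansion, the application of Lemma~\ref{evenmoment} to $|\sum_u g(u)f(u)u^{-1/2}|^2$) rests on it.

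The paper avoids this problem by a different decoupling: rather than a pointwise domination, it applies Cauchy--Schwarz,
\[
\big(\mathbb{E}\,R_{m,l_1}(f)U_{m,l_2}(f)\big)^2\le \mathbb{E}\,R_{m,l_1}(f)^2\cdot \mathbb{E}\,U_{m,l_2}(f)^2,
\]
and bounds $\mathbb{E}\,R_{m,l_1}(f)^2\le \exp(4k^2A_m)$ directly (as in \cite[Lemma~9]{Szab24}) by expanding the polynomial and estimating moments --- no comparison with $e^{2(k-1)x}$ is needed. Lemma~\ref{evenmoment} is then applied to $\mathbb{E}\,U_{m,l_2}(f)^2$ alone (with $c_n$ trivial), yielding \eqref{EUbound}. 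The endgame you sketch (checking $e^2a_mA_m/W^2\le e^{-1}$ via \eqref{Abound} and $J_m\ge \exp(10^4k^2)$) is indeed what the paper does, but only after this Cauchy--Schwarz separation. A secondary slip: for $m=1$ you write $\sum_u\tilde d(u)|g(u)|^2u^{-1}\le (\log\log y)^{O(k^2)}$, but $\sum_{p\in I_1}p^{-1}=\log\log y_1+O(1)\asymp \log\log y$, so your bound should read $(\log y)^{O(k^2)}$; this would still be absorbed by the super-exponential decay in $j\asymp kJ_1$, but it is worth getting right.
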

\begin{proof}
  The proof is a straightforward modification of the proof of \cite[Lemma 9]{Szab24}. As shown there, we have
\begin{align}
\label{Esquareupperbound}
\begin{split}
    \big(\mathbb{E}  R_{m,l_1}(f) U_{m,l_2}(f) \big)^2\leq & \mathbb{E} R_{m,l_1}(f)^2 \mathbb{E}  U_{m,l_2}(f)^2 
\leq \exp\big( 4k^2A_m\big)\mathbb{E}  U_{m,l_2}(f)^2,
\end{split}
\end{align} 
  where $A_m$ is defined in \eqref{KAdef}. Similar to what is shown in the proof of \cite[Lemma 9]{Szab24}, upon using Lemma \ref{evenmoment}, we see that 
\begin{align}
\label{EUbound}
\begin{split}
    \mathbb{E}  U_{m,l_2}(f)^2\leq \begin{cases}
        \Big(\frac{e^2a_m A_m}{W^2}\Big)^{a_m} \text{  if  } J_m/100k \leq W\leq 100kJ_m, \\
        \Big( \frac{10ka_m A_m}{J_mW} \Big)^{a_m/2} \text{  if  } W\geq 100kJ_m. 
    \end{cases} 
\end{split}
\end{align}
  where $W=\inf I^{(m)}_{n_m}$. \newline

  Using \eqref{Abound} and the estimations $a_1 \leq 500kJ_1$, $W\geq \frac{J_1}{100k}$, $J_1=(\log \log y)^{3/2}$, together with the estimations $W\geq J_m/100k$, $a_m\leq 500kJ_m$, we see that 
\begin{align*}
%%\label{KAdef}
\begin{split}
   \frac{e^2a_m A_m}{W^2} \leq \begin{cases}
        10^{10}k^3(\log \log y)^{-1/2}, & \text{  if  } m=1, \\
        2e^210^8k^3/J_m, & \text{  if  } m \geq 2. 
    \end{cases} 
\end{split}
\end{align*}
  Using $J_m \geq J_M \geq \exp(10^4k)$, we deduce from the above that for $y$ large enough, we have $\frac{e^2a_m A_m}{W^2}\leq e^{-1}$ for all $1 \leq m \leq M$.  Thus when $J_m/100k \leq W\leq 100kJ_m$, $\mathbb{E}U_{m,l_2}(f)^2\ll e^{-a_m}\leq e^{-W}$, and then as shown in the proof of \cite[Lemma 9]{Szab24}, we have $e^{4k^2A_m}e^{-W}\leq (W+1)^{-4}$. Similarly, we have $\mathbb{E}U_{m,l_2}(f)^2\ll W^{-100kJ_m}$ when $100kJ_m\leq W$ so that $e^{4k^2A_m}W^{-100kJ_m}\leq (W+1)^{-4}$. The assertion of the lemma now follows from \eqref{Esquareupperbound}, \eqref{EUbound} and the above discussion.  
\end{proof}

\begin{proof}[Proof of Proposition \ref{big_upper}]
     We deduce from \eqref{shortpolynomial} that $\prod_{m=1}^M R_{m,l_1}(\chi) U_{m,l_2}(\chi)$ is a Dirichlet polynomial of length not exceeding
\begin{align*}
%%\label{KAdef}
\begin{split}
     \prod_{m=1}^M y_m^{8J_m+2a_m}<q.  
 \end{split}
\end{align*}
  Note further that $R_{m,l_1}(\chi) U_{m,l_2}(\chi)$ is non-negative. It follows from these and the orthogonality relation of Dirichlet character sums that
\begin{align}
\label{productbound}
\begin{split}
    \frac{1}{\varphi(q)} \sum_{\chi\in \mathcal{X}(n_1,\ldots,n_M)}\prod_{m=1}^M R_{m,l_1}(\chi) U_{m,l_2}(\chi) \leq & \frac{1}{\varphi(q)} \sum_{\chi\in \mathcal{X}_q }\prod_{m=1}^M R_{m,l_1}(\chi) U_{m,l_2}(\chi) \\
=&  \mathbb{E} \prod_{m=1}^M R_{m,l_1}(f) U_{m,l_2}(f) = \prod_{m=1}^M \mathbb{E}  R_{m,l_1}(f) U_{m,l_2}(f), 
 \end{split}
\end{align}
  where the last equality emerges by noting the random quantities $\big(R_{m,l_1}(f) U_{m,l_2}(f)\big)_{1\leq m\leq M}$ are independent of each other. \newline

 We deduce from Lemmas \ref{zerocase} and \ref{nonzerocase} that
\begin{equation}
\label{faszom2}
     \mathbb{E} R_{m,l_1}(f)U_{m,l_2}(f) \leq  \begin{cases}
         (\mathbb{E} e^{2(k-1)\Re D_{m,l_1}(f)+2\Re D_{m,l_2}(f)}+e^{-J_m})(\inf I^{(m)}_{n_m}+1)^{-2}, & \text{  if  } n_m=0, \\
        (\inf I^{(m)}_{n_m}+1)^{-2}, & \text{  if } n_m>0. 
    \end{cases}
\end{equation}
  Here we note that when $n_m=0$, the factor $(\inf I^{(m)}_{n_m}+1)^{-2}=1$.  So multiplying by it does not alter anything. \newline

 We next derive from Lemma \ref{eulerproduct} that
\begin{equation}
\label{faszom1}
\begin{split}
     & \mathbb{E} e^{2(k-1)\Re D_{m,l_1}(f)+2\Re D_{m,l_2}(f)} \\
     = & \exp \Big( O\Big(k\sum_{y_{m-1}<p\leq y_m } \frac{1}{p^{3/2} } \Big) \Big) \mathbb{E}\prod_{y_{m-1}<p\leq y_m} \Big|1-\frac{\alpha_pf(p)}{p^{1/2+il_1/\log y} } \Big|^{-2(k-1)} \\
     & \hspace*{2cm} \times  \Big|1-\frac{\beta_pf(p)}{p^{1/2+il_1/\log y} } \Big|^{-2(k-1)} \Big| 1-\frac{\alpha_pf(p)}{p^{1/2+il_2/\log y}}\Big|^{-2} \Big| 1-\frac{\beta_pf(p)}{p^{1/2+il_2/\log y}}\Big|^{-2} \\
     =&  \exp \Big( \sum_{y_{m-1}<p\leq y_m} \frac{(k-1)^2\lambda^2(p)}{p}+\frac{\lambda^2(p)}{p}+\frac{2(k-1)\lambda^2(p)\cos \big( \frac{l_1-l_2}{\log y} \log p\big)}{p}+O\Big( \frac{k^3}{y_{m-1}^{1/2}} +k\sum_{y_{m-1}<p\leq y_m } \frac{1}{p^{3/2} } \Big)\Big). 
\end{split}
\end{equation}
 
   It therefore follows from \eqref{faszom2}, \eqref{faszom1} that
\begin{align}
\label{Expectationbound}
\begin{split}
\prod_{m=1}^M \mathbb{E} R_{m,l_1}(f)U_{m,l_2}(f)  \leq &  \prod_{m=1}^M(\mathbb{E} e^{2(k-1)\Re D_{m,l_1}(f)+2\Re D_{m,l_2}(f)}+e^{-J_m})\prod_{m=1}^M (\inf I^{(m)}_{n_m}+1)^{-2} \\
\leq & \prod_{m=1}^M(1+e^{-J_m})\prod_{m=1}^M\mathbb{E} e^{2(k-1)\Re D_{m,l_1}(f)+2\Re D_{m,l_2}(f)}\prod_{m=1}^M (\inf I^{(m)}_{n_m}+1)^{-2} \\
\ll & \exp \Big( \sum_{p\leq y_M} \frac{(k-1)^2\lambda^2(p)}{p}+\frac{\lambda^2(p)}{p}+\frac{2(k-1)\lambda^2(p)\cos \big( \frac{l_1-l_2}{\log y} \log p\big)}{p} \\
& \hspace*{2cm} +O\Big( \sum_{m=1}^M \Big( \frac{k^3}{y_{m-1}^{1/2}} +k\sum_{y_{m-1}<p\leq y_m } \frac{1}{p^{3/2}} \Big) \Big)\Big) \prod_{m=1}^M (\inf I^{(m)}_{n_m}+1)^{-2}.
\end{split}
\end{align}

    Note that
\begin{equation}
\label{summerrorbound}
    \sum_{m=1}^M \Big(\frac{k^3}{y_{m-1}^{1/2}} +k\sum_{y_{m-1}<p\leq y_m } \frac{1}{p^{3/2} }\Big)\ll e^{O(k^3) }.
\end{equation}

 Moreover, we deduce from \eqref{sumlambdapsquare} and Lemma \ref{RS3} that
\begin{equation}
\label{sumoverprimes}
    \exp \Big(\sum_{p\leq y_M} \frac{(k-1)^2\lambda^2(p)}{p}+\frac{\lambda^2(p)}{p}+\frac{2(k-1)\lambda^2(p)\cos \big( \frac{l_1-l_2}{\log y} \log p\big)}{p}\Big)\ll \frac{(\log y)^{k^2}}{|l_1-l_2|^{2(k-1)}+1},
\end{equation}
 The assertion of Proposition \ref{p2} now follows from \eqref{productbound}, \eqref{Expectationbound}--\eqref{sumoverprimes}. This completes the proof of the proposition. 
\end{proof}

\vspace*{.5cm}

\noindent{\bf Acknowledgments.}  P. G. is supported in part by NSFC grant 12471003 and L. Z. by the FRG Grant PS71536 at the University of New South Wales.

\bibliography{biblio}
\bibliographystyle{amsxport}

\end{document}